\crefname{enumi}{part}{parts}% Refer to items as "Part"
\Crefname{diagram}{Diagram}{Diagrams}
\numberwithin{equation}{section}
\newtheorem{thm}[equation]{Theorem} 
\newtheorem{lemma}[equation]{Lemma} 
\newtheorem{cor}[equation]{Corollary}
\newtheorem{example}[equation]{Example}
\newtheorem{remark}[equation]{Remark}
\newtheorem{definition}[equation]{Definition}
\DeclareRobustCommand\longtwoheadrightarrow{\relbar\joinrel\twoheadrightarrow}
\DeclareMathOperator{\Ima}{Im}
\DeclareMathOperator{\ima}{Im}
\DeclareMathOperator{\Ker}{Ker}
\DeclareMathOperator{\sgn}{sgn}
\newcommand{\pr}{{\rm{pr}}}%projection
\newcommand{\inc}{{\rm{in}}}%inclusion
\newcommand{\GL}{\text{GL}}
\newcommand{\igamma}{\gamma^{-1}}
\newcommand{\R}{\mathscr{R}}
\newcommand{\sd}{_{\DOT}}%adds a sub dot for a chain complex
\newcommand{\B}{{\bf B}} % unreduced bar
\newcommand{\BB}{\mathbb{B}} % reduced bar
\DeclareMathOperator{\AW}{AW}
\DeclareMathOperator{\EZ}{EZ}
\DeclareMathOperator{\BAW}{{\AW}}
\DeclareMathOperator{\BEZ}{{\EZ}}
\DeclareMathOperator{\AWt}{\AW^{\tau}}
\DeclareMathOperator{\EZt}{{\EZ^{\tau}}}
\DeclareMathOperator{\AWtB}{ \AW^{\tau}_{\!\B}}
\DeclareMathOperator{\EZtB}{  \EZ^{\tau}_{ \B} }
\DeclareMathOperator{\AWtBB}{ \AW^{\tau}_{\!\BB}}
\DeclareMathOperator{\EZtBB}{  \EZ^{\tau}_{ \BB} }
\newcommand*\xbar[1]{%
  \hbox{%
    \vbox{%
      \hrule height 0.5pt % The actual bar
      \kern0.2ex%         % Distance between bar and symbol
      \hbox{%
        \kern-0.0em%      % Shortening on the left side
         \ensuremath{#1 }% %making room on right side
        \kern-.1em%      % Shortening on the right side
              }%
    }\hphantom{.}% % making blank space on right side
  }%
}
\newcommand{\itau}{{\tau^{-1}}}% notation for inverse of \tau map
\newcommand{\s}{\sigma}
\newcommand{\ttp}{R \ot_{\tau} S}
\newcommand{\ttpp}{(\ttp)}%twisted tensor product parenthesis
\newcommand{\ttps}{R\,\ot_{\tau} S}%twisted tensor product subscript
\newcommand{\hooklongrightarrow}{\lhook\joinrel\longrightarrow} %long hook arrows embeds
\newcommand{\DOT}{\setlength{\unitlength}{1.2pt}\begin{picture}(2.5,2) 
               (1,1)\put(2.5,2.5){\circle*{2}}\end{picture}}
\newcommand{\coh}{{\rm H}}
\newcommand{\N}{{\mathbb N}}
\renewcommand{\ker}{\mbox{\rm Ker\,}}
\newcommand{\ot}{\otimes}
\newcommand{\ott}{\otimes_{\tau}}
\newcommand{\Ho}{H}% symbol for Hopf algebra
\newcommand{\CC}{\mathbb{C}}
\newcommand{\del}{\partial}
\newcommand{\Wedge}{\textstyle\bigwedge}
\newcommand{\Ydiff}{\partial}
\begin{document}
% \nocite{*} % mark the references which are not cited

\begin{abstract} 
Alexander-Whitney 
and Eilenberg-Zilber maps traditionally convert
between the
tensor product of standard resolutions and the standard
resolution of a tensor product of algebras. 
We examine Alexander-Whitney and Eilenberg-Zilber maps for twisted
tensor products, which include
skew group algebras, smash products of Hopf algebras,
Ore extensions, and universal enveloping algebras.
These maps convert between the twist of standard resolutions
and the standard resolution of a twist.  We extend these 
to chain maps to and from twists of other resolutions.
This allows one to transfer
homological information between various resolutions of algebras
and to expedite results on the 
deformation theory of twisted tensor product algebras.
\end{abstract}

\title[Twisted tensor products:
Alexander-Whitney and Eilenberg-Zilber maps]
      {Twisted tensor products:\\  Alexander-Whitney and Eilenberg-Zilber 
      maps}

 \date{November 4, 2024} 

\subjclass[2020]{16E40, 16E05, 16S40, 16T05, 16S37}

\thanks{Key words: chain maps, bar resolution, smash product algebras,
cohomology, Hopf algebras, Koszul algebras, 
  Chevalley-Eilenberg resolution} 

\author{A.\ V.\ Shepler}
\address{Department of Mathematics, University of North Texas,
Denton, TX 76203, USA}
\email{ashepler@unt.edu}
\author{S.\  Witherspoon}
\address{Department of Mathematics\\Texas A\&M University\\
College Station, TX 77843, USA}\email{sjw@tamu.edu}

\dedicatory{Dedicated to J.\ Peter May on the occasion of his 85th birthday.}
\maketitle

%%%%%%%%%%%%%%%%%%%%%%%%%%%%%%%%%%%%%%%%%%%%%%%%%%%%%%%%%%%%
%%%%%%%%%%%%%%%%%%%%%%%%%%%%%%%%%%%%%%%%%%%%%%%%%%%%%%%%%%%%
%%%%%%%%%%%%%%%%%%%%%%%%%%%%%%%%%%%%%%%%%%%%%%%%%%%%%%%%%%%%
\section{Introduction}
Various algebraic structures in homology and cohomology are defined
in terms of the bar (i.e., standard) resolution of an algebra, but the
infinite bar resolution can be taxing for direct computation.  The terms grow in
complexity as the homological degree increases, often limiting extraction
of concrete information. One seeks a more convenient
resolution to study homology or cohomology, but manageable resolutions can come
at a price:
homological constructions like the cup product and graded Lie bracket
often do not relocate easily to alternate resolutions.
Explicit chain maps to transfer information
between resolutions are often lacking,
which limits fruitful illumination provided by homological methods.

Traditionally,
Alexander-Whitney 
and Eilenberg-Zilber maps convert
between the chain complex
of a product of spaces $R\times S$ and the
tensor product of the chain complexes of the individual spaces $R$ and $S$, providing 
an isomorphism 
when homology groups are free, $\coh(R\times S)\cong
\coh(R)\otimes \coh(S)$.
Composing the maps in one order gives the identity on chains,
$\AW\circ\EZ = 1$,
while composing in the reverse order
gives a map only homotopic to the identity, $\EZ\circ\AW \cong 1$,
for
$$
\begin{aligned}
  &\AW:& \text{Chains}(R\times S)&\longrightarrow \text{Chains}(R)\ot \text{Chains}(S), 
  \\
  &\EZ:& \text{Chains}(R)\otimes \text{Chains}(S) &\longrightarrow \text{Chains}(R\times S) 
  \, . 
\end{aligned}
$$
The traditional Eilenberg-Zilber map
(also called the ``shuffle product'' or ``simplicial cross product'')
was originally
defined by Eilenberg and Mac Lane \cite[$\S$5]{EilenbergMacLane}
to give the isomorphism in
the Eilenberg-Zilber Theorem~\cite{EZ53};
see also \cite{EZ50}, \cite[\S29.7]{May}, 
\cite[Exercise~8.6.5]{Weibel},
\cite[\S3.B, p.~277]{Hatcher}, and~\cite[Chapter~5.3]{AguiarMahajan},
as well as~\cite[Chapter VIII, Theorems 8.1, 8.5, 8.8]{MacLane} 
for simplicial modules.

A version of this theory for
the tensor product of algebras $R$ and $S$ is described
in Mac Lane~\cite[Chapter X, \S7 and \S12]{MacLane}, Chevalley and
Eilenberg~\cite[Chapter XI, \S6]{CE}, May~\cite{May},
and Loday~\cite[\S4.2]{Loday}.
Algebraic versions of the above maps likewise provide a
strong deformation retraction of the reduced bar
resolution 
of the tensor product 
onto the tensor product of the reduced bar resolutions:
$$
\begin{aligned} 
  &\AW:& \BB_{R\otimes S}&\longrightarrow \BB_R\ot \BB_S, 
  \\
  &\EZ:& \BB_R\otimes \BB_S &\longrightarrow \BB_{R\otimes S} 
  \, . 
\end{aligned}
$$
Here, $\BB_A$ denotes the reduced (normalized) bar resolution  
of an algebra $A$, so
$(\BB_A)_n=A\ot \bar{A}^{\ot n}\ot A$  
for $\bar A=A/k\cdot 1_A$.  
Le and Zhou~\cite{LZ} described the Gerstenhaber structure on
Hochschild cohomology using these maps.

We consider the analogous theory and applications for {\em twisted tensor products},
which include many noncommutative algebras of interest
in representation theory and combinatorics.
These are associative algebras
that are isomorphic as vector spaces to
the tensor product
of two subalgebras.
This large class of noncommutative algebras
includes quantum/skew versions of polynomial rings, shift algebras, skew group algebras,
smash products with Hopf algebras, Ore extensions, some Sklyanin
algebras, universal enveloping algebras (of supersolvable Lie
algebras), other quantum groups with triangular decompositions,
and braided tensor products defined by R-matrices.

We show how to use
Alexander-Whitney and Eilenberg-Zilber maps
for twisted tensor products $R\ott S$
to transfer between resolutions carrying homological information.
These are chain maps that convert between
the reduced bar resolution $\BB_{R\ott S}$ on one hand
and a twisted product resolution $\BB_{R}\ott\,\BB_S $
on the other hand, see Guccione and Guccione~\cite{GG}
and \cref{AW-EZforreducedbar}:
$$
\begin{aligned} 
  &\AW^{\tau}:& \BB_{R\ott S}&\longrightarrow \BB_R\ott \BB_S, 
  \\
  &\EZ^{\tau}:& \BB_R\ott \BB_S &\longrightarrow \BB_{R\ott S} 
  \, . 
\end{aligned}
$$
%$$
%\entrymodifiers={+!!<0pt,\fontdimen22\textfont2>}
%\xymatrixcolsep{6ex}
%\xymatrixrowsep{8ex}
%\xymatrix{
%%top floor
%(\BB_{R\ot_{\tau} S})_{n}   
%\ar[rr]^{d_{n}}   
%\ar[d]_{\BAW_{n}^{\tau}}
% &  & 
%(\BB_{R\ot_{\tau} S})_{n-1} 
%\ar@<0ex>[d]_{\BAW_{n-1}^\tau }
%& & 
%\\
%%bottom floor
%(\BB_{R} \otimes_{\tau} \BB_S )_{n} 
% \ar[rr]^{\del_{n}}  
% \ar[u]<-2ex>_{ \ \BEZ_{n}^\tau}  
%  & & 
%(\BB_{R} \otimes_{\tau} \BB_S )_{n-1} 
%\ar@<-2ex>[u]_{\ \BEZ_{n-1}^\tau } 
% & &
%}
%$$
See also~\cite{GNW}~and~\cite{SW-grouptwistedAWEZ} 
for these maps in special cases of twisted
tensor products involving group actions or  bicharacters of grading groups.

In practice, one often finds compatible resolutions $C\sd$ and $D\sd$ of
the two individual algebras $R$ and $S$
which are easier to work with than bar resolutions
and allow for explicit computations.
For example, one might desire a periodic resolution for
a cyclic group (see \cite{LawsonShepler}, for example),
the Chevalley-Eilenberg
resolution for a universal enveloping algebra (e.g., see \cite{Weibel}),
or a Koszul resolution for a Koszul algebra.
We develop tools
over arbitrary fields
(see \cref{barconversion} and \cref{conversionspecial}, for example)
for converting 
from the bar resolution of the twisted tensor product $R\ott S$
to a twist $C\sd\ott D\sd$ of individual resolutions for $R$ and $S$,
resolutions that potentially lend themselves to a 
concrete determination of homology or cohomology.

As an application, we consider Hopf algebras acting on Koszul
algebras.
Deformations of corresponding smash product algebras
generalize the graded affine Hecke algebras of Lusztig
\cite{Lusztig88, Lusztig89} and  the symplectic reflection algebras
of Etingof and Ginzburg~\cite{EG}
(see also~\cite{CBH}, 
\cite{Drinfeld}, 
\cite{Gordon}).  
These arise from groups acting on polynomial rings;
more general Hopf algebra actions appear for example
in~\cite{EGG, GanKhare, KhareTikaradze, WW}.
In a forthcoming article~\cite{SW-DeformationTheoryHopfActions},  
we consider deformations of smash product algebras $R\# H$
for Hopf algebras $H$ acting on Koszul algebras $R$. 
We construct ``Hopf-Koszul Hecke algebras"
using conversion chain maps established here
in
\cref{LastThm}
to transfer 
between resolutions even when the Hopf algebra is not semisimple.

We consider finite groups acting on polynomial rings as a particular
running example
throughout. See
\cref{GroupUnshuffle,GroupExampleAWEZmaps,SymmetricAlgebraGroup,ChainMapsGroupActingOnPolyRing}
and the special case at the end of~\cref{sec:conversion}.

Note that authors sometimes use the term 
``twisted tensor product" when 
twisting 
by a cocycle or cochain 
(see~\cite{AlvarezArmarioFrauReal,Brown1959,Brown1964} for example).
This notion of twisting is generally different from that 
considered here.  
%Other authors work in the special case where the 
%twisting map is defined via a bicharacter on grading groups
%(for example, see~\cite{BerghOppermann2008}); corresponding 
%versions of the Alexander-Whitney and Eilenberg-Zilber maps 
%are given in~\cite{GNW}. 

%%%%%%%%%%%%%%%%%%%%%%%%%%%%%%%%%%%%%%%%%
\subsection*{Outline}
In \cref{sec:NW1}, we recall basic notations of twisted tensor
products and review
the construction of the twisted product resolution.
We specialize to twists of bar resolutions
in \cref{twistedproductresolutionbars}.
We construct twisted Alexander-Whitney and Eilenberg-Zilber maps in
\cref{sec:AW-EZ} and recall the special case of finite groups acting
on polynomial rings.
In  \cref{sec:chainmaps,sec:chainmapsonreduced},
for completeness, we verify these are chain maps
with composition
$\EZ^{\tau} \circ \AW^{\tau} = 1$, the identity map,
on the reduced bar resolutions.
We show in \cref{sec:compatiblechainmaps,sec:conversion} how to use these maps to convert
between other resolutions for twisted tensor products and again 
illustrate with  the case
of groups acting on polynomial rings.
We turn to the actions of Hopf algebras generally 
in Sections~\ref{sec:smashproducts} and~\ref{sec:Hopf-Koszul},
showing how to convert between
resolutions for smash product algebras,
with particular focus on
actions on Koszul algebras and their Koszul resolutions.

%%%%%%%%%%%%%%%%%%%%%%%%%%%%%%%%%%%%%%%%%
\subsection*{Notation and Conventions}
We let $k$ be an arbitrary field with char$(k)\neq 2$ throughout.
We take all tensor products over $k$ unless otherwise indicated,
$\otimes=\otimes_k$.  The identity element of $k$
is identified with the identity element of each algebra $A$ we encounter,
although we often write $k 1_A$ instead of $k$ for clarity. 
We assume all algebras are associative $k$-algebras
unless otherwise indicated.
For any algebra $A$, we write $m_A$ for the multiplication map 
$A\ot A\rightarrow A$ and 
view any $A$-bimodule as
a left $A^e$-module for $A^e=A\ot A^{\text{op}}$.
We also follow
the standard {\em Koszul sign convention}:
For graded vector spaces $P, P', Q, Q'$ over $k$ and graded maps
$f: P\rightarrow Q$ and $f': P'\rightarrow Q'$,
the map $$f\ot f': P\ot P'\rightarrow Q\ot Q'$$ is defined
for homogeneous $p$ in $P$ and $p'$ in $P'$ by
\begin{equation}
  \label{signconvention}
(f\ot f')(p\ot p')= (-1)^{(\deg p)(\deg f')}\big(f(p)\ot f'(p')\big)
\, .
%\quad
%\text{ \em (Koszul sign convention)}.
\end{equation}

%%%%%%%%%%%%%%%%%%%%%%%%%%%%%%%%%%%%%%%%%%%%%%%%%%%%%%%%%%%%%%%%
%%%%%%%%%%%%%%%%%%%%%%%%%%%%%%%%%%%%%%%%%%%%%%%%%%%%%%%%%%%%%%%%
%%%%%%%%%%%%%%%%%%%%%%%%%%%%%%%%%%%%%%%%%%%%%%%%%%%%%%%%%%%%%%%%
\section{Twisted product resolutions}\label{sec:NW1}

We explain how to twist together resolutions of individual algebras
to give new resolutions of twisted tensor product algebras.

\subsection*{Twisted tensor products}
A {\em twisted tensor product} is an algebra
generated by two subalgebras $R$ and $S$ 
that is isomorphic as
a vector space to $R\ot S$ via the canonical map $r\ot s \mapsto rs$.
We may construct a 
{\em twisted tensor product} $R\ott S$ 
from a pair of $k$-algebras $R$ and $S$
as the $k$-vector space $R\ot S$
with  associative multiplication
determined by a twisting map $\tau: S \ot R \to R \ot S$
that commutes with the multiplication in $R$ and $S$.
Specifically, 
a bijective $k$-linear  map $$\tau: S \ot R \longrightarrow R \ot S$$
is a {\em twisting map}
when
$
\tau(1_S \ot r) = r\ot 1_S  \ \mbox{ and } \ 
\tau(s \ot 1_R) = 1_R \ot s 
$
for all $r\in R$ and $s\in S$, and
there is an equality of compositions
mapping $S\ot S\ot R\ot R$ to $R\ot S$:
\begin{equation}\label{eqn:taumm}
\tau \ (m^{}_{S} \ot m^{}_{R}) = 
(m^{}_{R} \ot m^{}_{S}) \ (1 \ot \tau \ot 1) 
\ (\tau \otimes \tau) \ (1 \ot \tau \ot 1)
\, .
\end{equation}
The multiplication on $R\ott S$ is given by
$(m^{}_{R}\ot m^{}_{S}) (1_R\ot \tau\ot 
1_S)$;
see \cite[Proposition/Definition~2.3]{CSV} for details.

Examples of twisted tensor products include
skew group algebras, smash product algebras,
Ore extensions (including universal enveloping algebras
of finite dimensional supersolvable Lie algebras), braided
products arising from $R$-matrices, and $2$-cocycle twists of Hopf algebras. 
See Conner and Goetz~\cite{ConnerGoetz} for additional examples.

\vspace{2ex}

\begin{remark}\label{TwistingDefDiagram}
  {\em
    By definition, a twisting map satisfies this commutative diagram:
    \vspace{1ex}
\begin{footnotesize}
\begin{equation*}
\entrymodifiers={+!!<0pt,\fontdimen22\textfont2>}
\xymatrixcolsep{9ex}
\xymatrixrowsep{7ex}
\xymatrix{
S\ot S \ot R\ot  R
\ar[dr]_{\ m^{}_{S} \ot m^{}_{R}\ }
\ar[r]^{\ 1\ot\tau\ot 1 \ }
%R \ot S 
&S\ot R\ot S \ot R
\ar[r]^{\tau\ot\tau} 
& R\ot S \ot R \ot S
\ar[r]^{1\ot \tau\ot 1}
&
R\ot R \ot S \ot S
\ar[dl]^{\ m^{}_{R}\ot m^{}_{S}}
&
\\
& S\ot R 
\ar[r]_{\ \tau\  }
&
R\ot S
&&\\
.
}
\end{equation*}
\end{footnotesize}%
  }
\end{remark}

\subsection*{Graded twisted products}
For $\N$-graded algebras $R$ and $S$, we take the standard $\N$-grading 
on the vector spaces $R\ot S$ and $S\ot R$.
The twisted product $\ttp$ is then $\N$-graded
when the twisting map $\tau$ is graded,
and we say $\tau$
is {\em strongly graded} when
$\tau(S_j\ot R_i)\subset R_i\ot S_j$ for all $i,j$
(see Conner and Goetz~\cite{ConnerGoetz}). 
(Note that some authors leave off the adjective {\em strongly}.)

%%%%%%%%%%%%%%%%%%%%%%%%%
%%%%%%%%%%%%%%%%%%%%%%%%%
%%%%%%%%%%%%%%%%%%%%%%%%%
\subsection*{Resolutions compatible with twisting}
We fix algebras $R$ and $S$
and twisting map $\tau:S\ot R\rightarrow R\ot S$
and consider bimodule resolutions
$$
\begin{aligned}
%\quad 
C\sd:\quad \dots \longrightarrow & \ \, C_2 \longrightarrow C_1 \longrightarrow C_0 \longrightarrow 0 
\ \ \text{ of } R, \quad \text{ and}\\
%\quad 
D\sd:\quad \dots \longrightarrow & \ D_2 \longrightarrow D_1 \longrightarrow D_0 \longrightarrow 0 
\ \ \text{ of } S \text{,}
\end{aligned}
$$
with differentials $\partial^{}_C$ and $\partial^{}_D$, respectively.
Let $\rho_{_C}:R\ot C\sd \ot R\rightarrow C\sd$
and $\rho_{_D}:S\ot D\sd\ot S\rightarrow D\sd$
record the bimodule structures.
Recall that we say a chain map $\psi$ between resolutions $P\sd$ and $P'\sd$ of  
an algebra $A$ {\em lifts the identity map} on $A$ 
when $\psi$ extends to a chain map of the complexes
augmented
by $A$ in degree $-1$ with $\psi_{_{-1}}=1_A$.
We are interested in the case when resolutions $C\sd$ and $D\sd$ are compatible
with twisting (see~\cite[Definition~2.17]{SW-twisted}):

%%%%%%%%%%%%%%%%%%%%%%%%%%%%%%%%%%%%%%%%%%%%%%
\begin{definition}\label{CompatibleResolutionsDef}
  {\em
    An $R$-bimodule resolution
        $C\sd$ of $R$
    is  {\em compatible} with the twisting map
    $\tau:S\ot R\rightarrow R \ot S$
 when there is a bijective chain map
$$
\tau_{_C}: S\ot C\sd \longrightarrow C\sd \ot S
$$
lifting $\tau$
that commutes with the bimodule structure 
of $C\sd$ and multiplication in $S$.
Likewise, we say an $S$-bimodule resolution $D\sd$ of $S$
is {\em compatible with $\tau$} 
when it is compatible with
$\tau^{-1}:R\ot S \rightarrow S\ot R$,
and we abbreviate (when there is no risk of confusion)
the chain map by
$$\tau^{}_{_D}=((\tau^{-1})_{_D})^{-1}:
D\sd\ot R\longrightarrow R\ot D\sd\ . $$
%for the bijective chain map 
%lifting $\tau$ that commutes with the bimodule structure 
%of $D$ and multiplication in $S$.
}
\end{definition} 

Observe that the right-left distinction in this definition
is merely cosmetic: For $R=S$ and $C\sd=D\sd$,
a bijective chain map $R\ot C\sd\rightarrow C\sd \ot R$
lifting $\tau$ commutes with the bimodule structure
of $C\sd$ and multiplication in $R$ precisely when the inverse chain map
$ C\sd \ot R \rightarrow R\ot C\sd$ does.

\begin{remark}{
\label{CompatibleResolution}
\em
Specifically, the $R$-bimodule chain complex $C\sd$ of $R$
is  {\em compatible} with the twisting map $\tau: S\ot R\rightarrow R\ot S$
precisely when 
there are bijective $k$-linear maps $\tau^{}_{_{C_i}}: S\ot C_i \rightarrow C_i\ot S$
for which the following diagrams commute:
\begin{small}
$$
\entrymodifiers={+!!<0pt,\fontdimen22\textfont2>}
\xymatrixcolsep{8ex}
\xymatrixrowsep{5ex}
\xymatrix{ 
\cdots \ar[r] & 
{S} \ot {C_1}
\ar[r]^{1\ot \del^{}_{C}}
\ar[d]_{\tau^{}_{_{C_1}}}
&  
{S} \ot {C_0}
\ar[r]^{1\ot \del^{}_C}
  \ar[d]_{\tau^{}_{_{C_0}}}
  & S\ot R
  \ar[r]
 \ar[d]_{\tau} 
&
  0
  \\
  \cdots \ar[r] 
&
{C_1} \ot {S}   \ar[r]_{\del^{}_C\ot 1} 
&
{C_0} \ot {S}
  \ar[r]_{\del^{}_C\ot 1} 
   & R \ot {S}
  \ar[r]
    & 0
}
$$
\end{small}
and
\begin{small}
\begin{equation*}
\entrymodifiers={+!!<0pt,\fontdimen22\textfont2>}
\xymatrixcolsep{2ex}
\xymatrixrowsep{5ex}
\xymatrix{
&&
S\ot C\sd\ot S 
\ar[drr]^{\ \tau_{_C}\ot 1}
&&
\\
S\ot S\ot C\sd
\ar[urr]^{\ 1\ot \tau_{_C}\ \ }
\ar[dr]_{\ m^{}_S\ot 1\ \ }
&&&&  
C\sd \ot S \ot S
\ar[dl]^{\ \ 1\ot m^{}_S}
\\ 
&S\ot C\sd
\ar[rr]_{\tau_{_C}} 
&&
C\sd\ot S
&
\\
S\ot R\ot C\sd\ot R
\ar[dr]_{\tau\ot 1\ot 1\ \ \ }
\ar[ur]^{1\ot\rho_{_C}\ \ \ }
&& &&
R\ot C\sd\ot R\ot S
\ar[ul]_{\ \ \ \rho_{_{C}}\ot 1}
\\
&
R\ot S\ot C\sd\ot R
\ar[rr]_{\ 1\ot\, \tau_{_C}\ot 1\ }
&&
R\ot C\sd\ot S\ot R
\ar[ur]_{\ \ \ 1\ot 1\ot \tau}
&
.
}
\end{equation*}
\end{small}%
A similar diagram captures compatibility of an $S$-bimodule resolution $D\sd$ of $S$, see \cite{SW-twisted}.
  }% end \em
\end{remark}

%%%%%%%%%%%%%%%%%%%%%%%%%%%%%%%%%%%%%%%%%%%%%%%%%%%%%%%%%%%%%%%%%%%
%%%%%%%%%%%%%%%%%%%%%%%%%%%%%%%%%%%%%%%%%%%%%%%%%%%%%%%%%%%%%%%%%%%
%%%%%%%%%%%%%%%%%%%%%%%%%%%%%%%%%%%%%%%%%%%%%%%%%%%%%%%%%%%%%%%%%%%

%%%%%%%%%%%%%%%%%%%%%%%%%%%%%%%%%%%%%%%%%%%%%%%5
\subsection*{Twisted product resolution as a total complex}
For bimodule resolutions $C\sd$ and $D\sd$ of $R$ and $S$,
respectively,
compatible with the twisting map $\tau:S\ot  
R\rightarrow R\ot S$,  
the {\em twisted product resolution} $X\sd=C\sd \ot_{\tau} D\sd$ 
of the algebra $\ttp$ is
the total complex of $C\sd \ot D\sd$
with a particular bimodule action of $\ttp$:
As a complex of vector spaces,
\begin{equation}\label{cx-X}
X_n = \bigoplus_{i+j=n} C_i \ot D_j
\, \qquad\text{ for } n\geq 0 \, 
\end{equation}
with differential
$\del^{}_X=\del^{}_C\ot 1^{}_D + 1^{}_C\ot \del^{}_D$
(see (\ref{signconvention})):
\begin{small}
$$
\entrymodifiers={+!!<0pt,\fontdimen22\textfont2>}
\xymatrixcolsep{6ex}
\xymatrixrowsep{4ex}
\xymatrix{
\vdots \ar[d] & &
\vdots \ar[d] & &
\vdots \ar[d] \\
{{C_0} \ot {D_2} }\ar[d]^{1\ot \del^{}_D} & &
{{C_1}\ot {D_2}} \ar[ll]_{\del^{}_C\ot 1} \ar[d]^{1\ot \del^{}_D}  & &
{{C_2}\ot {D_2}} \ar[ll]_{\del^{}_C\ot 1}
\ar[d]^{1\ot \del^{}_D}  &
{\cdots}
\ar[l]  \\
   {C_0} \ot {D_1} \ar[d]^{1\ot \del^{}_D}
   & &
   {C_1} \ot {D_1} \ar[ll]_{\del^{}_C\ot 1}
   \ar[d]^{1\ot \del^{}_D} & &
{C_2} \ot {D_1} \ar[ll]_{\del^{}_C\ot 1} \ar[d]^{1\ot \del^{}_D} &
{\cdots}
\ar[l]
\rule{0ex}{6ex}
\\
{C_0} \ot  {D_0}  & &
{C_1} \ot {D_0} \ar[ll]_{\del^{}_C\ot 1} & &
{C_2}\ot {D_0} \ar[ll]_{\del^{}_C\ot 1} &
\cdots\ar[l]
}
$$
\end{small}

%%%%%%%%%%%%%%%%%%%%%%%%%%%%%%%%%%%%%%%%%%%%%%%5
\subsection*{Bimodule structure on the total complex}
We imbue each $X_n$ with the structure
of an $(\ttp)$-bimodule using compatibility maps:
\vspace{2ex}
  \begin{small} 
    \begin{equation}
    \label[diagram]{bimodstructure}
  \begin{tikzcd}[row sep=7ex, column sep=8ex, text height=2ex, text depth=.25ex, ampersand replacement=\&]
  %Weird ampersand option is so can cut and paste into Beamer and use code
%    \hphantom{A}%left most curved down arrow 
%  \hphantom{A}% space for large curved left up arrow & qq
   \&[-3em]
   (R \ott  S) \ot C_i \ot  D_j\ot  (R \ott  S)
    \arrow[swap, bend right=35, dashed]{d}{\text{bimod structure}\ }
   %  \arrow[swap, dashed]{d}{}
   \arrow{rr}{ ^{ 1\ot\, \tau^{}_{_C}\ot\, \tau^{}_{_D} \ot 1} }
   \&\&
   R \ot  C_i \ot  S \ot  R \ot  D_j \ot S 
   \arrow[]{d}{1\ot 1\ot\, \tau\, \ot 1\ot 1 }
   \\
\&
    C_i\ot D_j 
    \&\&
         R \ot  C_i \ot R \ot  S \ot  D_j \ot S \, ,
 \arrow{ll}{ \rho_{_C}\ot\, \rho_{_D}}
    \end{tikzcd}
  \end{equation}
\end{small}%
\vspace{2ex}
i.e.,
with left-right $(R\ott S)$-action given by
the composition of $k$-vector space maps
$$ 
(\rho_{_C}\ot\, \rho_{_D})\ 
(1\ot 1\ot \tau \ot 1\ot 1 )
\ 
(1\ot\, \tau^{}_{_C}\ot\, \tau^{}_{_D}\, \ot 1)\, .
$$

%%%%%%%%%%%%%%%%%%%%%%%%%%%%%%%%%%%%%%%%%%%%%%%%%%%%%%
\subsection*{Twisted product complex as a resolution}
The total complex
$X\sd$
with this $\ttpp$-bimodule structure is a resolution of $\ttp$:
The complex
\begin{equation}\label{twistedproductresolution}
\cdots\xrightarrow{ \hphantom{xxxxx}}
X_2\xrightarrow{ \hphantom{xxxxx}}
X_1 \xrightarrow{ \hphantom{xxxxx}}
X_0 \xrightarrow{ \hphantom{xxxxx}}
\ttp \longrightarrow 0\, 
\end{equation} 
is an exact 
sequence of $(\ttp)$-bimodules by 
\cite[\S3]{SW-twisted} (also see ~\cite[\S4]{quad}).
Consequently, if the $(\ttp)$-bimodules $X_n$ are all projective as 
$(\ttp)^e$-modules, then $X\sd$ is a projective resolution of $\ttp$
(see~\cite{SW-twisted}).

\begin{remark}\label{examples}{\em 
  The twisted product complex $X\sd$ gives a convenient projective
  resolution of $R\ott S$ in many common settings.
  For example,
 the Koszul resolution $K\sd$ of a graded Koszul algebra $R$ and
  the bar (or reduced bar) resolution $\B_S$
  of  a graded algebra $S$ (see \cref{twistedproductresolutionbars})
are both compatible with a strongly graded twisting map $\tau: S\ot
R\rightarrow R\ot S$,
and they give a free bimodule 
resolution $K\sd\ott \B_S$
of $R\ott S$.  See~\cite[Proposition 2.20(ii)]{SW-twisted} and
\cref{BarCompatible}.
This setting includes the case of a Hopf algebra $H$ acting on graded Koszul algebra;
also see \cref{BarIsComoduleRes}
and (\ref{KoszulIsCompatible}) below.
In particular, for finite groups acting on polynomial rings,
see
\cref{GroupUnshuffle,SymmetricAlgebraGroup,ChainMapsGroupActingOnPolyRing}
and the end of~\cref{sec:conversion}.

Another example arises from iterated Ore extensions
(see \cite{BancroftShepler} and \cite{SW-twisted})
with relations that set quantum/skew commutators to lower order terms.
The twisted product resolution construction in this setting
generalizes the Cartan-Eilenberg complex
(resolving 
universal enveloping algebras as bimodules)
for finite-dimensional 
Lie algebras
solvable over $\CC$ 
(or supersolvable over $k$ when $k$ is not algebraically
closed or $\text{char}\ k\neq 0$).
It also gives standard resolutions for Weyl algebras, the
quantum Koszul complex of Wambst~\cite{Wambst1993},  and resolutions
of quantum Schubert cell algebras and some
Sridharan enveloping algebras, for example. 
}
\end{remark}

%%%%%%%%%%%%%%%%%%%%%%%%%%%%%%%%%%%%%%%%%
%%%%%%%%%%%%%%%%%%%%%%%%%%%%%%%%%%%%%%%%%
%%%%%%%%%%%%%%%%%%%%%%%%%%%%%%%%%%%%%%%%%
\section{Twisting bar resolutions}
\label{twistedproductresolutionbars}
We consider the twisted product resolution of 
bar complexes here for a pair of algebras $R$ and $S$
with twisting map $\tau:S\ot R\rightarrow R\ot S$
before turning to general resolutions later.

%%%%%%%%%%%%%%%%%%%%%%%
\subsection*{Bar resolutions}
The {\em bar resolution} $\B_A = (\B_A)\sd$
and {\em reduced} (i.e., {\em normalized}) {\em bar resolution} $\BB_A = (\BB_A)\sd$
of $A$ are respectively given by
$$
\begin{aligned}
  (\B_A)\sd\ :\ \ 
  & \cdots \longrightarrow (\B_A)_2 \longrightarrow (\B_A)_1 \longrightarrow A\ot
  A
  \xrightarrow{m_{_A}} 
A \longrightarrow 0 \qquad && \text{ (bar) }
\\
(\BB_A)\sd\ :\ \ 
&\cdots \longrightarrow (\BB_A)_2 \longrightarrow \, (\BB_A)_1 \longrightarrow A\ot A
\xrightarrow{m_{_A}} 
A \longrightarrow 0
&& \text{ (reduced bar) }
\end{aligned}
$$
 with 
$$
\begin{aligned}
  (\B_A)_n = A \ot A^{\, \ot n}\ot A\,
  \qquad\text{ and }\qquad
  (\BB_A)_n = A \ot \bar{A}^{\, \ot n}\ot A\,
  \qquad\text{ for } n> 0
\end{aligned}
$$
for $\bar{A}=A/(k\cdot 1_A)$
with $(\B_A)_0=(\BB_A)_0=A\ot A$
and differential defined by 
\begin{equation}\label{eqn:bar-diff}
   d_n(a_0\ot\cdots \ot a_{n+1})
   = \sum_{i=0}^{n} (-1)^i  a_0\ot\cdots\ot a_{i-1}\ot a_ia_{i+1}\ot a_{i+2}\ot \cdots
   \ot a_{n+1} 
\end{equation}
on both $\B_A$ and $\BB_A$.
We identify $\bar{A}$ with a fixed vector space complement
to $k1_A$ in $A$ by choosing a section to the projection map
$\pr^{}_{\bar A}: A\twoheadrightarrow A/ k 1_A$
so that 
\begin{equation}\label{eqn:bar-id}
  A=\bar{A}\oplus k1_A
\end{equation}
and the multiplication in $\bar{A}$ is given by the multiplication
in $A$ followed by projection to $\bar{A}$.
With this choice, we identify each bimodule component 
$(\BB_A)_n$ with a vector space direct summand 
of $(\B_A)_n$ via the  
inclusion map $\inc^{}_{\BB_A}: (\BB_A)_n \hookrightarrow (\B_A)_n$,
so that
\begin{equation}\label{DifferentialReduced}
      d^{}_{{\BB_A}} =
    \pr^{}_{\BB_A}\, d^{}_{\B_A} \, \inc^{}_{\BB_A}
              =  \pr^{}_{\BB_A}\, d^{}_{\B_A}
  \end{equation}
  for $d^{}_{\B_A}$, $d^{}_{\BB_A}$ the differentials on $\B_A$, $\BB_A$, respectively,
and $\pr^{}_{\BB_A}:\B_A\twoheadrightarrow \BB_A$
the projection map $1_A\ot \pr^{}_{\bar{A}}\ot\cdots\ot
\pr^{}_{\bar{A}}\ot 1_A$.

We require one small observation for
converting from unreduced to reduced resolutions:
\begin{equation}\label{ProjectD}
  \pr^{}_{\BB_A}\, d^{}_{\B_A} \, (1_{\B_A}-\inc^{}_{\BB_A}\pr^{}_{\BB_A}) \equiv 0
  \quad\text{ on } \B_A
\end{equation}
since $\Ima(1_{\B_A}-\inc^{}_{\BB_A}\pr^{}_{\BB_A})=\ker \pr^{}_{\BB_A}$
is the span of pure tensors in $\B_A$ with a scalar for some inner tensor
component.
For example, 
$$
\begin{aligned}
  d^{}_{\B_A}& (1_A \ot a_1\ot 1_A\ot a_3\ot 1_A) 
 \\ & =
 a_1\ot 1_A\ot a_3\ot 1_A 
 - 1_A\ot a_1\ot a_3\ot 1_A 
 + 1_A\ot a_1\ot a_3\ot 1_A 
 -   1_A\ot a_1\ot 1_A \ot a_3 
  \\ & =
 a_1\ot 1_A\ot a_3\ot 1_A 
 -   1_A\ot a_1\ot 1_A \ot a_3 
\end{aligned}
$$
which projects to zero under $\pr^{}_{\BB_A}$. 

\subsection*{Iterated twisting}
We give notation to iterations extending the twisting map $\tau$
to the bar complex as in~\cite{GG, SW-twisted}:
Define
\begin{equation}\label{iterativetwisting}
  \tau^{}_{_{\B_R}}: S\ot \B_R\longrightarrow \B_R\ot S
\quad\text{ and }\quad
\tau^{}_{_{\B_S}}: \B_S\ot R\longrightarrow R\ot \B_S
\end{equation}
in each degree $n\geq 0$
%\begin{equation}\label{iterativetwisting}
%  \begin{aligned}
%    \tau^{}_{\B_R}
 %   \!:&&
% &S \ot \ (R\ot R^{\ot n}\ot R) &\longrightarrow& &&(R\ot R^{\ot  
%   n}\ot R)\ot S 
%\, ,\\
%\  \tau^{}_{\B_S}
%\!:&& 
%&(S\ot S^{\ot n}\ot S)\ot R &\longrightarrow&  &&R  \ot (S\ot S^{\ot  
 % n}\ot S)  ,
%\end{aligned}
%\end{equation}
as the obvious compositions (with $n+2$ factors)
$$
\begin{aligned}
\tau^{}_{_{\B_R}}
&=
(1_R\ot\cdots\ot 1_R\ot \tau) 
\ 
\cdots 
\ 
(1_R\ot \tau\ot 1_R\ot\cdots\ot 1_R)\ 
(\tau\ot 1_R\ot\cdots\ot 1_R) 
\, , 
\\
%\tau^{}_{\B_S^{},R}^{}
 \tau^{}_{_{\B_S}}
&=(\tau\ot 1_S\ot\cdots\ot 1_S) 
\  \cdots \ \,
(1_S\ot \cdots\ot 1_S\ot \tau\ot 1_S)
\ \, (1_S\ot\cdots\ot 1_S\ot \tau) 
\,  .
\end{aligned}
$$
We obtain versions
$$\tau_{_{\BB_R}}: S\ot \BB_R\longrightarrow \BB_R\ot S
\qquad\text{ and }\qquad
\tau_{_{\BB_S}}: \BB_S\ot R\longrightarrow R\ot \BB_S
$$
for the reduced bar complexes by composing
with projection (and inclusion):
Set
\begin{equation}\label{iterativetwistingreduced}
\begin{aligned}
\tau_{_{\BB_R}}
= (\pr^{}_{\BB_R}\ot 1_S)\ \tau_{_{\B_R}}
\, 
\qquad\text{ and }\qquad
\tau_{_{\BB_S}}
=(1_R\ot \pr^{}_{\BB_S} )\, \tau_{_{\B_S}}
\end{aligned} 
\end{equation}
for projection maps
$\pr^{}_{\BB_R}: \B_R \twoheadrightarrow \BB_R$
and
$\pr^{}_{\BB_S} : \B_S \twoheadrightarrow \BB_S$
(see \cref{DifferentialReduced}). 
 As  twisting maps
  commute with multiplication 
  (see \cref{eqn:taumm} and
  \cref{TwistingDefDiagram}),
  so does iterative twisting:
\begin{lemma}
   \label{twistingcommutesgeneralized}
Iterative twisting commutes with multiplication in the algebras $R$ and $S$:
For any $\ell, j \geq 0$, the following diagram of vector spaces is commutative
 \begin{small}
   \begin{equation*}
    \hspace{5ex}
\mbox{
\entrymodifiers={+!!<0pt,\fontdimen22\textfont2>}
\xymatrixcolsep{6ex}
\xymatrixrowsep{6ex}
\xymatrix{
S\ot R^{\ot \ell}\ot R\ot R \ot R^{\ot j} 
\ar[rrr]^{1_S\ot 1_R^{\ot \ell} \ot\, m_{_R}\ot 1_R^{\ot j}}
\ar[d]_{\tau_{_{\B_R}}  }
&  & &
S\ot R^{\ot \ell}\ot R\ot R^{\ot j}
\ar[d]^{\tau_{_{\B_R}} } 
& &
\\
R^{\ot \ell}\ot R\ot R \ot R^{\ot j}\ot S 
\ar[rrr]_{1_R^{\ot \ell} \ot \,m_{_R}\ot 1_R^{\ot j}\ot 1_S}
& & &
R^{\ot \ell}\ot R\ot R^{\ot j}\ot S
\ 
& &
 \, ,
}}
\end{equation*}
\end{small}%
and a similar diagram holds for $\tau_{_{\B_S}}$ with the roles of $S$ and $R$ reversed.
% \begin{small}
 %  \begin{equation*}
  %  \hspace{5ex}
%\mbox{
%\entrymodifiers={+!!<0pt,\fontdimen22\textfont2>}
%\xymatrixcolsep{6ex}
%\xymatrixrowsep{6ex}
%\xymatrix{
%S^{\ot \ell}\ot S\ot S \ot S^{\ot j} \ot R 
%\ar[rrr]^{1_S^{\ot \ell} \ot\, m_{_S}\ot 1_S^{\ot j}\ot 1_R}
%\ar[d]_{\tau^{}_{\B_S} }
%&  & &
%S^{\ot \ell}\ot S \ot S^{\ot j} \ot R 
%\ar[d]^{\tau^{}_{\B_S}}
%& &
%\\
%R \ot S^{\ot \ell}\ot S\ot S \ot S^{\ot j} 
%\ar[rrr]_{1_R\ot 1_S^{\ot \ell} \ot \, m_{_S}\ot 1_S^{\ot j}}
%& & &
%R \ot S^{\ot \ell}\ot S \ot S^{\ot j}
%& &
% \, 
%}}
%\end{equation*}
%\end{small}%
\end{lemma}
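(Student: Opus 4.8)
The plan is to reduce the claim to the defining property of a single twisting map, equation \eqref{eqn:taumm}, applied one tensor factor at a time. Recall that $\tau^{}_{_{\B_R}}$ on the degree-$(\ell+1+j)$ component $S\ot R^{\ot(\ell+1+j)}$ is the composition that successively passes the $S$-factor to the right past each $R$-factor; concretely it is the string of maps $(1^{\ot(\ell+1+j)}_R\ot\tau)\cdots(1^{\ot\ell}_R\ot\tau\ot 1^{\ot(1+j)}_R)(\tau\ot 1^{\ot(\ell+j)}_R)$, reading right to left. I would first observe that the $R$-factors among the first $\ell$ slots and the $R$-factors among the last $j$ slots play no essential role in the computation: the maps $\tau$ acting on those slots commute with $1_S\ot 1^{\ot\ell}_R\ot m^{}_R\ot 1^{\ot j}_R$ trivially (disjoint supports), so one may "slide past" them and it suffices to treat the case $\ell=j=0$, i.e.\ to show that the composite $S\ot R\ot R\xrightarrow{\ \tau\ot 1\ } R\ot S\ot R\xrightarrow{\ 1\ot\tau\ } R\ot R\ot S$ equals $S\ot R\ot R\xrightarrow{\ 1_S\ot m^{}_R\ } S\ot R\xrightarrow{\ \tau\ } R\ot S$ followed by nothing — wait, more precisely that
\[
(m^{}_R\ot 1_S)\,(1_R\ot\tau)\,(\tau\ot 1_R)=\tau\,(1_S\ot m^{}_R)
\]
as maps $S\ot R\ot R\to R\ot S$. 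This is exactly one half of the hexagon in \cref{TwistingDefDiagram}: setting the left $S$-factor to $1_S$ in \eqref{eqn:taumm} collapses $\tau(1_S\ot r)=r\ot 1_S$, and the defining identity becomes precisely this equality.

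Next I would assemble the general case from this base case by an inductive "bubbling" argument. Write the full $\tau^{}_{_{\B_R}}$ on $S\ot R^{\ot\ell}\ot R\ot R\ot R^{\ot j}$ as (pass $S$ through the first $\ell$ slots) $\circ$ (pass $S$ through the two special slots) $\circ$ (pass $S$ through the final $j$ slots)—in the appropriate right-to-left order—and similarly $\tau^{}_{_{\B_R}}$ on $S\ot R^{\ot\ell}\ot R\ot R^{\ot j}$ as (pass through $\ell$) $\circ$ (pass through the merged slot) $\circ$ (pass through $j$). The passage through the last $j$ slots is literally the same map on both sides (applied after the multiplication on the left side, before it on the right side, but these commute since $m^{}_R$ on the special slots has support disjoint from those $j$ trailing $\tau$'s, and $m^{}_R$ is among $R$-factors, unaffected by where the $S$-factor currently sits relative to them only through Koszul signs, which here are all trivial since $\tau$ and $m^{}_R$ are degree-zero maps in the internal sense). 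Thus one reduces to the case $j=0$. Then the passage of $S$ through the first $\ell$ slots is identical on both sides and occurs last on the left, last on the right; peeling it off reduces to $\ell=0$, which is the base case above.

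The one genuine step is the base case, and it is immediate from \eqref{eqn:taumm}: specialize the left-hand $S$ entry to $1_S$, use $\tau(1_S\ot-)=\mathrm{id}$ to kill the outer $1\ot\tau\ot1$ and one of the $\tau$'s, and read off exactly $(m^{}_R\ot 1_S)(1_R\ot\tau)(\tau\ot 1_R)=\tau(1_S\ot m^{}_R)$. The analogous statement for $\tau^{}_{_{\B_S}}$ follows by the same argument with $\tau$ replaced by $\tau^{-1}\colon R\ot S\to S\ot R$ throughout—or, equivalently, by applying the $R$-case to the opposite algebras, since the right-left distinction is cosmetic as noted after \cref{CompatibleResolutionsDef}.

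I do not expect any serious obstacle here; the only thing to be careful about is bookkeeping the order of compositions when "peeling off" the passage through the irrelevant slots, and confirming that every relevant map is internally degree zero so that no Koszul signs from \eqref{signconvention} intervene. If one prefers to avoid even that bookkeeping, an alternative is a direct induction on $\ell+j$ where the inductive step moves the $S$-factor one slot at a time and invokes the base identity at the moment the $S$-factor stands immediately to the left of the pair being multiplied; this is slightly longer to write but entirely mechanical.
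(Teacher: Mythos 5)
Your proposal is correct. The paper gives no proof of this lemma at all --- it is asserted as an immediate consequence of the defining identity \cref{eqn:taumm} (see the sentence preceding the statement and \cref{TwistingDefDiagram}) --- and your argument (specialize one $S$-factor in \cref{eqn:taumm} to $1_S$ to obtain the one-slot identity $(m_R\ot 1_S)(1_R\ot\tau)(\tau\ot 1_R)=\tau(1_S\ot m_R)$, then peel off the $\tau$'s acting on the $\ell$ leading and $j$ trailing slots, which commute with $m_R$ on the two special slots) is precisely the routine verification the authors leave to the reader; the only nitpick is that with the \emph{left} $S$-factor set to $1_S$ it is the last $1\ot\tau\ot 1$ and the first factor of $\tau\ot\tau$ that collapse, not the outer $1\ot\tau\ot 1$'s, but either specialization yields the same base identity.
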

%\begin{small}
%  \begin{equation}
%      \label[diagram]{twistingcommutes}
%    \hspace{5ex}
 %   \mbox{
%\entrymodifiers={+!!<0pt,\fontdimen22\textfont2>}
%\xymatrixcolsep{6ex}
%\xymatrixrowsep{6ex}
%\xymatrix{
%S\ot R\ot R
%\ar[rr]^{1_S\ot m_{_R}}  \ar[d]_{(1_R\ot \tau)(\tau\ot 1_R)}
% &  & 
%S\ot R
%\ar[d]^{\tau}
%&  & 
% \\
%R\ot R \ot S
%\ar[rr]_{m_{_R}\ot 1_S}
%%\ar@<-1ex>[u]^{\alpha_S\ot 1}  
% & & 
%R\ot S\ ,
%%\ar@<-1ex>[u]_{1\ot\iota_{_S}} 
% &  &
%}}
%\hspace{-15ex}
%\mbox{
%\entrymodifiers={+!!<0pt,\fontdimen22\textfont2>}
%\xymatrixcolsep{6ex}
%\xymatrixrowsep{6ex}
%\xymatrix{
%S\ot S \ot R
%\ar[rr]^{m_{_S}\ot 1_R}   \ar[d]_{(\tau\ot 1_S)(1_S\ot\tau)}
% &  & 
%S\ot R  \ar[d]^{\tau}
%&  & 
% \\
%R\ot S\ot S
%\ar[rr]_{1_R\ot \, m_{_S}}
%%\ar@<-1ex>[u]^{1\ot\, \alpha_R}  
% & & 
% R\ot S
% \, 
% & & 
%}}
%\end{equation}
%\end{small}%
%\end{proof}

\subsection*{Compatibility of bar resolutions}
Iterated twisting provides compatibility maps
(see \cref{CompatibleResolutionsDef}).
A sketch of the proof can be found in \cite[Prop.~2.20(ii)]{SW-twisted};
we include here a more detailed statement and
complete argument for later use.
\begin{lemma}
  \label{BarCompatible}
Bar resolutions are compatible with the twisting map $\tau:S\ot R\rightarrow R \ot S$:
\begin{itemize}\setlength{\itemindent}{-5ex}
\item
  $\B_R$ and $\B_S$ via 
$\tau_{_{\B_R}}:\ S \ot \ \B_R \rightarrow \B_R \ot S$
and
$\tau_{_{\B_S}}:
\B_S\ot R \rightarrow R \ot \B_S$, respectively,
\item 
  $\, \BB_R\, $ %$\hphantom{,}$
  and  $\BB_S$ via 
$
\tau_{_{\BB_R}}: \ \, S \ot \ \, \BB _R
\rightarrow \, \BB_R   \ot S$
and
$\tau_{_{\BB_S}}:
\  \BB_S\ot R \rightarrow R\ot \, \BB_S
$, respectively.
\end{itemize}
\end{lemma}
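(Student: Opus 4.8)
I would verify directly that the iterated twist $\tau_{\B_R}$ from \cref{iterativetwisting} satisfies all the conditions of \cref{CompatibleResolutionsDef}, as spelled out in the diagrams of \cref{CompatibleResolution}, and then deduce the reduced statement. The assertions about $\B_S$ and $\BB_S$ follow by the symmetric argument, with the roles of $R$ and $S$ exchanged, using the version of \cref{twistingcommutesgeneralized} stated there for $\tau_{\B_S}$ (i.e.\ working with the twisting map $\itau$).

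\emph{Bijectivity, the chain map property, and lifting $\tau$.} Since $\tau$ is bijective, each single-twist factor $1_R\ot\cdots\ot\tau\ot\cdots\ot 1_R$ in the definition \cref{iterativetwisting} of $\tau_{\B_R}$ is bijective, hence so is their composite $\tau_{\B_R}$. For the chain map property, recall from \cref{eqn:bar-diff} that the bar differential is a signed sum of maps, each applying $m_R$ to two adjacent tensor slots; because $S$ sits in homological degree $0$, the Koszul sign rule \cref{signconvention} produces no signs in $1_S\ot d$ or $d\ot 1_S$, so it is enough to check that $\tau_{\B_R}$ intertwines each individual term, which is precisely \cref{twistingcommutesgeneralized} for the appropriate values of $\ell$ and $j$; summing over $i$ with the common signs gives $\tau_{\B_R}(1_S\ot d)=(d\ot 1_S)\,\tau_{\B_R}$. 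The same computation for $d=m_R\colon (\B_R)_0=R\ot R\to R$, i.e.\ the case $\ell=j=0$ (equivalently the defining hexagon of \cref{TwistingDefDiagram}), shows that $\tau_{\B_R}$ lifts $\tau$ and fits into the first diagram of \cref{CompatibleResolution}.

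\emph{Bimodule structure and multiplication in $S$.} The $R$-bimodule structure on $(\B_R)_n$ multiplies only into the two outer tensor slots, so the lower pentagon of \cref{CompatibleResolution} again decomposes into instances of ``iterated twisting commutes with $m_R$'', i.e.\ \cref{twistingcommutesgeneralized}: moving an $S$-factor through $R\ot\B_R\ot R$ and then multiplying the outer $R$'s into $\B_R$ agrees with multiplying first and then moving $S$ through. For the top hexagon (compatibility with $m_S$), since $\tau_{\B_R}$ is a composition of single twists it suffices to treat one tensor slot at a time, where the assertion is the associativity condition \cref{eqn:taumm} for $\tau$ itself (moving two $S$-factors past one $R$ and then multiplying them, versus multiplying first); iterating over the $n+2$ slots gives the hexagon. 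This establishes compatibility of $\B_R$ with $\tau$ via $\tau_{\B_R}$.

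\emph{The reduced resolutions.} With the identification \cref{eqn:bar-id} and $\tau_{\BB_R}$ as in \cref{iterativetwistingreduced}, I would observe that $\tau_{\B_R}$ respects the decomposition of $\B_R$ into its reduced summand and the complement: the complement of $(\BB_R)_n$ in $(\B_R)_n$ is spanned by pure tensors having a scalar in some inner slot, and since $\tau(s\ot 1_R)=1_R\ot s$, moving an $S$-factor past an inner $1_R$ leaves that slot scalar — likewise for $\tau_{\B_R}^{-1}$. Hence $\tau_{\BB_R}$ is a well-defined bijection $S\ot\BB_R\to\BB_R\ot S$, and the chain map, lifting, bimodule, and $m_S$ properties descend from the unreduced case by \cref{DifferentialReduced} and \cref{ProjectD}. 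I expect the main obstacle to be organizational rather than conceptual: in the lower pentagon of \cref{CompatibleResolution} one must keep careful track of the order in which the various $R$- and $S$-factors are pushed through $\B_R$ so that each step is a legitimate instance of \cref{twistingcommutesgeneralized} or \cref{eqn:taumm}; apart from this bookkeeping (and the minor check that $\tau_{\B_R}$ preserves the reduced complement), no new idea beyond those two identities is needed.
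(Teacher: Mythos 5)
Your proposal is correct and takes essentially the same route as the paper: the unreduced case is reduced to \cref{twistingcommutesgeneralized} and the defining identity \cref{eqn:taumm}, and the reduced case rests on the observation that iterated twisting preserves $\ker \pr^{}_{\BB_R}$ (because $\tau(s\ot 1_R)=1_R\ot s$) combined with \cref{ProjectD} and \cref{DifferentialReduced}. The one small variation is your bijectivity argument for $\tau_{_{\BB_R}}$ — deduced from the fact that $\tau_{_{\B_R}}$ and its inverse both preserve the complements, hence induce a bijection on the quotients — where the paper instead factors $\tau_{_{\BB_R}}$ and checks directly that each factor $(\pr^{}_{\bar R}\ot 1_S)\,\tau$ is injective and surjective; both work, though note that $\tau_{_{\B_R}}$ preserves only the complement and not the chosen copy of $\BB_R$, so the two-sided containment you state is precisely what is needed.
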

\begin{proof}
\cref{twistingcommutesgeneralized}
implies that 
$\tau_{_{\B_R}}$ 
defines a bijective chain map that
commutes with the $R$-bimodule structure of $\B_R$
and multiplication in $S$
(see \cref{TwistingDefDiagram,CompatibleResolution} and \cref{eqn:bar-diff}).
To verify that
$\tau_{_{\BB_R}}$ is also
a chain map, we employ the 
projection map $\pr^{}_{\BB_R}:\B_R\twoheadrightarrow \BB_R$
and
inclusion map $\inc^{}_{\BB_R}: \BB_R\hookrightarrow \B_R$
and note that iterated twisting 
preserves $\ker\pr^{}_{\BB_R}$.
For $1$ the identity map with subscript indicating domain,
see \cref{DifferentialReduced,iterativetwistingreduced},
$$
\begin{aligned}
\tau_{_{\BB_R}}(1_S\ot d^{}_{\BB_R}) \ \ 
% &=\ \ 
%(\pr^{}_{_{\BB_R}}\ot 1_S)\, \tau_{_{\B_R}}\, (1_S\ot \inc^{}_{\BB_R})\
%(1_S\ot \pr^{}_{_{\BB_R}})\, (1_S\ot d^{}_{\B_R}\, \inc^{}_{\BB_R})
%\\
&= \ \ 
(\pr^{}_{\BB_R}\ot 1_S)\, \tau_{_{\B_R}}\, 
\big( 1_S\ot (1_{\B_R}+ \inc^{}_{\BB_R}\, \pr^{}_{\BB_R}-1_{\B_R} ) \big) \, (1_S\ot 
d^{}_{\B_R} \, \inc^{}_{\BB_R})
\\ &=\ \ 
(\pr^{}_{\BB_R}\ot 1_S)\, \tau_{_{\B_R}}\, (1_S\ot d^{}_{\B_R}
\, \inc^{}_{\BB_R})
\\ &  \qquad +
(\pr^{}_{\BB_R}\ot 1_S)\, \tau_{_{\B_R}}\,\big( 1_S\ot
(\inc^{}_{\BB_R}\,\pr^{}_{\BB_R}-1_{\B_R} ) \big) \, (1_S\ot d^{}_{\B_R} \, \inc^{}_{\BB_R})
\, .
\end{aligned}
$$
The second summand is the zero map since
$$
\tau_{_{\B_R}}\ \big(1_S\ot \ima(\inc^{}_{\BB_R}\,\pr^{}_{\BB_R}-1_{\B_R} ) 
\big) 
=
\tau_{_{\B_R}}\big(1_S\ot \ker\pr^{}_{\BB_R} \big) 
\subset 
\ker(\pr^{}_{\BB_R})\ot k1_S 
\, $$ 
as $\tau(1_S\ot r)=r\ot 1_S$ 
for all $r$ in $R$.
Then by \cref{ProjectD}, and as $\tau_{_{\B_R}}$ is a chain map, 
$$
\begin{aligned}
\tau_{_{\BB_R}} (1_S\ot d^{}_{\BB_R}) \ \ 
 &=\ \ \
 (\pr^{}_{\BB_R}\ot 1_S)\, \tau_{_{\B_R}}\,  (1_S\ot d^{}_{\B_R}) \, (1_S\ot \inc^{}_{\BB_R}) 
\\ &=\ \ \ 
(\pr^{}_{\BB_R}\ot 1_S)\,  (d^{}_{\B_R}\ot 1_S)\, \tau_{_{\B_R}}
\, (1_S\ot \inc^{}_{\BB_R}) 
%\\ &= \ \ \ 
%\big(\pr^{}_{\BB_R}\, d^{}_{\B_R} \,
%(\inc^{}_{\BB_R}\,\pr^{}_{\BB_R}+1_{\B_R}-\inc^{}_{\BB_R}\,\pr^{}_{\BB_R})
%\ot 1_S\big)\, \tau_{_{\B_R}}\, (1_S\ot \inc^{}_{\BB_R}) 
\\ &= \ \ \
(\pr^{}_{\BB_R}\,  d^{}_{\B_R}\, \inc^{}_{\BB_R}\,\pr^{}_{\BB_R}\ot 1_S)\, \tau_{_{\B_R}}
\, (1_S\ot \inc^{}_{\BB_R}) 
\\ & \qquad +
\big(  \pr^{}_{\BB_R}\, d^{}_{\B_R}\, (1_{\B_R}-\inc^{}_{\BB_R}\,\pr^{}_{\BB_R})\ot 1_S\big)\,
\tau_{_{\B_R}}
\, (1_S\ot \inc^{}_{\BB_R}) 
\\ &=\ \ \
(\pr^{}_{\BB_R}\, d^{}_{\B_R}\, \inc^{}_{\BB_R}\ot 1_S)\, (\pr^{}_{\BB_R}\ot 1_S)\, 
\tau_{_{\B_R}}
\, (1_S\ot \inc^{}_{\BB_R}) 
\\ &=\ \ \
(d^{}_{\BB_R}\ot 1_S)\, \tau_{_{\BB_R}}
\end{aligned}
$$
and hence $\tau_{_{\BB_R}}$ is a chain map.
%\begin{small}
%$$
%  \begin{tikzcd}[row sep=huge, column sep=large,
 %   ampersand replacement=\&]
 % %Weird ampersand option is so can cut and paste into Beamer and use code
 % %top floor 
% % \hphantom{A}%left most curved down arrow
 % \arrow[swap, bend right=35]{ddd}{\tau_{_{\BB_R}}}
 %%  \hphantom{A}% space for large curved left up arrow & qq
 % \&[-10ex]
%S\ot (\BB_R)_{n+1}
 %  \arrow[swap]{d}{}
  % \arrow{rr}{1_S\ot d}  
  % \&\&
  % S\ot (\BB_R)_{n}
   % \arrow{d}{}
   %\&[-10ex]
   % \arrow[bend left=35]{ddd}{\tau_{_{\BB_R}}}
%\\ 
 % \&
 % S\ot (\B_R)_{n+1}
 % \arrow{rr}{1_S\ot d}
 % \arrow[swap, shift right=2]{u}{1_S\ot \pr^{}_{\BB_R}}
 % \arrow[swap]{d}{\tau_{_{\B_R}}}
 % % \arrow[swap]{d}{\tau_{_{\B_R}}}
 % \&\&
 % S\ot (\B_R)_{n}
  %  \arrow[swap, shift right=2]{u}{1_S\ot \pr^{}_{\BB_R}}
 % \arrow{d}{\tau_{_{\B_R}}}
 % \& 
  % \\
 %\hphantom{.}  % \hphantom{AA}\rule{0ex}{2ex}  % space for large most left up arrow& qq
 % \&
  %(\B_R)_{n+1}\ot S
  %\arrow[swap]{d}{\pr^{}_{\BB_R}\ot 1_S}
%\arrow{rr}{d\ot 1_S}  
%\&\&
%(\B_R)_{n}\ot S
%\arrow[swap]{d}{\pr^{}_{\BB_R}\ot 1_S}
 %  \&
  % \hphantom{.}
  % \\
 %\hphantom{.}  % \hphantom{AA}\rule{0ex}{2ex}  % space for large most left up arrow& qq
 % \&
%(\BB_R)_{n+1}\ot S
%\arrow{rr}{d\ot 1_S}
 %   \arrow[swap, shift right=2]{u}{}
%\&\&
%(\BB_R)_{n}\ot S
 %   \arrow[swap, shift right=2]{u}{}
  % \&
  % \hphantom{.}
 %  \ .
 %\end{tikzcd}
%$$
%\end{small}%
%
A similar argument may be used to confirm that $\tau_{_{\BB_R}}$ commutes
with the $R$-bimodule
structure of $\BB_R$ and the multiplication in $S$ (see
\cref{CompatibleResolution}).

Lastly, we note that  $\tau_{_{\BB_R}}$ is
bijective. Indeed, 
$\tau_{_{\BB_R}}$ may be written as a composition 
of bijective maps 
$(\tau\ot 1_R\ot \cdots\ot 1_R)$ and $(1_R\ot\cdots\ot 1_R\ot \tau)$
and maps of the form
$$1_R\ot \cdots\ot 1_R\ot (\pr^{}_{R}\ot 1_S)\, \tau\ot 1_R\ot
\cdots\ot 1_R\, 
$$
as $\tau$ is just the swap (interchange) map on $S\ot k1_R$.
The map 
$(\pr^{}_{\bar R}\ot 1_S)\, \tau : S\ot \bar R \rightarrow \bar R \ot S$ is injective:  
If  $s\ot \bar r$ lies in its kernel
for $s$ in $S$ and $\bar r$ in $\bar R$, 
 then
$\tau (s\ot \bar r)$ lies in $k1_R\ot S$
and hence $s\ot \bar r$ lies in $\tau^{-1}(k1_R\ot S)=
S\ot k1_R$, so must be zero, and similarly for sums of such elements. 
This map 
%$(\pr^{}_{\BB_R}\ot 1_S)\tau$ 
is also surjective
as it has right inverse $(1_S\ot \pr^{}_{\bar R})\,\tau^{-1}$ on
$\bar R\ot S$.

Hence 
$\BB_R$ 
is compatible with $\tau$.
An analogous argument applies to 
$\BB_S$.
\end{proof}

\subsection*{Twisting bar resolutions}
The twisted product resolutions
$\B_R \ot_\tau \, \B_S$ 
and 
$\BB_R \ot_\tau \, \BB_S$ 
have respective $n$-th degree terms
$$
\begin{aligned}
(\B_R \ot_{\tau} \, \B_S)_n 
&\ =\ \bigoplus_{\ell=0}^{n}\ 
 R\ot R^{\,\ot (n-\ell)}\ot R \ot 
 S\ot {S}^{\, \ell}\ot S\
 \quad\text{ and }
 \\
 (\BB_R \ot_{\tau} \, \BB_S)_n 
&\ =\  \bigoplus_{\ell=0}^{n}\ 
 R\ot \bar{R}^{\,\ot (n-\ell)}\ot R \ot 
 S\ot \bar{S}^{\, \ell}\ot S
 \, .
\end{aligned}
 $$
We compare with the bar and reduced bar resolutions of $\ttp$:
$$
\begin{aligned}
(\B_{\ttp})_n 
&=\ttpp \ot ({\ttp})^{\ot n} \ot \ttpp
\\
(\BB_{\ttp})_n 
&=\ttpp \ot (\overline{\ttp})^{\ot n} \ot \ttpp
\, .
\end{aligned}
$$

\subsection*{Embedding twisted product resolutions}
We identify as $k$-vector spaces (see Eq.~(\ref{eqn:bar-id}))
\begin{equation}\label{ReducedTwistedTensorProduct}
  \overline{R\ott S}=(R\ott S)/ k\cdot (1_{R\ott S})
\quad\text{ with }\quad
(\bar{R}\ott \bar{S})\oplus (\bar{R}\ott k1_S)\oplus (k1_R\ott
\bar{S})
\, .
\end{equation}
Note here that $1_{R\ott S}=1_R\ot 1_S$.
This gives rise to an embedding 
$(\BB_{R\ott S})_n \subset (\B_{R\ott S})_n$ for each $n$.
Thus a choice of
sections of the vector space projection maps 
$R\twoheadrightarrow \bar{R}=R / k1_R$ and $S\twoheadrightarrow 
\bar{S}=S/ k 1_S$ give embeddings of bimodules in each degree 
\begin{equation}
  \label{embeddings}
  (\BB_R)_n \subset (\B_R)_n , 
\quad 
  (\BB_S)_n \subset (\B_S)_n, 
\quad 
  (\BB_{R\ott S})_n \subset (\B_{R\ott S})_n
 \, . 
\end{equation}

%%%%%%%%%%%%%%%%%%%%%%%%%%%%%%%%%%%%%%%%%%%%%%%%%%%%
%%%%%%%%%%%%%%%%%%%%%%%%%%%%%%%%%%%%%%%%%%%%%%%%%%%
\section{Twisted Alexander-Whitney and Eilenberg-Zilber maps}
\label{sec:AW-EZ}

For $k$-algebras $R$ and $S$ and a twisting map
$\tau:S\ot R\rightarrow R\ot S$,
we describe twisted Alexander-Whitney and Eilenberg-Zilber
maps on the reduced bar resolutions,
$$
\begin{aligned}
  \AW^{\tau} :  \BB_{\ttps} \ \longrightarrow   \ 
  \BB_R\ot_{\tau} \, \BB_S 
  \,  \quad\text{ and }\quad
 \ \, \EZ^{\tau} :   
  \BB_R\ot_{\tau} \, \BB_S 
    \ \longrightarrow  \ 
          \BB_{\ttps}\,  , 
\end{aligned} 
$$ 
by first defining versions on
the unreduced bar resolutions,
$$
\begin{aligned}
 \ \  \AWtB :  \B_{\ttps} \ \longrightarrow   \ 
  \B_R\ot_{\tau} \, \B_S 
  \,  \quad\text{ and }\quad
  \EZtB :   
  \B_R\ot_{\tau} \, \B_S 
    \ \longrightarrow \ 
          \B_{\ttps}\,  .
\end{aligned} 
$$
These are given as iterations of the twisting map $\tau$
combined with
perfect shuffles/unshuffles on tensor components
and versions of the traditional front/back face maps
and shuffle product maps
(see \cite{May}, for example):
%\begin{equation*}
%\begin{tikzcd}[row sep=huge, column sep=small, ampersand replacement=\&]
 % %Weird ampersand option is so can cut and paste into Beamer and use code
 % %top floor 
 % \hphantom{A}%left most curved down arrow
 % \arrow[swap, bend right=40, dashed]{dd}{\AWtB}
  % \hphantom{A}% space for large curved left up arrow & qq
  % \&[-3em]
  % (\B_{R\ot_{\tau} S})_{n}
  % \arrow[swap]{d}{\varrho_{n}}
  % \&[-3em]
  % \hphantom{A}%for curved arrows
  % \\
  %  \& Y_n=R^{\ot(n+2)}\ot S^{\ot(n+2)}
  %\arrow[swap,shift right=2]{u}{\varrho_{n}^{-1}
   % \hspace{15ex} \substack{\ \, \text{twisted perfect shuffle (up)}    \\
    %  \hphantom{xxxxx}\text{twisted perfect unshuffle (down)}}}
  %\arrow[swap]{d}{\phi_{n}}
  %\&
  % \\
  %\hphantom{A}\rule{0ex}{2ex}  % space for large most left up arrow
  % \&  (\B_{R} \otimes_{\tau} \B_S )_{n} 
  %\arrow[swap,shift right=2]{u}{\theta_{n}
   % \hspace{12ex} \substack{ \text{shuffle map (up)}\\
  %    \ \ \quad \qquad \, \text{front/back face map (down)}}}
  % \&%[30em]
  % \  \arrow[bend right=40, swap, dashed]{uu}{\EZtB}
  % \ \ 
  % % large curved up arrow 
%\end{tikzcd}
%\end{equation*}
Compare with maps in~\cite{GG};
we include details here for completeness and later use.

\subsection*{Intermediate complex}
We use an intermediate $(R\ott S)$-bimodule
resolution $Y\sd$ of $R\ott S$
with $n$-th term
$Y_n=R^{\ot n+2}\ot S^{\ot n+2}$:
Consider the complex
\begin{equation}
  \label[diagram]{IntermediateComplex}
%  \begin{equation}
\cdots\xrightarrow{ \ \ \Ydiff \ \ }
Y_1=R^{\ot 3}\ot S^{\ot 3}
\xrightarrow{ \ \  \Ydiff \ \ } 
Y_0=R\ot R \ot S \ot S
\xrightarrow{m_{_R}\ot \, m_{_S}}
R\ott S \longrightarrow 0\,
\, 
%\end{equation
\end{equation}
with
differential 
$
\Ydiff_n: Y_n%=R^{\ot(n+2)}\ot S^{\ot(n+2)}
 \rightarrow 
Y_{n-1}%=R^{\ot(n+1)}\ot S^{\ot(n+1)}
$  
defined by
$$
\Ydiff_n=\sum_{\ell=0}^n\ (-1)^\ell \
1_R^{\otimes \ell}\ot m_{_R}\ot 1_R^{n-\ell}
\ot
1_S^{\otimes \ell}\ot m_{_S}\ot 1_S^{n-\ell}\, .
$$
%i.e.,
%\begin{small}
%$$
%\begin{aligned}
 %(r_0\ot \cdots\ot r_{n+1}) 
%\,\ot\, 
%&(s_0\ot\cdots\ot s_{n+1}) 
% \xmapsto{\quad \Ydiff_{n} \quad} \\
%&\sum_{\ell=0}^{n} (-1)^\ell \,
%(r_0\ot\cdots\ot r_{\ell-1}\ot r_{\ell} r_{\ell+1}\ot 
%r_{\ell+2}\ot\cdots\ot r_{n+1})\\
% &\hphantom{.xxxxx}
% \ot 
%(s_0\ot\cdots\ot s_{\ell-1}\ot s_{\ell} s_{\ell+1}\ot s_{\ell+2}\ot\cdots\ot s_{n+1}) 
%\,  . 
%\end{aligned}
%$$ 
%\end{small}%
We imbue each term in (\ref{IntermediateComplex})
with the structure of an
$(R\ott S)$-bimodule
in a straightforward way
using the iterative twisting maps of (\ref{iterativetwisting})
and \cref{bimodstructure}:
Define a left-right action
of $R\ott S$ on each $Y_{n}$,
$$
(R\ott S)\ot Y_n\ot (R\ott S) 
\longrightarrow 
Y_n\, ,
$$
given as the composition
\begin{small}
$$
(m_{_R}\ot 1_R^{\ot n}\ot m_{_R}\ot 
m_{_S}\ot 1_S^{\ot n}\ot m_{_S}) 
\
(1_R\ot 1_R^{\ot (n+2)}\ot \tau\ot 1_S^{\ot (n+2)}\ot 1_S)
\ 
(1_R\ot \tau^{}_{\B_R} \ot \tau^{}_{\B_S}\ot 1_S) 
. 
$$
\end{small}%
In other words, $R$ acts on the left by left multiplication
in the leftmost tensor component, and $R$ acts on the right
by right multiplication on the rightmost tensor component from $R$ 
after iterated twisting through the components from $S$, 
and similarly for the action of $S$ (reversing left-right roles).
Then as $\tau$ is bijective, each term $Y_n$
of the complex is a free $(R\ott S)$-bimodule
with free basis given by a $k$-vector space basis of 
$$
1_R\ot R^{\ot n} \ot 1_R\ot 1_S\ot S^{\ot n} \ot 1_S
\, .
$$
One may check that the differential $\Ydiff\sd$
is an $(R\ott S)$-bimodule map using 
\cref{twistingcommutesgeneralized}
%(as $\tau$ commutes with the multiplication in $R$ and $S$)
and hence $Y\sd$ is indeed an $(R\ott S)$-bimodule resolution.
 
%%%%%%%%%%%%%%%%%%%%%%%%%%%%%
\subsection*{Action of the symmetric group}

We take a standard action of the symmetric group $\mathfrak{S}_n$ on
any $n$-fold tensor product of $k$-vector spaces $V_i$ for $i=1,\ldots,n$:
$$
\begin{aligned}
 \s:
V_1\ot  \cdots\ot  V_n
&\ \longrightarrow\
V_{\s^{-1}(1)}\ot \cdots\ot V_{\s^{-1}(n)}
\ , 
\\
v_1\ot \cdots \ot v_n 
&\ \overset{\s}{\longmapsto}\
v_{\s^{-1}(1)}\ot\cdots\ot v_{\s^{-1}(n)}
\quad\text{ for } \s \text{ in } \mathfrak{S}_n,\ v_i \text{ in } V_i \, .
\end{aligned}
$$
For $n=0$, we take $\mathfrak{S}_0=\{1\}$ acting trivially on $k=V^{\ot 0}$. 

%%%%%%%%%%%%%%%%%%%%%%%%%%%%%%%%%%%%%%%
\subsection*{Shuffles}
Recall that a permutation $\s$ in $\mathfrak{S}_n$ is an
$(\ell, n-\ell)$-{\em shuffle}
when
$$
\s(1) < \cdots< \s(\ell)
\quad\text{ and }\quad
\s(\ell+1) < \cdots< \s(n)\, .
$$
We denote the set of $(\ell, n-\ell)$-shuffles by $\mathfrak{S}_{\ell, n-\ell}\subset \mathfrak{S}_n$.
The 
{\em perfect (out) unshuffle}  in $\mathfrak{S}_{2n}$ 
acts as the vector space map
\begin{equation}
  \label{unshuffle}
  \begin{aligned}
    (R\ot S)^{\ot n}
    &\longrightarrow R^{\ot n}\ot S^{\ot n}\, ,
    \, \\
  (r_1\ot s_1) \ot\cdots\ot (r_n\ot s_n)
  &\longmapsto
  (r_1\ot\cdots\ot r_n) \ot (s_ 1\ot\cdots\ot s_n)\, .
  \end{aligned}
  \end{equation}
Its inverse is the {\em perfect (out) shuffle}, i.e.,
the $(n,n)$-shuffle in $\mathfrak{S}_{2n}$ defining the map 
$$
\begin{aligned}
  R^{\ot n}\ot S^{\ot n} &\longrightarrow (R\ot S)^{\ot n}
  ,\\
(r_1\ot \cdots\ot r_{n}) \ot (s_1\ot \cdots\ot s_{n})
&\longmapsto
(r_1\ot s_1)\ot\cdots\ot (r_{n}\ot s_{n})\, .
\end{aligned}
$$

%%%%%%%%%%%%%%%%%%%%%%%%%%%%%%%%%%%%%%%%%%%%%
\subsection*{Twisted perfect shuffle and unshuffle}
We construct twisted analogues.
Define the {\em twisted perfect unshuffle
map} $\varrho: \B_{\ttp} \rightarrow Y\sd$
in each degree $n\geq 0$ by
\begin{equation}
\label{twistedperfectunshufflemap}
\varrho_n : 
(R\ot S)^{\ot (n+2)}\longrightarrow R^{\ot (n+2)}\ot S^{\ot (n+2)}
\hspace{10ex}
\text{ (twisted perfect unshuffle)} 
\end{equation}
given as the composition 
$$
\varrho_n
=
(1_R^{\ot n+1} \ot \tau \ot 1_S^{\ot (n+1)})
\
(1_R^{\ot n} \ot \tau^{\ot 2} \ot 1_S^{\ot n})
\
\cdots
\ 
(1_R\ot \tau^{\ot (n+1)} \ot 1_S)
\, ,
$$
and define the {\em twisted perfect shuffle map}
$\varrho^{-1}: Y\sd\rightarrow \B_{\ttp}$
as its inverse:
$$
\varrho^{-1}_n: 
R^{\ot (n+2)}\ot S^{\ot (n+2)}
\longrightarrow
(R\ot S)^{\ot (n+2)}
\hspace{10ex}
\text{ (twisted perfect shuffle)}
\, .
$$
An application of \cref{twistingcommutesgeneralized} 
verifies that $\varrho$ and $\varrho^{-1}$
are $\ttpp$-bimodule maps.

\vspace{2ex}

%%%%%%%%%%%%%%%%%%%%%%%%% 
\begin{example}\label{GroupUnshuffle}
  {\em
    Consider the group ring $kG$ of
    a finite group $G\subset \GL(V)$ acting on a vector-space
    $V\cong k^n$ and extend
    to an action by automorphisms on $k[V]=k[x_1, \ldots, x_n]$, the polynomial ring
    on a basis of $V^*$ (or alternatively, $V$), denoted $f\mapsto \
    ^{g}\!f$.
    The semidirect product algebra $k[V]\rtimes G$ is called the
    {\em skew group algebra}
    or {\em smash product algebra} 
    and 
    is the twisted tensor product
      $$k[V]\ott kG=k[V]\rtimes G=k[V]\# kG$$
given by twisting map
              \begin{equation} %\label{GroupExample}
                \tau: kG\ot k[V]\longrightarrow k[V]\ot kG
                \qquad\text{ defined
                by }
              g\ot f \longmapsto {}^g\! f\ot g\, .
            \end{equation}
    For $n=2$,
    the twisted perfect unshuffle 
    $\varrho_2:(k[V] \ot kG)^{\ot 4} \mapsto k[V]^{\ot 4} \ot (kG)^{\ot 4}$
    is given by 
    $$
    \begin{aligned}
       f_1\ot g_1\,\ot & \, f_2\ot g_2\ot f_3\ot g_3\ot f_4\ot g_4 
      \\
      &\longmapsto 
      f_1\ot  {}^{g_1}\! f_2\ot {}^{g_1g_2}\! f_3\ot {}^{g_1g_2g_3}\! f_4\ot 
      g_1\ot    g_2\ot g_3\ot g_4
      \, ,
\end{aligned}
  $$
  whereas
    the twisted perfect shuffle 
    $\varrho^{-1}_2: k[V]^{\ot 4} \ot (kG)^{\ot 4} \mapsto   (k[V]\ot kG)^{\ot 4} 
    $
    is given by 
    $$
    \begin{aligned}
      f_1\ot f_2\, \ot & \, f_3\ot f_4\ot g_1\ot g_2\ot g_3\ot g_4 
      \\
      &\longmapsto 
      f_1\ot g_1\ot\, {}^{g_1^{-1}}\! f_2\ot g_2\ot
      \, {}^{(g_1g_2)^{-1}}\! f_3\ot g_3 \ot\, {}^{(g_1g_2g_3)^{-1}}\!
      f_4\ot g_4
      \, .
    \end{aligned}
  $$
}\end{example}
%%%%%%%%%%%%%%%%%%%%%%%%%%%%%%%%%%%%%%%%%%%%%%%%%%%%

\vspace{2ex}

%%%%%%%%%%%%%%%%%%%%%%%%%%%%%%%%%%%%%%%%%
\subsection*{Front/back face map}
Let
$\phi: Y\sd\rightarrow \B_R \ott \B_S$
be the {\em front/back face map}
defined in each degree $n\geq 0$ by
\begin{equation}
  \label{frontbackmap}
\phi_n:
R^{\ot (n+2)} \ot S^{\ot (n+2)} 
\longrightarrow
(\B_R \ott \B_S)_n
\, , \qquad
\phi_n = \sum_{\ell=0}^n\ (-1)^{\ell(n-\ell)}\  \phi_{n-\ell,\ell}
\, ,
\end{equation}
for
%  $$
%\begin{aligned}
 $ \phi_{n-\ell, \ell}: R^{\otimes (n+2)}\ot S^{\otimes (n+2)}
\rightarrow 
(\B_R)_{n-\ell} 
\ot 
(\B_S)_{\ell}
$
%\end{aligned} 
%$$
the $\ttpp$-bimodule map
given by

\vspace{-.5ex}

\begin{small}
$$
\begin{aligned}
1_R\ot r_1\ot \cdots\ot r_n\ot 1_R
& \ot 1_S\ot s_1\ot\cdots\ot s_n \ot 1_S
\\
&\longmapsto\
(r_1\cdots r_{\ell})\ot r_{\ell+1}\ot \cdots\ot r_n\ot 1_R
\ot 
1_S\ot s_1\ot\cdots\ot s_{\ell}\ot (s_{\ell+1}\cdots s_n)
\, .
\end{aligned}
$$
\end{small}%

%%%%%%%%%%%%%%%%%%%%%%%%%%%%%%%%%%%

%%%%%%%%%%%%%%%%%%%%%%
\subsection*{Shuffle map}
Let $\theta:\B_R\ott \B_S \rightarrow Y\sd$
be the {\em shuffle map}
defined
in each degree $n\geq 0$ by
\begin{equation}
  \label{shufflingsummap}
  \theta_n: (\B_R \ott \B_S)_n\longrightarrow
  Y_n=R^{\ot (n+2)} \ot S^{\ot (n+2)}
\, , \qquad
 \theta_n = \bigoplus_{\ell =0}^n\ (-1)^{\ell(n-\ell)}\ \theta_{\ell, n-\ell}
 \, ,
\end{equation}
for
$\theta_{\ell, n-\ell}: 
(\B_R)_{\ell} \ot ( \B_S)_{n-\ell}
\rightarrow R^{\ot (n+2)}\ot S^{\ot (n+2)}
$
 the $\ttpp$-bimodule map given by

\vspace{-.5ex}

\begin{small}
$$
\begin{aligned} 
1_R\ot r_1 \ot\cdots  
 \ot \, &\, r_{\ell} \ot 1_R
 \ot 1_S\ot s_{\ell+1}\ot\cdots\ot s_{n} \ot 1_S\\
&\longmapsto 
\sum_{\s \in \mathfrak{S}_{\ell, n-\ell}} \ 
\sgn(\s)\ 
1_R\ot \s \big( r_1\ot\cdots\ot r_\ell \ot 
\underbrace{1_R\ot \cdots\ot 1_R\rule[-1ex]{0ex}{2ex}}_{ (n-\ell)\text{-times} \rule{0ex}{1ex}}  \big) \ot 1_R\\
&\hspace{22ex}
 \ot\ 
1_S \ot \s \big(
\underbrace{1_S\ot \cdots\ot 1_S\rule[-1ex]{0ex}{2ex}}_{ \ell\text{-times} \rule{0ex}{1ex}} 
 \ot \, s_{\ell+1}\ot\cdots\ot s_{n} \big) \ot 1_S\, .
\end{aligned}
$$
\end{small}

\vspace{2ex}

%%%%%%%%%%%%%%%%%%%%%%%%%%%%%%%%%%%%%
\begin{example}{\em
    For $n=5$, $\ell=2$,
the summand of
$$\theta_{2,3}:(R\ot R\ot R \ot R)
\ot (S\ot S \ot S \ot S \ot S)
\longrightarrow
R^{\ot 7}\ot S^{\ot 7}
$$
corresponding
to the $(2,3)$-shuffle 
$\s=(1\ 3)(2\ 5\ 4)$ in $\mathfrak{S}_{2,3}\subset\mathfrak{S}_5$
applied to
the element $1_R\ot r_1\ot r_2\ot 1_R\ot 1_S \ot s_1\ot s_2\ot s_3 \ot 1_S$
is
$$
\begin{aligned}
1_R\, \ot & \, \s\big(r_1\ot r_2 \ot 1_R\ot 1_R \ot 1_R\big)\ot 1_R
\ot
1_S\ot \s\big(1_S\ot 1_S \ot s_3\ot s_4\ot s_5\big)\ot 1_S\\
&=
1_R\ot (1_R \ot 1_R \ot r_1\ot 1_R \ot r_2)\ot 1_R
\ot
1_S\ot (s_3\ot s_4\ot 1_S \ot s_5\ot 1_S) \ot 1_S
\, .
\end{aligned}
$$
}
\end{example}

\vspace{2ex}

%%%%%%%%%%%%%%%%%%%%%%%%%%%%%%%%%
%%%%%%%%%%%%%%%%%%%%%%%%%%%%%%%%%%%%%%%%%
\subsection*{Construction of twisted AW and
  EZ maps}

We now
define the twisted Alexander-Whitney and Eilenberg-Zilber maps
using 
the {twisted perfect unshuffle} map $\varrho$ of~(\ref{twistedperfectunshufflemap}), 
the {front/back face map}  $\phi$ of \cref{frontbackmap}, 
and the shuffle map $\theta$  of \cref{shufflingsummap}:
Define $\ttpp$-bimodule homomorphisms
$$
\AWtB: \B_{\ttp}\longrightarrow \B_R\ott\B_S
\quad\text{ and }\quad
\EZtB: \B_R\ott\B_S\longrightarrow \B_{\ttp}
$$
in each degree $n\geq 0$ as the compositions
$$
(\AWtB)_n =  \phi_n \ \varrho_n
\qquad\text{ and }\qquad
(\EZtB)_n \ =  
\varrho_n^{-1}\ \theta_n     
\, .
$$
We use the intermediate complex $Y$
to take advantage of traditional results
in the untwisted case
(see \cite{May}, for example):
\begin{equation*}
\begin{tikzcd}[row sep=huge, column sep=small, ampersand replacement=\&]
  %Weird ampersand option is so can cut and paste into Beamer and use code  
  %top floor 
  \hphantom{A}%left most curved down arrow  
  \arrow[swap, bend right=50, dashed]{dd}{\AWtB}
   \hphantom{A}% space for large curved left up arrow & qq  
   \&[-3em]
   (\B_{R\ot_{\tau} S})_{n}
   \arrow[swap]{d}{\varrho_{n}} 
   \&[-3em]
   \hphantom{A}%for curved arrows  
   \\
    \& Y_n=R^{\ot(n+2)}\ot S^{\ot(n+2)}
  \arrow[swap,shift right=2]{u}{\varrho_{n}^{-1}
    \hspace{12ex} \substack{\quad \ \text{twisted perfect shuffle (up)}    \\
      \hphantom{x,}\quad\quad\ \, \text{twisted perfect unshuffle (down)}}}
  \arrow[swap]{d}{\phi_{n}}
  \&
   \\
  \hphantom{A}\rule{0ex}{2ex}  % space for large most left up arrow  
   \&  (\B_{R} \ott \B_S )_{n} 
  \arrow[swap,shift right=2]{u}{\theta_{n}
    \hspace{11.5ex} \substack{  \text{shuffle map (up)}\\
      \qquad\quad\ \ \hphantom{..} \text{front/back face map (down)}}}
   \&%[30em]
   \  \arrow[bend right=50, swap, dashed]{uu}{\EZtB}
   % large curved up arrow 
\end{tikzcd}
\end{equation*}

\vspace{2ex}
%%%%%%%%%%%%%%%%%%%%%%%%%%%%%%%%%%%%%%%%%%%%%%%%%55

\begin{example}\label{GroupExampleAWEZmaps}
    {\em
For a finite group $G\subset \GL(V)$ acting on $V=k^n$
with $k[V]=k[x_1,\ldots, x_n]$
as in \cref{GroupUnshuffle},
we  suppress tensor symbols and write $fg$ for $f\ot g$ in $k[V]\ott
kG=k[V]\# G$
for $\tau: g\ot f\mapsto  \, ^g\! f\ot g$ with $g$ in $G$ and $f$ in $k[V]$.
Then
$$\AW^{\tau}_{\B}: \B_{k[V]\# kG}\longrightarrow \B_{k[V]}\ot_{\tau}\B_{kG}$$
is given by
\begin{small}
$$
\begin{aligned}
&(\AW^{\tau}_{\B})_n(1\ot f_1g_1\ot\cdots\ot f_ng_n\ot 1) \\
& = \sum_{\ell=0}^n (-1)^{\ell (n-\ell)}
\big(f_1     ({}^{g_1}\! f_2)  ( {}^{g_1g_2}\! f_3)\cdots ({}^{g_1g_2\cdots g_{\ell
         -1}}\! f_{\ell}) \big)
   \ot ({}^{g_1g_2\cdots g_{\ell}}\! f_{\ell +1}) \ot\cdots
   \ot ( {}^{g_1g_2\cdots g_{n-1}}\! f_n) \ot 1_{k[V]} \\
   &  \hspace{45ex} \ot 1_G
   \ot g_1\ot \cdots\ot g_{\ell}\ot (g_{\ell+1}\cdots g_n)
   \, ,
\end{aligned}
$$
\end{small}
for $f_i$ in ${k[V]}$ and $g_i$ in $G$.
This formula defines a chain map for the reduced bar resolutions,
$\AW^{\tau} : \BB_{{k[V]}\# kG} \rightarrow \BB_{k[V]} \ott \BB_{kG}$,
by interpreting any tensor component except the first and last
(ignoring the middle term $1_{k[V]}\ot 1_G$)  in $k$
as zero,
see \cref{AW-EZforreducedbar} and its proof.

Rather than giving a closed formula for $\EZtB$ in this setting,
we just give a quick example 
of
$(\EZtB)_3$ mapping $(\B_{{k[V]}})_1\ot 
(\B_{kG})_2$ to $(\B_{{k[V]}\# kG})_3$:  For $A={k[V]}\# kG$, $f$ in
$k[V]$,
$g_i$ in $G$, 
and extra factors $1_{k[V]}$ and $1_G$ suppressed,
\begin{small}
$$
\begin{aligned}
  & \EZtB(1_{k[V]} \ot f\ot 1_{k[V]}\ot 1_G\ot g_2\ot g_3\ot 1_G)
\\ 
 &\ \ = 1_{A}\ot f\ot g_2\ot g_3\ot  1_{A}
  -1_A\ot 
g_2\ot \, ^{g_2^{-1}}\! f \ot g_3\ot 1_A 
+1_A\ot  g_2\ot  g_3\ot \ ^{(g_2 g_3)^{-1} } \! f \ot 1_A 
\\
\end{aligned}
$$
\end{small}
as the $(2,1)$-shuffles in $\mathfrak{S}_3$
are $(1)$, $(2\ 3)$, and $(1\ 2\ 3)$.
(Also see \cref{LieAlgebraExample}.)

Note that we may instead identify $k[V]\# kG$ with $kG\ott k[V]$ for the (inverse) twisting map
$
\tau: k[V]\ot kG \rightarrow kG \ot k[V]$
defined by
$f\ot g\mapsto g\ot \, ^{g^{-1}} \! f$
       and write
     $fg$ for $(1\ot f) (g\ot 1)$ in $kG\ott k[V]$.
     We then recover the twisted Alexander-Whitney
     and Eilenberg-Zilber maps
     for group actions
 constructed in~\cite{SW-grouptwistedAWEZ},
       and we correct here a typographical error
       on the last tensor component in the formula
       for
       $\AWtB: \B_{k[V]\# kG}\rightarrow \B_{kG}\ot \B_{k[V]}$:
       \begin{small}
         $$
\begin{aligned}
  &  (\AWtB)_n (1\ot   f_1g_1 \ot\cdots\ot f_ng_n \ot 1 ) \\
  &  \hspace{1em}  
  =  
  \sum_{\ell=0}^n (-1)^{\ell(n-\ell)} (g_1\cdots
   g_\ell) \ot g_{\ell+1}\ot\cdots \ot g_n\ot 1_G
     \\
&\hspace{3em}
  \ot 1_{k[V]}\ot{}^{(g_1\cdots g_n)^{-1}}\! f_1\ot\cdots\ot
  {}^{(g_{\ell}\cdots g_n)^{-1}}\! f_\ell
  \ot  ({}^{(g_{\ell+1}\cdots g_n)^{-1}}\! f_{\ell+1})
( {}^{(g_{\ell+2}\cdots g_n)^{-1}}\! f_{\ell+2})
\cdots
( {}^{g_n^{-1}}\! f_{n})
\end{aligned}
$$
\end{small}%
for $g_i$ in $G$ and $f_i$ in $k[V]$.
Again, this formula gives a chain map for the reduced bar resolutions
$\AW^{\tau} =\AW^G :
\BB_{S\# k[V]}\rightarrow \BB_{kG}\ott\, \BB_{k[V]}$ 
as in~\cite{SW-grouptwistedAWEZ} by interpreting any tensor component in
$k$ as zero
 (ignoring the middle term $1_G\ot 1_{k[V]}$).
}
\end{example}

\vspace{2ex}

%%%%%%%%%%%%
%%%%%%%%%%%%%%%%%%%%%%%%%%%%%%%
%%%%%%%%%%%%%%%%%%%%%%%%%%%%%%%%%%%%%%%%%%%%%%%%%%%%%%%%%%%%%%
%%%%%%%%%%%%%%%%%%%%%%%%%%%%%%%
\section{Twisted Alexander-Whitney and Eilenberg-Zilber 
maps as chain maps}
\label{sec:chainmaps}
We show in this section that the
twisted Alexander-Whitney and Eilenberg-Zilber maps 
constructed in \cref{sec:AW-EZ} 
for the (unreduced) bar resolutions
are chain maps.
Again, fix algebras $R$ and $S$ and a twisting map
$\tau:S\ot R\rightarrow R\ot S$.
 
\begin{thm}\label{botharechainmaps}
 The twisted Alexander-Whitney map $\AW_{\B}^\tau$
 and the Eilenberg-Zilber map $\EZ_{\B}^\tau$
 for (unreduced) bar resolutions
are both chain maps of $\ttpp$-bimodule complexes.
%  \begin{small}
%$$
%\entrymodifiers={+!!<0pt,\fontdimen22\textfont2>}
%\xymatrixcolsep{6ex}
%\xymatrixrowsep{8ex}
%\xymatrix{
%%top floor
%\cdots
%\ar[r]^{}   
% &  
 %(\B_{R\ott S})_{2}
 %\ar[r]^{d} 
%\ar@<0ex>[d]_{\AWtB }
%&
%(\B_{R\ott S})_{1}
% \ar[r]^{d} 
%\ar@<0ex>[d]_{\AWtB }
%& 
%(\B_{R\ott S})_{0}
%\ar[r]
%\ar@<0ex>[d]_{\AWtB }
%&
%R\ott S
%\ar@{-}[d]
%% \ar@<0ex>[d]
%\ar[r]
%& 
%0
% \\
%%bottom floor
% \cdots
 % \ar[r]^{}
%  & 
%(\B_{R} \ott \B_S )_{2} 
%\ar@<-2ex>[u]_{\ \EZtB }
%\ar[r]_{\del}
%&
%(\B_{R} \ott \B_S )_{1} 
%\ar@<-2ex>[u]_{\ \EZtB }
%\ar[r]_{\del}
%& 
%(\B_{R} \ott \B_S )_{0} 
%\ar@<-2ex>[u]_{\ \EZtB}
%\ar[r]
%& 
%R \ott S
%\ar@{-}@<-1ex>[u]
%\ar[r]
%&
%0
%}
%$$
%\end{small}
\end{thm}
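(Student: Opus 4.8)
The plan is to factor both $\AWtB$ and $\EZtB$ through the intermediate resolution $Y\sd$ of \cref{IntermediateComplex} and verify that each factor is a chain map of $\ttpp$-bimodule complexes. Recall that $(\AWtB)_n=\phi_n\,\varrho_n$, where $\varrho_n\colon\B_{\ttp}\to Y\sd$ is the twisted perfect unshuffle of \eqref{twistedperfectunshufflemap} and $\phi_n\colon Y\sd\to\B_R\ott\B_S$ is the front/back face map of \eqref{frontbackmap}, while $(\EZtB)_n=\varrho_n^{-1}\,\theta_n$, where $\theta_n\colon\B_R\ott\B_S\to Y\sd$ is the shuffle map of \eqref{shufflingsummap}. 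Since a composite of chain maps is a chain map and $\varrho$ is bijective, it is enough to show that (i)~$\varrho$ is a chain map $\B_{\ttp}\to Y\sd$, (ii)~$\phi$ is a chain map $Y\sd\to\B_R\ott\B_S$, and (iii)~$\theta$ is a chain map $\B_R\ott\B_S\to Y\sd$; then $\varrho^{-1}$ is a chain map automatically, and $\AWtB=\phi\,\varrho$ and $\EZtB=\varrho^{-1}\,\theta$ follow. Moreover, each of $\varrho,\varrho^{-1},\phi,\theta$ is already a $\ttpp$-bimodule map, and each of the three differentials in play ($d$ on $\B_{\ttp}$, $\Ydiff\sd$ on $Y\sd$, and $d_{\B_R}\ot 1+1\ot d_{\B_S}$ on $\B_R\ott\B_S$) is a $\ttpp$-bimodule map, so in every case the commutation with differentials need only be checked on a set of free $\ttpp$-bimodule generators, i.e.\ on elements of the form $1\ot(\text{interior tensorands})\ot 1$.

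For (i) we check $\Ydiff_n\,\varrho_n=\varrho_{n-1}\,d_n$ on a generator $1_{\ttp}\ot(r_1\ot s_1)\ot\cdots\ot(r_n\ot s_n)\ot 1_{\ttp}$ of $(\B_{\ttp})_n$. Write the bar differential \eqref{eqn:bar-diff} as $d_n=\sum_{i=0}^n(-1)^i d_n^{(i)}$, where $d_n^{(i)}$ collapses the adjacent tensorands in positions $i$ and $i{+}1$ using the multiplication $(m_R\ot m_S)(1_R\ot\tau\ot 1_S)$ of $\ttp$, and write $\Ydiff_n=\sum_{\ell=0}^n(-1)^\ell\,\Ydiff_n^{(\ell)}$ with $\Ydiff_n^{(\ell)}=1_R^{\ot\ell}\ot m_R\ot 1_R^{\ot(n-\ell)}\ot 1_S^{\ot\ell}\ot m_S\ot 1_S^{\ot(n-\ell)}$. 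The two alternating signs match index by index, so it suffices to prove $\Ydiff_n^{(i)}\,\varrho_n=\varrho_{n-1}\,d_n^{(i)}$ for each fixed $i$. This is a diagram chase: the single copy of $\tau$ produced when $d_n^{(i)}$ multiplies $(r_i\ot s_i)(r_{i+1}\ot s_{i+1})$ is absorbed into the iterated twisting inside $\varrho$, and the resulting identity is exactly the compatibility \eqref{eqn:taumm} of $\tau$ with the multiplications in $R$ and $S$, combined with \cref{twistingcommutesgeneralized} to slide that local twist past the remaining interior twists. The extreme indices $i=0$ and $i=n$ use $\tau(1_S\ot r)=r\ot 1_S$ and $\tau(s\ot 1_R)=1_R\ot s$, so no twist is created there.

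For (ii) and (iii) the decisive observation is that neither differential appearing in $\phi$ or $\theta$ involves $\tau$: the differential $\Ydiff\sd$ is the $\tau$-free alternating sum above, and the differential on $\B_R\ott\B_S$ is the total-complex differential $d_{\B_R}\ot 1+1\ot d_{\B_S}$ with the Koszul sign \eqref{signconvention}, the twisting map entering only through the bimodule structures, which we have already set aside by passing to free generators. Hence, on free generators, the maps $\phi,\theta$ and the differentials $\Ydiff\sd$ and $d_{\B_R}\ot 1+1\ot d_{\B_S}$ are given by precisely the same combinatorial formulas as in the classical untwisted setting, in which $Y\sd$ is the ``diagonal'' complex $R^{\ot\bullet+2}\ot S^{\ot\bullet+2}$, $\phi$ is the Alexander--Whitney front/back face map, and $\theta$ is the Eilenberg--Zilber shuffle map. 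That these are chain maps is the classical Alexander--Whitney--Eilenberg--Zilber theorem for a tensor product of algebras (see \cite{May}, \cite[Chapter~X]{MacLane}, \cite[\S4.2]{Loday}); in particular the signs $(-1)^{\ell(n-\ell)}$ in \eqref{frontbackmap} and \eqref{shufflingsummap} are exactly the Koszul signs making $\phi,\theta$ commute with the total-complex differential. Combining (i)--(iii), the composites $\AWtB=\phi\,\varrho$ and $\EZtB=\varrho^{-1}\,\theta$ are chain maps of $\ttpp$-bimodule complexes.

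The main obstacle is step (i): one must show that the bar differential of $\B_{\ttp}$, built from the twisted multiplication $(m_R\ot m_S)(1\ot\tau\ot 1)$, is transported by the iterated unshuffle $\varrho$ onto the $\tau$-free diagonal differential of $Y\sd$. This is where the hexagon relation \eqref{eqn:taumm} does the real work, and where one must keep careful track of how the ``local'' copy of $\tau$ created by collapsing two adjacent tensorands interacts with the ``global'' iterated twists inside $\varrho$; by contrast, steps (ii) and (iii) are essentially the classical theory applied to complexes from which $\tau$ has already disappeared. (Compare the treatment in \cite{GG}.)
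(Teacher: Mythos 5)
Your proposal is correct and follows essentially the same route as the paper: both factor $\AWtB=\phi\,\varrho$ and $\EZtB=\varrho^{-1}\theta$ through the intermediate complex $Y\sd$, verify the square involving $\varrho$ summand-by-summand via \cref{twistingcommutesgeneralized} and the hexagon relation \eqref{eqn:taumm} (the paper organizes this as an induction on $n$ by tensoring an extra copy of $R\ot S$ on the right or left, but it is the same computation), and dispose of the square involving $\phi$ and $\theta$ by reducing to the classical untwisted Alexander--Whitney/Eilenberg--Zilber theory of Eilenberg--Mac Lane and May. The only cosmetic difference is that the paper phrases the lower square as one appeal to acyclic models rather than your explicit reduction to free bimodule generators, which amounts to the same justification.
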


\begin{proof}
We write $1$ for the identity map on $R$ or $S$.
As $\tau$ is a twisting map (see \cref{TwistingDefDiagram}),
$\AWtB$ and $\EZtB$ commute with the differentials in degree $0$.
% \begin{small}
%$$
%\entrymodifiers={+!!<0pt,\fontdimen22\textfont2>}
%\xymatrixcolsep{9ex}
%\xymatrixrowsep{8ex}
%\xymatrix{
%%top floor
%  (\B_{R\ott S})_{0}=R\ot S \ot R\ot S
 % \ar@<0ex>[d]_{ 1\ot \tau \ot 1 =  (\AWtB)_{0}  }
%\ar[rr]^{\ \hspace{10ex} (m_{_R}\ot\, m_{_S})(1\ot \tau\ot 1)}
%&&
%R\ot S
%\ar[d]^{1_{R\ot S}}
% \\
%%bottom floor
%(\B_{R} \ott \B_S )_{0} = R \ot R \ot S \ot S
%\ar@<-2ex>[u]_{\ (\EZtB)_{0} =1\ot \tau^{-1}\ot 1}
%\ar[rr]_{\ \hspace{10ex} m_{_R}\ot m_{_S}}
%&&
%R \ot S
%}
%$$
%\end{small}%
For degrees $n> 0$, we use the intermediary complex
$Y\sd$ of \cref{sec:AW-EZ} and
argue that the diagram
\begin{small} 
\begin{equation*}
%\label[diagram]{chainmapmaster}
%\begin{equation}
\begin{tikzcd}[row sep=5ex, column sep=5ex, ampersand replacement=\&]
  %Weird ampersand option is so can cut and paste into Beamer and use code
  %top floor 
  \hphantom{A}%left most curved down arrow 
  \arrow[swap, bend right=65, dashed]{dd}{\AWtB}
   \hphantom{A}% space for large curved left up arrow & qq 
   \&[-3em]
   (\B_{R\ott S})_{n}
   \arrow[swap]{d}{\varrho_{n}}
   \arrow{r}{d}
   \& (\B_{R\ott S})_{n-1}
   \arrow[swap]{d}{\varrho_{n-1}}
  \\ 
  \& Y_n=R^{\ot(n+2)}\ot S^{\ot(n+2)}
  \arrow[swap,shift right=2]{u}{\varrho_{n}^{-1} }
  \arrow{r}{\Ydiff}
  \arrow[swap]{d}{\phi_{n}}
  \& 
  R^{\ot(n+1)}\ot S^{\ot(n+1)}=Y_{n-1}
  \arrow[swap,shift right=2]{u}{\varrho_{n-1}^{-1} 
    \hspace{1ex} \substack{\, \text{twisted perfect shuffle (up)}\\
      \quad\ \ \  \text{twisted perfect unshuffle (down)}}}
   \arrow[swap]{d}{\phi_{n-1}}
   \\
  \hphantom{AA}\rule{0ex}{2ex}  % space for large most left up arrow & qq
  \  \arrow[bend left=45, swap, dashed]{uu}{\EZtB}% large curved left up arrow
  \&  (\B_{R} \ott \B_S )_{n} 
  \arrow[swap,shift right=2]{u}{\theta_{n}}
  \arrow{r}{\del}
   \&  (\B_{R} \ott \B_S )_{n-1} 
  \arrow[swap,shift right=2]{u}{\theta_{n-1}
    \hspace{-5.5ex} \substack{\quad  \text{shuffle map (up)}\\
      \text{\qquad \qquad \qquad\,  front/back face map (down)}}}
\end{tikzcd}
%\end{equation}
\end{equation*}
\end{small}%\hspace{-1.5ex}
with only downward arrows
is commutative 
and that the same diagram with only upward arrows is commutative.
The lower square 
is commutative by the traditional
method of acyclic models \cite{EilenbergMacLane},
see~\cite{May}.
For the upper square, % of \cref{chainmapmaster},
we restrict the maps $\Ydiff_n=\sum_{\ell=0}^n \Ydiff_n^{\ell}$ 
and $d_n=\sum_{\ell=0}^n d_n^{\ell}$
to their $\ell$-th summands
and argue that
$\varrho_{n-1}\, d_{n}^\ell = \Ydiff_{n}^\ell \, \varrho_n$
  for $\ell=0,\ldots, n$,
for 
$$
\begin{aligned}
d_n^\ell &=
1_{R\ot S}^{\ot \ell}\ot
((m_{_R}\ot m_{_S}) (1_R\ot\tau\ot 1_S) ) \ot 1_{R\ot S}^{\ot (n-\ell
  )}
\quad\text{ and }
\\
\Ydiff_n^\ell &=  1_R^{\ot \ell} \ot m_{_R} \ot 1_R^{\ot (n-\ell )}
  \ot
  1_S^{\ot \ell} \ot m_{_S} \ot 1_S^{\ot (n-\ell )}
  \, .
  \end{aligned}
  $$
One may verify the case $n=1$, 
$\ell=0,1$ directly using
\cref{twistingcommutesgeneralized}
(see \cref{TwistingDefDiagram}).
Now assume inductively that
$\varrho_{n-1}\, d_{n}^\ell = \Ydiff_{n}^\ell \, \varrho_n$
for fixed $n\geq 1$ and $0\leq \ell \leq n$:
\begin{small} 
\begin{equation}
\label[diagram]{inductivestep}
          \hspace{5ex}
\mbox{
\entrymodifiers={+!!<0pt,\fontdimen22\textfont2>}
\xymatrixcolsep{6ex}
\xymatrixrowsep{6ex}
\xymatrix{
  (R\ot S)^{\ot (n+2)}
  \ar[rrr]^{d_n^\ell}
\ar[d]_{\varrho_n}
&  & &
R^{\ot (n+1)} \ot S^{\ot (n+1)} 
\ar[d]^{\varrho_{n-1}}
& &
\\
R^{\ot (n+2)}\ot S^{\ot (n+2)}
\ar[rrr]^{\Ydiff_n^{\ell}}
& & &
R^{\ot (n+1)}\ot S^{\ot (n+1)}
\, .
& &
}}
\end{equation}
\end{small}%
\hspace{-1ex}
We tensor on the right with one extra copy
of $R\ot S$
and use  \cref{twistingcommutesgeneralized}
to obtain a commutative diagram
verifying the case $n+1$ and $\ell<n+1$.
%\begin{small}
%$$
%  \begin{tikzcd}[row sep=huge, column sep=large, 
%    ampersand replacement=\&]
 % %Weird ampersand option is so can cut and paste into Beamer and use code 
 % %top floor 
 %% \hphantom{A}%left most curved down arrow 
 % \arrow[swap, bend right=35]{dd}{\varrho_{n+1}}
 %%  \hphantom{A}% space for large curved left up arrow & qq 
 % \&[-10ex]
  %  (R\ot S)^{\ot (n+2)}\ot (R\ot S) 
  % \arrow[swap]{d}{\varrho_n \ot 1\ot 1}
  % \arrow{rr}{d_{n+1}^\ell=\, d_n^\ell \ot 1\ot 1}
  % \&\&
  % R^{\ot (n+1)} \ot S^{\ot (n+1)} \ot (R\ot S)  
  % \arrow{d}{\varrho_{n-1} \ot 1\ot 1}
  % \&[-10ex]
   % \arrow[bend left=35]{dd}{\varrho_{n}}
  %\\ 
  %\&
  %R^{\ot (n+2)}\ot S^{\ot (n+2)}\ot (R\ot S)  
  % \arrow{rr}{\Ydiff_n^{\ell} \ot 1\ot 1}
  % \arrow[swap]{d}{1^{\ot(n+2)}\ot \tau^{}_{\B_S}
   % \ot 1}
 % \&\&
 % R^{\ot (n+1)}\ot S^{\ot (n+1)}\ot (R\ot S)  
 % \arrow{d}{1^{\ot(n+1)}\ot  \tau^{}_{\B_S}\ot 1}
 % \&
  % \\
 %\hphantom{.}  % \hphantom{AA}\rule{0ex}{2ex}  % space for large most left up arrow& qq 
 % \&
 %R^{\ot (n+3)}\ot S^{\ot (n+3)}
%\arrow{rr}{\Ydiff_{n+1}^\ell}
%\&\&
%R^{\ot (n+2)}\ot S^{\ot (n+2)}
 %  \&
  % \hphantom{.}
 %\end{tikzcd}
%$$
%\end{small}%
%%
We similarly verify the case $\ell=n+1$ by
tensoring \cref{inductivestep} 
with one copy of
$R\ot S$ on the left instead.
Hence
$\varrho\, d = \Ydiff \, \varrho$.
Since $\varrho$ is invertible,
$d\, \varrho^{-1} = \varrho^{-1}\, \del$
and thus
$\AWtB$ 
and $\EZtB$ 
are chain maps.
\end{proof}

%%%%%%%%%%%%%%%%%%%%%%%%%
%%%%%%%%%%%%%%%%%%%%%%%%%
%%%%%%%%%%%%%%%%%%%%%%%%%%%%%%%%%%%%%%%%%%
%%%%%%%%%%%%%%%%%%%%%%%%%%%%%%%%%%%%%%%%%%
\section{
 Twisted AW and EZ
maps split the reduced bar resolution}
\label{sec:chainmapsonreduced}

We now use the 
twisted Alexander-Whitney map $\AWtB$
and
Eilenberg-Zilber map $\EZtB$
for unreduced bar resolutions constructed in \cref{sec:AW-EZ}
to define
chain maps $\AWt$ and $\EZt$ for reduced bar resolutions.
We show the composition $\AWt\, \EZt$
is the identity in this setting, providing
a splitting of the reduced
bar resolution $\BB_{R\ott S}$ for a twisted tensor product
$R\ott S$
with a copy of the resolution $\BB_R\ott \BB_S$
as a direct summand.
Compare with \cite[Theorem 1.5]{GG}.

%%%%%%%%%%%%%%%%%%%%%%%%%%%%%%%
\begin{thm}\label{AW-EZforreducedbar}
  For any $k$-algebras $R$ and $S$ and twisting map
  $\tau:S\ot R\rightarrow R\ot S$,
there exist
   $\ttpp$-bimodule
  chain maps for reduced bar complexes
\begin{equation*} 
  \begin{aligned}
\AWt:\ \ \BB_{\ttps}
\longrightarrow \BB_R\ot_{\tau} \, \BB_S \, 
\quad\text{ and }\quad 
\EZt:\ \ 
\BB_R\ot_{\tau} \, \BB_S \longrightarrow\BB_{\ttps}
\ 
\end{aligned}
\end{equation*}
satisfying $\BAW^\tau \, \BEZ^\tau=1$,
 the identity map.
%Thus $\BB_R\ott \BB_S$ injects to a direct summand 
 % of $\BB_{R\ott S}$: 
 % \begin{equation}
 % \begin{aligned}
%\xymatrix@C=2cm{
 % \BB_{R} \otimes_{\tau} \BB_S 
 %  \ar@/_1.75pc/[rr]_{1} %Number here indicate curve amount, subscript means curve b%elow, superscript means curve above 
%  \ar[r]^{\EZt}
 % &
 % \BB_{\ttp}
 % \ar[r]^{\AWt}
 % & \BB_{R} \otimes_{\tau} \BB_S 
%}
%\, . 
%\end{aligned}
%\end{equation}
\end{thm}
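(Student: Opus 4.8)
The plan is to pass through the unreduced bar resolutions, where \cref{botharechainmaps} already provides chain maps $\AWtB$ and $\EZtB$, and then transfer to the reduced complexes using the embeddings $\inc^{}_{\BB_R}, \inc^{}_{\BB_S}, \inc^{}_{\BB_{R\ott S}}$ and projections $\pr^{}_{\BB_R}, \pr^{}_{\BB_S}, \pr^{}_{\BB_{R\ott S}}$ of \cref{embeddings}. Concretely, I would \emph{define}
\begin{equation*}
\AWt = (\pr^{}_{\BB_R}\ott \pr^{}_{\BB_S})\,\AWtB\,\inc^{}_{\BB_{R\ott S}}
\qquad\text{and}\qquad
\EZt = \pr^{}_{\BB_{R\ott S}}\,\EZtB\,(\inc^{}_{\BB_R}\ott \inc^{}_{\BB_S})\, .
\end{equation*}
First I would check that $\AWt$ is a chain map: as in the proof of \cref{BarCompatible}, one inserts $1 = \inc\,\pr + (1-\inc\,\pr)$ between $\AWtB$ and the unreduced differential, uses $d^{}_{\BB} = \pr\, d^{}_{\B}\,\inc$ together with \cref{DifferentialReduced} and \cref{ProjectD}, and argues that the error term vanishes because $\ker\pr^{}_{\BB}$ (pure tensors with a scalar in some inner slot) is carried by the relevant maps back into $\ker\pr^{}_{\BB}$. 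The analogous bookkeeping handles $\EZt$. The bimodule-map property is inherited from $\AWtB$, $\EZtB$ together with the fact that $\pr$ and $\inc$ are bimodule maps.

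The heart of the theorem is the identity $\AWt\,\EZt = 1$. I would first recall from the construction in \cref{sec:AW-EZ} that $(\AWtB)_n = \phi_n\,\varrho_n$ and $(\EZtB)_n = \varrho_n^{-1}\,\theta_n$, so on the unreduced level
\begin{equation*}
\AWtB\,\EZtB = \phi\,\varrho\,\varrho^{-1}\,\theta = \phi\,\theta\, .
\end{equation*}
The key computation is then that $\phi_n\,\theta_n$ equals the identity on $(\B_R\ott\B_S)_n$ \emph{after} composing with $\pr^{}_{\BB_R}\ott\pr^{}_{\BB_S}$ and restricting along $\inc^{}_{\BB_R}\ott\inc^{}_{\BB_S}$. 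This is the twisted avatar of the classical fact $\AW\circ\EZ = 1$: in $\phi_{n-\ell,\ell}\,\theta_{\ell,n-\ell}$ one shuffles $r_1\otimes\cdots\otimes r_\ell$ against $n-\ell$ copies of $1_R$ (and dually $1_S$'s against $s_{\ell+1}\otimes\cdots\otimes s_n$), then the front/back face map multiplies the first $\ell$ $R$-entries together and the last $\ell$ $S$-entries together. Every shuffle permutation other than the identity produces a pure tensor with some $1_R$ (or $1_S$) in an inner slot — exactly an element of $\ker\pr^{}_{\BB_R}\ot\cdots$ — hence dies after projecting to the reduced complex; the surviving identity-shuffle term returns the original generator. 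One also checks the cross terms $\phi_{n-\ell,\ell}\,\theta_{\ell',n-\ell'}$ with $\ell\neq\ell'$ vanish identically by degree reasons, and that the signs $(-1)^{\ell(n-\ell)}$ on both sides combine to $+1$. This reproduces the mechanism of \cite[Theorem 1.5]{GG}; I would cite \cite{May} for the untwisted input and note that the twisting maps $\tau$ only permute $R$- and $S$-entries past each other and so interact harmlessly with the projections (since $\tau(1_S\otimes r)=r\otimes 1_S$ and $\tau(s\otimes 1_R)=1_R\otimes s$).

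The main obstacle I anticipate is \emph{not} the combinatorics of $\phi\,\theta=1$ — that is essentially classical — but rather the care needed in the reduction step: one must be sure that $\varrho,\varrho^{-1},\phi,\theta$ all respect the relevant kernels and images of the projection maps, so that "$\AWtB\,\EZtB = 1$ after projection" is actually the correct reduced statement and not merely true modulo an uncontrolled term. The cleanest route is to prove once and for all (mirroring the computation in the proof of \cref{BarCompatible}) that iterated twisting and the shuffle/face maps preserve the span of tensors with an inner scalar component, i.e. $\ker\pr^{}_{\BB}$; with that lemma in hand, the identity $\AWt\,\EZt=1$ follows by the projection formula above, and the chain-map verifications for $\AWt$ and $\EZt$ reduce to the same one-line manipulations using \cref{DifferentialReduced} and \cref{ProjectD}.
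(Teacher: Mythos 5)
Your proposal follows essentially the same route as the paper: define $\AWt=\pr_2\,\AWtB\,\inc_1$ and $\EZt=\pr_1\,\EZtB\,\inc_2$, prove that the unreduced maps carry the kernels of the projections into each other (the paper's Eqs.~(\ref{EZOnComplement}) and~(\ref{AWOnComplement})), and then deduce the chain-map property, the reduction of $\AWt\,\EZt$ to $\pr_2\,\phi\,\theta\,\inc_2$, and the final shuffle count in which only the identity shuffle with matching $\ell$ survives projection. The one inaccuracy is your claim that the bimodule property of $\EZt$ follows because ``$\pr$ and $\inc$ are bimodule maps'': the inclusion $\inc_2=\inc^{}_{\BB_R}\ot\inc^{}_{\BB_S}$ is \emph{not} an $\ttpp$-bimodule map in general (the reduced twisted action is $\pr_2\,\rho_{_\B}\,(1\ot\inc_2\ot 1)$, not the restriction of $\rho_{_\B}$), and the paper devotes a separate step to showing $\EZt$ is nonetheless a bimodule homomorphism --- using exactly the vanishing of $\pr_1\,\EZtB$ on $\ker\pr_2$ that you propose as your key lemma. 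So the gap is repairable by your own mechanism, but as written that sentence is false and the bimodule verification for $\EZt$ needs its own argument.
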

\begin{proof}
  Recall that we fixe 
  arbitrary choices of sections $\bar R\hookrightarrow R$ and $\bar S\hookrightarrow S$ 
of the vector space projections 
  $R\twoheadrightarrow \bar{R}=R / k1_R$ and 
$S\twoheadrightarrow \bar{S}=S / k1_R$ 
to embed $\bar{R}$ in $R$ and $\bar{S}$ in $S$, respectively, and to 
embed each bimodule component of the 
reduced into the unreduced bar complex
(see \cref{embeddings}):  
$$(\BB_R)_n \subset (\B_R)_n , 
\quad 
(\BB_S)_n \subset (\B_S)_n , 
\quad
(\BB_{R\ott S})_n \subset (\B_{R\ott S})_n \
\, .
$$
We extend to inclusion maps 
$$\inc_1 : (\BB_{R\ott S})_n \hooklongrightarrow (\B_{R\ott S})_n
\qquad\text{ and }\qquad 
\inc_2: 
(\BB_R\ott\BB_S)_n \hooklongrightarrow (\B_R\ott\B_S)_n 
$$ 
and  projection maps (induced componentwise)
$$
\pr_1
:\B_{R\ott S}\longtwoheadrightarrow \BB_{R\ott S}
\qquad\text{ and }\qquad 
\pr_2
:\B_R\ott\B_S\longtwoheadrightarrow \BB_R\ott\BB_S
\, .
$$
Note that $\pr_1$ 
applies the projection
$R\ot S \twoheadrightarrow
\overline{R\ot S}=
(\bar{R}\ot \bar{S})\oplus 
(\bar{R}\ot k1_S)\oplus (k1_R\ot \bar{S})
\, $ to middle tensor components
and that $\pr_2=\pr^{}_{\BB_R}\ot \pr^{}_{\BB_S}$,
see \cref{ReducedTwistedTensorProduct} and \cref{DifferentialReduced}.
By definition of the action,
$\pr_1$ and $\inc_1$ are $\ttpp$-bimodule homomorphisms
with $\pr_1\, \inc_1=1$, the identity map.
Also note that $\pr_2$ is an $\ttpp$-bimodule homomorphism
since $\tau$ is the swap (interchange) map on $R\ot 1_S$ and $1_R\ot 
S$, but $\inc_2$ may not be: the function
$\inc_2=\inc^{}_{\BB_R}\ot \inc^{}_{\BB_S}$ is merely a $k$-vector space map
with $\pr_2\,  \inc_2=1$.

\subsection*{Defining $\AWt$ and $\EZt$
  in terms of inclusion and projection}
 We define 
  $$\ \AWt:\BB_{R \ott S} \longrightarrow  \BB_R\ott \, \BB_S
\qquad\text{ by }   \AWt=\AWtBB= \pr_2\ \phi\ \varrho\ \inc_1\, ,
$$
  for twisted perfect unshuffle map $\varrho$ 
of~(\ref{twistedperfectunshufflemap})
and front/back face map  $\phi$ of \cref{frontbackmap},
and
$$ \ \ \ \EZt: \BB_R\ott \, \BB_S \longrightarrow\BB_{\ttp}
\quad\  \  \text{ by } \EZt=\EZtBB=\pr_1\ \varrho^{-1}\  \theta\, 
\ \inc_2\, ,
$$
for shuffle map $\theta$ of \cref{shufflingsummap}, 
so that $\AW_{\BB}^{\tau} = \pr_2 \AW_{\B}^{\tau}\inc_1$
and $\EZ^{\tau}_{\BB}=\pr_1\AW^{\tau}_{\B}\inc_2$:
\begin{small}
  $$
\begin{tikzcd}[row sep=large, column sep=10ex, text height=1.5ex, text depth=0.25ex]
  \BB_{\ttps}
 \arrow[hook]{r}{\inc_1}
 \arrow[bend left=15, shift left=1.5ex]{rrrr}{\AWt}
  &    \B_{\ttps}
  \arrow[shift left=0ex]{r}{\  \varrho}
   \arrow[two heads, shift left=1.5ex]{l}{\pr_1}
   &[1em]
   Y
  \arrow[shift left=0ex]{r}{\  \phi\ }
   \arrow[shift left=1.5ex]{l}{\,  \varrho^{-1} \ }    
 & \B_R\ott \, \B_S
 \arrow[two heads]{r}{ \pr_2 }
 \arrow[shift left=1.5ex]{l}{\theta}
 & \BB_R\ott \, \BB_S
 \arrow[hook, shift left=1.5ex]{l}{\ \inc_2}
 \arrow[bend left=15, shift left=3ex]{llll}{{\EZt}}
 \ . 
 \end{tikzcd}
 $$
\end{small}%
The map $\AWt$ is an $(\ttp)$-bimodule homomorphism
since $\pr_2$, $\phi$, $\varrho$, and $\inc_1$
are all $(\ttp)$-bimodule maps.
We will verify that $\EZt$ is an $(\ttp)$-bimodule homomorphism.

\subsection*{Complements to the reduced versions of resolutions}
We first argue that $\pr_1\, \EZtB$ is zero on
$\ker\pr_2\subset \B_R\ott \B_S$,
a vector space complement to $\BB_R\ott \BB_S$.
For fixed degree $n$, consider the map $\pr_1\, \varrho^{-1}\, \theta$
on $(\B_R\ott \B_S)_n$.
The shuffle map $\theta$ (see \cref{shufflingsummap})
is a direct sum over $0\leq \ell\leq n$
of maps
taking an  element  
\begin{equation}\label{eta}
  z=1_R\ot r_1\ot\cdots\ot r_{\ell}\ot 1_R \ot 
  1_S\ot s_{\ell+1}\ot\cdots \ot s_{n}\ot 1_S 
  \quad\text{
    for $r_i\in R$ and $s_i\in S$}
  \end{equation}
  to a signed sum over shuffles
  $\sigma^{-1}\in \mathfrak{S}_{\ell, n-\ell}\subset \mathfrak{S}_n$
of terms of the form
  \begin{equation}
    \label{oneshuffle}
  \begin{aligned}
  \theta_{\sigma^{-1}}(z) &=1_R\ot \, \sigma^{-1}(r_1\ot\cdots\ot r_n) \ot 1_R 
  \ot  1_S\ot \sigma^{-1}(s_1\ot\cdots\ot s_n)\ot 1_S
  \\
  &= 1_R\ot r_{\sigma(1)}\ot\cdots\ot r_{\sigma(n)}\ot 1_R \ot 
  1_S\ot s_{\sigma(1)} \ot\cdots\ot s_{\sigma(n)}\ot 1_S
  \, ,
\end{aligned}
\end{equation}
where we set $r_{\ell+1}=\cdots = r_n=1_R$ and 
$s_1=\cdots = s_{\ell}=1_S$.
Notice that for each $i$ and $\sigma$, we have 
$r_{\sigma(i)}=1_R$ or $s_{\sigma(i)}=1_S$.
As $\tau$ is just the swap (interchange) map on $R\ot 1_S$ and $1_R\ot S$,
the image of such an element under the twisted unshuffling map
$\varrho^{-1}$
lies in the $k$-span of elements of the form
\begin{equation}  \label{InTheSpan}
  1_{\ttp}\ot (r_1'\ot s_1') \ot \cdots\ot (r_n' \ot s_n')\ot  1_{\ttp}
\qquad\text{ in } \B_{\ttp}
\end{equation}
 with $(r_i'\ot s_i')$ in $R\ott S$
and
 $r_i'=1_R$ or $s_i' = 1_S$ for each $i$.
 
If our original element $z$ of \cref{eta}
lies in $\ker\pr_2\subset \B_R\ott \B_S$, then
$r_m$ lies in $k1_R$ for some $m\in \{1,\ldots,\ell\}$
or
$s_m$ lies in $k1_S$ for some $m\in \{\ell+1\,\ldots,n\}$.
Hence for any $\sigma^{-1}$ in $\mathfrak{S}_{\ell, n-\ell}$,
there is some $m\in \{1,\ldots,n\}$ with both 
$r_{\sigma(m)}$ and $s_{\sigma(m)}$ scalars,
  and hence $\varrho^{-1}\theta(z)$
  lies in the $k$-span of elements of the form
  (\ref{InTheSpan}) with $r_m'\ot s_m'=1_R\ot 1_S$.
Thus $\pr_2\, \varrho^{-1}\, \theta(z)=0$.

    As $\pr_1$, $\varrho^{-1}$, and $\theta$
  are $\ttpp$-bimodule maps,
and pure tensors of the form $z$ in~Eq.~(\ref{eta}) lying in $\Ker \pr_2$
generate $\ker\pr_2$ as an $\ttpp$-bimodule,
\begin{equation}\label{EZOnComplement}
 \pr_1\,\EZtB= \pr_1\,\varrho^{-1}\,\theta\equiv 0
\qquad\text{ on $\ker \pr_2=\ima (1-\inc_2\pr_2) \subset \B_R\ott \B_S$.}
\end{equation}

We likewise argue that
\begin{equation}\label{AWOnComplement}
  \pr_2\, \AWtB=
  \pr_2\, \phi\, \varrho\equiv 0 
  \qquad\quad\text{  on $\ker \pr_1=\ima (1-\inc_1\pr_1)\subset \B_{\ttp}$,}
  \end{equation}
i.e., $\pr_2\, \AWtB$
vanishes on
a vector space complement to $ \BB_{\ttp}$.
Indeed, say  
$$
z=(1_{R}\ot 1_S)\ot (r_1\ot s_1)\ot\cdots\ot
(r_n\ot s_n)\ot (1_R\ot 1_S)
\quad\text{
    for $r_i\in R$ and $s_i\in S$}
$$
lies in $\ker\pr_1$ with $(r_i\ot s_i)$ in $\ttp$ for all $i$,
say
$r_m\ot s_m=1_R\ot 1_S$
for some fixed $m$.
Then $\varrho$ takes $z$ to a sum of elements of the form
\begin{equation}
  z'=1_R\ot r_1'\ot\cdots\ot r_{n}'\ot 1_R \ot 
  1_S\ot s_1'\ot\cdots \ot s_{n}'\ot 1_S  
  \end{equation}
  with $r_m'=r_m=1_R$ and $s_m'=s_m=1_S$.
  The map $\phi$ in turn takes $z'$ to a sum over $1\leq\ell\leq n$
of  elements of the form
  $$
(r_1'\cdots r_{\ell}' ) \ot r_{\ell+1}' \ot\cdots\ot r_n'
  \ot 1_R\ot 1_S\ot
  s_1'\ot \cdots\ot s_{\ell}'\ot (s_{\ell+1}'\cdots s_n')
 $$
which projects to zero under
$\pr_2$
since
among the inner tensor components
(ignoring $1_R\ot 1_S$ in the 
middle) is found either
$s_m'=1_S$  (when
$m\leq \ell$) or $r_m'=1_R$
(when $m>\ell$).
The claim \cref{AWOnComplement} then follows
as pure tensors of the form $z$ generate $\ker\pr_1$ as an $\ttpp$-bimodule 
and $\pr_2$, $\phi$, and $\varrho$ are $\ttpp$-bimodule maps.

\subsection*{$\EZt$ is a bimodule map}
To see that
$\EZtBB$ is an $(\ttp)$-bimodule homomorphism
even though $\inc_2$
may not be,
let $\rho_{_\BB}$, $\rho_{_\B}$, and $\rho'$ denote
the maps recording the bimodule action of $\ttp$
on $\BB_R\ott\BB_S$, $\B_R\ott \B_S$, and $\BB_{\ttp}$,
respectively, noting that
$\rho_{_{\BB}}=\pr_2\,\rho_{_{\B}}\, (1\ot \inc_2\ot 1)$
on $\ttpp\ot (\BB_R\ott\BB_S)\ot \ttpp$
by \cref{bimodstructure} and \cref{iterativetwistingreduced}.
Then by
\cref{EZOnComplement},
$$\begin{aligned}
  \EZtBB \, \rho_{_\BB}
  &= (\pr_1 \, \EZtB\, \inc_2)\, (\pr_2\,\rho_{_{\B}})\, (1\ot \inc_2\ot 1)   
  %\\   &=
 % \pr_1\, \EZtB\, (1+\inc_2\, \pr_2-1)\,\rho_{_\B}\, (1\ot\inc_2\ot 1)
  \\   &=
  \pr_1\, \EZtB\,\rho_{_{\B}}\, (1\ot \inc_2 \ot 1)
  +
  \pr_1\, \EZtB\, (\inc_2\, \pr_2-1)\,\rho_{_\B}\, (1\ot \inc_2\ot 1)
    \\   &=
    \rho'\ (1\ot \pr_1\, \EZtB \ot 1) \, (1\ot \inc_2\ot 1)
   %   \\   &=
   % \rho'\ (1\ot \pr_1\, \EZtB \, \inc_2\ot 1)
 \\   &=
\rho'\, (1\ot \EZtBB\ot\, 1)
\end{aligned}
$$
as $\pr_1$ and $\EZtB$ are $\ttpp$-bimodule  
maps.
Thus $\EZtBB$ is an $\ttpp$-bimodule map.

\subsection*{Chain maps}
We argue that $\AWtBB$ and $\EZtBB$ 
are chain maps 
using the fact that (the unreduced versions) $\AWtB$ and $\AWtB$ are 
chain maps by \cref{botharechainmaps}.
Consider the differentials
$d^{}_{\BB}$, $d^{}_{\B}$, $\del^{}_{\BB}$, and $\del^{}_{\B}$ on
$\BB_{\ttp}$, $\B_{\ttp}$, $\BB_R\ott\BB_S$,
and $\B_R\ott\B_S$, respectively.
We again write $1$ for the identity map with domain given by context:
$$
\begin{aligned}
\EZtBB\, \del^{}_{\BB}
=
(\pr_1\, \EZtB\, \inc_2)\, (\pr_2\, \del^{}_{\B}\, \inc_2)
%\\ &=
%\pr_1\, \EZtB\, (1+\inc_2\, \pr_2-1)\, \del^{}_{\B}\, \inc_2
=
\pr_1\, \EZtB\, \del^{}_{\B}\, \inc_2 
+
\pr_1\, \EZtB\, (\inc_2\, \pr_2-1)\, \del^{}_{\B}\, \inc_2
\, .
\end{aligned}
$$
The second summand is 
zero by \cref{EZOnComplement},
and  \cref{botharechainmaps} and \cref{ProjectD} then imply that 
$$
\begin{aligned}
 \EZtBB\, \del^{}_{\BB}
 &=
\pr_1\, d^{}_{\B}\, \EZtB\, \inc_2 
%\\ &=
%\pr_1\, d^{}_{\B}\, (\inc_1\, \pr_1 + 1 - \inc_1\, \pr_1)\, \EZtB\,
%\inc_2
\\ &=
(\pr_1\, d^{}_{\B}\, \inc_1)\, (\pr_1\, \EZtB\, 
\inc_2) 
+
\pr_1\, d^{}_{\B}\, (1 - \inc_1\, \pr_1)\, \EZtB\, \inc_2
 &=\ 
d^{}_{\BB}\,  \EZtBB
\, .
\end{aligned}
$$
Thus $\EZt$ is a chain map.
Likewise,
one may use \cref{botharechainmaps} and \cref{ProjectD}
to show that $\AWt$ is a chain map,
as $\del^{}_{\BB}=\pr_2 \,\del^{}_{\B}\,\inc_2
% = \pr_2 (\del^{}_{\B_R}\ot 1 + 1\ot\del^{}_{\B_S}) \inc_2
\equiv 0$ on 
$\ker \pr_2=\Ima (\inc_2\,\pr_2 - 1) $ by 
\cref{AWOnComplement}. 
 
\subsection*{The composition is the identity}
Lastly, we argue that $\AWtBB\, \EZtBB= 1$.
By \cref{AWOnComplement},
$$
\begin{aligned}
  \AWtBB\ \EZtBB
  &=   
  (\pr_2\, \AWtB\, \inc_1)\ (\pr_1\, \EZtB\, \inc_2)
%\\  &=
%\pr_2\, \AWtB\, (1+\inc_1\, \pr_1-1)\, \EZtB\, \inc_2
\\  &=
\pr_2\, \AWtB\, \EZtB\, \inc_2 
+
\pr_2\, \AWtB\, (\inc_1\, \pr_1-1)\, \EZtB\, \inc_2
\\  &=
\pr_2\, (\phi\, \varrho)\, (\varrho^{-1}\, \theta)\, \inc_2
\\ &=
\pr_2\, \phi\, \theta\, \inc_2
\, .
\end{aligned}
$$
We show directly that $\pr_2\, \phi\, \theta\, \inc_2$ is
the identity map on $(\BB_R\ott\BB_S)_n$ for each $n$.
This map is a signed sum over
$1\leq \ell', \ell \leq n$
and over
$(\ell, n-\ell)$-shuffles
 $\sigma^{-1}$ 
 of maps
 taking
  $$
  z=1_R\ot r_1\ot\cdots\ot r_{\ell}\ot 1_R \ot
  1_S\ot s_{\ell+1}\ot\cdots \ot s_{n}\ot 1_S
  \quad\text{
    for $r_i\in \bar{R}$ and $s_i\in \bar{S}$}
  $$
in $(\BB_{R} \ott \BB_S )_n$
 to
  $$(r_{\sigma(1)}\cdots r_{\sigma(\ell')})\ot r_{\sigma(\ell'+1)}\ot \cdots \ot r_{\sigma(n)}\ot 1_R
  \ot 1_S\ot s_{\sigma(1)}\ot \cdots\ot s_{\sigma(\ell')} \ot (s_{\sigma(\ell'+1)}\cdots
    s_{\sigma(n)})\, ,
    $$
    where again we set $r_{\ell+1}=\cdots = r_n=1_R$ and 
    $s_1=\cdots = s_{\ell}=1_S$ (see \cref{oneshuffle} and 
\cref{frontbackmap}).
This expression projects to zero under $\pr_2$ 
unless $\ell'=\ell$ and 
  $\theta_{\sigma^{-1}}$
  takes $z$
    to
  $$1_R\ot\cdots\ot 1_R\ot r_1\ot\cdots\ot r_{\ell}\ot 1_R \ot
  1_S\ot s_{\ell+1}\ot\cdots \ot s_{n}\ot 1_S\ot \cdots\ot 1_S
  $$
 in $R^{\ot n}\ot S^{\ot n}$, in which case the expression is just $z$
 again.
  There is only one such shuffle $\sigma$
  for fixed $\ell$ with this property, and its 
  sign is $(-1)^{\ell(n-\ell)}$ as required to imply that
  $\pr_2 \, \phi \, \theta\, \inc_2$ fixes $z$.
  Then as pure tensors of the form $z$ generate 
  $(\BB_R\ott \BB_S)_n$ as an $\ttpp$-bimodule,
  the composition $\AWtBB\, \EZtBB$ is the
 identity map on $ (\BB_R\ott \BB_S)_n$
 for each $n$.
   \end{proof}

   \vspace{2ex}

   %%%%%%%%%%%%%%%%%%%%%%%%%%%%%%%%%%%%%%
  \begin{example}\label{LieAlgebraExample}{\em
      Any nonabelian $2$-dimensional Lie algebra
      $\mathfrak{g}$ has universal enveloping algebra
     $\mathcal{U}(\mathfrak{g})$ isomorphic to 
      $$A=k\langle x, y: yx-xy-x \rangle
      \cong R\ott S
      $$
      for $R=k[x]$, $S=k[y]$, and twisting map
      $\tau: S\ot R \rightarrow R\ot S $
      defined by $y\ot x \mapsto x\ot y + x\ot 1$.
      We identify $\bar R$, $\bar S$ with the $k$-span
      of the elements of positive degree in $k[x]$, $k[y]$,
      respectively,
      for $x$ and $y$ in degree $1$.
      We likewise identify $\bar A$ with the $k\text{-span}$
      of elements of positive degree under the vector space
      isomorphism $A\cong k[x,y]$ given by the
      Poincar\'e-Birkhoff-Witt Theorem.
                     Set
            $$
            \begin{aligned}
      a&=\ \ \ 1_A\ot y^2\ot x\ot x\ot 1_A , 
      \\
      b&=\ \ \  1_R\ot x\ot x \ot 1_R\ot 1_S \ot y^2\ot 1_S
      +4\cdot 1_R\ot x\ot x \ot 1_R\ot 1_S \ot y\ot 1_S,
         \\
         c&=\ \ \ 
         1_A\ot x\ot x\ot y^2\ot 1_A + 4\cdot 1_A\ot x\ot x\ot y\ot 1_A
         + 1_A\ot y^2\ot x\ot x\ot 1_A\\
        & \quad \quad  -1_A\ot x\ot y^2\ot x\ot 1_A
         -2\cdot 1_A\ot x\ot y\ot x\ot 1_A
 \, ,
\end{aligned}
$$
for $a$, $c$ in  $(\BB_A)_3=A\ot \bar A\ot \bar
A\ot \bar A \ot A$
and $b$ in $(\BB_{R})_2\ot (\BB_{S})_1=R\ot \bar R\ot \bar R\ot 
R\ot S\ot \bar S\ot S$. Then
$$\AWt(a)=b,\quad \EZt(b)=c,
\quad\text{ and }\AWt(c)=b
$$
so
$\AWt \, \EZt(b)=b$
but
$
\EZt \, \AWt(a)\neq a$.
}
 \end{example}

 \vspace{2ex}
 
  %%%%%%%%%%%%%%%%%%%%%%%%%%%%%%%%%%%%%%%%%%%
  %%%%%%%%%%%%%%%%%%%%%%%%%%%%%%%%%%%%%%%%%%%
  %%%%%%%%%%%%%%%%%%%%%%%%%%%%%%%%%%%%%%%%%%%
\section{Compatible chain maps}\label{sec:compatiblechainmaps}

Before using the twisted Alexander-Whitney and
Eilenberg-Zilber maps 
to convert between resolutions,
we make precise here the idea of a chain map
preserving a twisting map
$\tau: S\ot R\rightarrow R\ot S$ for $k$-algebras $R$ and $S$.
We show that this compatibility with twisting allows us to extend
chain maps to twisted tensor product resolutions. 

\begin{definition}{\em
    We say a chain map $\psi:C\sd\rightarrow C'\sd$
    of bimodule resolutions $C\sd$, $C'\sd$ of $R$,
    $$ 
\hspace{5ex}
\mbox{
\entrymodifiers={+!!<0pt,\fontdimen22\textfont2>}
\xymatrixcolsep{6ex}
\xymatrixrowsep{6ex}
\xymatrix{
  \cdots \ar[r]
  & 
  C_{1}
\ar[r]%^{\del_{n}}
\ar[d]_{\psi_1}
 &  
 C_{0}  \ar[d]_{\psi_0}
 \ar[r]
&  
R\ar[r]
\ar@{=}[d]% down equal sign
& 
0
\\
\cdots \ar[r]
& 
C'_{1}
\ar[r]%_{\del_{n}' }
%\ar@<-1ex>[u]_{\psi' }  up
 & 
 C'_{0}
 \ar[r]
%\ar@<-1ex>[u]_{\psi' } up
& 
R  \ar[r]
& 
0\ ,
}
}
$$
    {\em is compatible with} the twisting map $\tau$
    if $C\sd$ and $C'\sd$
    are
    both compatible
    with $\tau$
    via chain maps
    $\tau_{_C}: S\ot C\sd\rightarrow C\sd\ot S$
    and     $\tau_{_{C'}}: S\ot C'\sd\rightarrow C'\sd\ot S$
    (see \cref{CompatibleResolutionsDef})
    satisfying
  $$
  \begin{aligned}
    (\psi\otimes 1_S)\, \tau^{}_{_C}
    &=\tau^{}_{_{C'}}\, (1_S\ot \psi)
  &&\quad\text{ as maps }
    S\ot C\sd\longrightarrow C'\sd
    \ot S
   \, .
  \end{aligned}
  $$
 We likewise say a 
 chain map $\psi:D\sd\rightarrow D'\sd$ between bimodule resolutions of $S$
 is compatible with $\tau$
 when it is compatible with $\tau^{-1}$. 
    }%end emphasis 
  \end{definition}

  \vspace{2ex}

\begin{remark}
  \label{compatiblechainmaps}
  {\em
    Note that the right-left distinction here again is cosmetic as $\tau$
is invertible: 
A chain map $\psi:D\sd\rightarrow D'\sd$ between bimodule resolutions of $S$
 is compatible with $\tau$
 precisely when
 $D\sd$ and $D'\sd$ are compatible with $\tau$ via chain maps 
 $\tau_{_D}: D\sd\ot R\rightarrow R\ot D\sd$ and 
 $\tau_{_{D'}}: D'\sd\ot R\rightarrow R\ot D'\sd$ with 
$$
  \begin{aligned}
   (1_R\ot \psi)\, \tau^{}_{_D}
  &=\tau^{}_{_{D'}}\, (\psi\ot 1_R)  
  &&\quad\text{ as maps }
  D\sd\ot R\longrightarrow R\ot D'\sd
  \, . 
  \end{aligned}
  $$
   Thus chain maps $\psi_{_R}:C\sd\rightarrow C'\sd$ and $\psi_{_S}: D\sd\rightarrow D'\sd$
    between compatible resolutions 
    are compatible with the twisting map $\tau$
    when the following diagrams are commutative:
\begin{equation*}\label[diagram]{ChainMapCompatibleDiagram}
\hspace{5ex}
\mbox{
\entrymodifiers={+!!<0pt,\fontdimen22\textfont2>}
\xymatrixcolsep{6ex}
\xymatrixrowsep{6ex}
\xymatrix{
S\ot C\sd
\ar[rr]^{\tau^{}_{_C}}   \ar[d]_{1\ot\,\psi_{_R}}
 &  & 
C\sd\ot S  \ar[d]^{\psi_{_R}\,\ot 1}
&  & 
 \\
S\ot C'\sd
\ar[rr]_{\tau^{}_{C'}}
%\ar@<-1ex>[u]^{1\ot\, \psi_{_R}}  
 & & 
C'\sd\ot S
%\ar@<-1ex>[u]_{\psi_{_R}\,\ot 1} 
 & &
}
}
\hspace{-7ex}
\mbox{
\entrymodifiers={+!!<0pt,\fontdimen22\textfont2>}
\xymatrixcolsep{6ex}
\xymatrixrowsep{6ex}
\xymatrix{
D\sd\ot R
\ar[rr]^{\tau^{}_{_D}}  \ar[d]_{\psi_{_S}\,\ot 1}
 &  & 
R\ot D\sd
\ar[d]^{1\ot\,\psi_{_S}}
&  & 
 \\
D'\sd\ot R
\ar[rr]_{\tau^{}_{_{D'}}}
%\ar@<-1ex>[u]^{\psi_{_S}\ot 1}  
 & & 
R\ot D'\sd
%\ar@<-1ex>[u]_{1\ot\iota_{_S}} 
 \ . &  &
}
}
\end{equation*}
}%end em
\end{remark}

\vspace{2ex}

Compatible chain maps extend to give chain maps on twisted tensor
product resolutions.
%%%%%%%
\begin{lemma}\label{lem:alphaRalphaS}
If
$\psi_{_R}: C\sd\rightarrow C'\sd$ and $\psi_{_S}: D\sd \rightarrow D'\sd$
are chain maps of $R$-bimodule resolutions
of $R$ and $S$, respectively,
 compatible
 with the twisting map $\tau$,
 then
 $$\psi_{_R}\ot \psi_{_S}: C\sd\ott D\sd\longrightarrow C'\sd \ott D'\sd
  $$
is
 a chain map of bimodule resolutions
  of $ R\ott S$:
  $$ 
\hspace{5ex}
\mbox{
\entrymodifiers={+!!<0pt,\fontdimen22\textfont2>}
\xymatrixcolsep{6ex}
\xymatrixrowsep{6ex}
\xymatrix{
  \cdots \ar[r]
  & 
  (C\sd\ott D\sd)_{1}
\ar[r]%^{\del_{n}}
\ar[d]_{\psi_{_R}\ot\, \psi_{_S}}
 &  
 (C\sd\ott D\sd)_{0}  \ar[d]_{\psi_{_R}\ot\, \psi_{_S}}
 \ar[r]
&  
R\ott S \ar[r]
\ar@{=}[d]% down equal sign
& 
0
\\
\cdots \ar[r]
& 
(C'\sd\ott D'\sd)_{1}
\ar[r]%_{\del_{n}' }
%\ar@<-1ex>[u]_{\psi' }  up
 & 
 (C'\sd\ott D'\sd)_{0}
 \ar[r]
%\ar@<-1ex>[u]_{\psi' } up
& 
R\ott S  \ar[r]
& 
0
\, .
}
}
$$
  \end{lemma}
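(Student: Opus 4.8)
The plan is to verify the three things that make $\psi_{_R}\ot\psi_{_S}$ a chain map of bimodule resolutions of $R\ott S$: that it commutes with the differentials, that it commutes with the $(R\ott S)$-bimodule action, and that it lifts $1_{R\ott S}$. First note that $C'\sd\ott D'\sd$ is a genuine bimodule resolution of $R\ott S$ (by \cite[\S3]{SW-twisted}, as recalled around \cref{twistedproductresolution}), since the notion of a compatible chain map, hence the hypothesis, requires $C'\sd$ and $D'\sd$ themselves to be compatible with $\tau$ via chain maps $\tau^{}_{_{C'}}$ and $\tau^{}_{_{D'}}$.

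For the differentials, recall that $C\sd\ott D\sd$ carries the total differential $\del^{}_{C}\ot 1 + 1\ot\del^{}_{D}$ of \cref{cx-X}, and likewise for $C'\sd\ott D'\sd$. Since $\psi_{_R}$ and $\psi_{_S}$ are degree-preserving, transposing either of them past $1$, $\del^{}_{C}$, or $\del^{}_{D}$ introduces no Koszul sign (see \cref{signconvention}), so
$$
(\del^{}_{C'}\ot 1 + 1\ot\del^{}_{D'})(\psi_{_R}\ot\psi_{_S})
= (\del^{}_{C'}\psi_{_R})\ot\psi_{_S} + \psi_{_R}\ot(\del^{}_{D'}\psi_{_S})
= (\psi_{_R}\ot\psi_{_S})(\del^{}_{C}\ot 1 + 1\ot\del^{}_{D}),
$$
using only that $\psi_{_R}$ and $\psi_{_S}$ are chain maps. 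In degree $-1$, the augmentation $X_0\rightarrow R\ott S$ is induced by those of $C\sd$ and $D\sd$ together with the identification $R\ot S = R\ott S$; as $\psi_{_R}$ and $\psi_{_S}$ lift $1_R$ and $1_S$, the composite $\psi_{_R}\ot\psi_{_S}$ is compatible with these augmentations, so it lifts $1_{R\ott S}$.

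The substantive step is to show $\psi_{_R}\ot\psi_{_S}$ intertwines the two $(R\ott S)$-bimodule actions, each given by the composition in \cref{bimodstructure}. I would precompose the action on $C'\sd\ott D'\sd$ with $1\ot(\psi_{_R}\ot\psi_{_S})\ot 1$ and push $\psi_{_R}$ and $\psi_{_S}$ leftward through the three stages of \cref{bimodstructure}: past $1\ot\tau^{}_{_{C'}}\ot\tau^{}_{_{D'}}\ot 1$ using the compatibility identities $(\psi_{_R}\ot 1_S)\,\tau^{}_{_C} = \tau^{}_{_{C'}}(1_S\ot\psi_{_R})$ and $(1_R\ot\psi_{_S})\,\tau^{}_{_D} = \tau^{}_{_{D'}}(\psi_{_S}\ot 1_R)$ of \cref{compatiblechainmaps}; past the inner $1\ot 1\ot\tau\ot 1\ot 1$ trivially, since that copy of $\tau$ acts only on the tensor factors from $R$ and $S$ and not on the $C$- or $D$-components; and finally past $\rho_{_{C'}}\ot\rho_{_{D'}}$ using that $\psi_{_R}$ and $\psi_{_S}$, being maps of bimodule complexes, commute with the bimodule-structure maps, i.e.\ $\psi_{_R}\,\rho_{_{C}} = \rho_{_{C'}}(1\ot\psi_{_R}\ot 1)$ and $\psi_{_S}\,\rho_{_{D}} = \rho_{_{D'}}(1\ot\psi_{_S}\ot 1)$. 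This exhibits the composite as $(\psi_{_R}\ot\psi_{_S})$ applied after the action on $C\sd\ott D\sd$, which is exactly the bimodule-map property.

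The only real obstacle is bookkeeping: keeping track of which tensor factor each map in \cref{bimodstructure} acts on while threading $\psi_{_R}$ and $\psi_{_S}$ through the composition (the Koszul signs all vanish because $\psi_{_R},\psi_{_S}$ have degree $0$). Once the two $\tau$-compatibility identities, the irrelevance of the inner $\tau$ to the $\psi$-components, and the bimodule-map property of $\psi_{_R},\psi_{_S}$ are matched to the three stages of the action, the verification is a routine diagram chase. Note that projectivity of the terms $X_n$ is never invoked; the conclusion asserts only that $\psi_{_R}\ot\psi_{_S}$ is a chain map of bimodule resolutions of $R\ott S$.
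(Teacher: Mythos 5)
Your proposal is correct and follows essentially the same route as the paper: the three stages through which you thread $\psi_{_R}$ and $\psi_{_S}$ (the $\tau$-compatibility identities, the inner $\tau$ acting on disjoint tensor factors, and the bimodule-homomorphism property) are exactly the three commutative squares in the paper's diagram, and the paper likewise dispatches the differentials by noting they are the usual total differentials.
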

\begin{proof}
Say $C\sd$, $C'\sd$, $D\sd$, and $D'\sd$ 
are compatible with $\tau$ via chain  
maps $\tau_{_C}$, $\tau_{_{C'}}$, $\tau_{_D}$, and $\tau_{_{D'}}$, respectively,
with the diagrams of \cref{compatiblechainmaps} commutative.
 The following diagram gives the bimodule action of
$\ttp$ before and after application
of the map $\psi_{_R}\ot \psi_{_S}$,
where again we use maps $\rho_{_C}$, $\rho_{_{C'}}$,
$\rho_{_D}$, and $ \rho_{_{D'}}$
to record the bimodule action (of $R$ or $S$) on $C\sd$, $C'\sd$, $D\sd$, and
$D'\sd$, respectively:
\begin{small}
$$
\entrymodifiers={+!!<0pt,\fontdimen22\textfont2>}
\xymatrixcolsep{20ex}
\xymatrixrowsep{8ex}
\xymatrix{
     R\! \ot \! S \! \ot\!  C\sd \ot \! D\sd\ot \! R\! \ot \! S  
    \ar[d]_{\hspace{2ex}1\ot\, \tau^{}_{_C}\ot\, \tau^{}_{_D}\, \ot 1 \ \ }
    \ar[r]_{\hspace{2ex} 1\ot 1 \ot \psi_{_R}\ot \psi_{_S} \ot 1 \ot 1\ \ }
&
    R\! \ot \! S \! \ot\!  C'\sd \ot \! D'\sd\ot \! R\! \ot \! S  
    \ar[d]^{\hspace{2ex} 1\ot\, \tau^{}_{_{C'}}\ot\, \tau^{}_{_{D'}}\, \ot 1\ \ }
  %  \ar@<-1ex>[l]_{\hspace{2ex} 1\ot 1 \ot \iota_{_R}\ot \iota_{_S} \ot 1 \ot 1\\ }
   \\
  R\! \ot \! C\sd\ot \! S\! \ot \! R\! \ot \! D\sd \ot\! S 
  \ar[d]_{\hspace{2ex}1\ot 1\ot \tau \ot 1\ot 1\ \  }
  \ar[r]_{\hspace{2ex} 1\ot \psi_{_R}\ot 1 \ot 1 \ot  \psi_{_S}  \ot 1\ \ }
  &
  R\! \ot \! C'\sd\! \ot \! S\! \ot \! R\! \ot \! D'\sd\! \ot\! S 
  \ar[d]^{\hspace{2ex}1\ot 1\ot \tau \ot 1\ot 1\ \ }
 %   \ar@<-1ex>[l]_{\hspace{2ex} 1\ot \iota_{_R}\ot 1\ot 1\ot \iota_{_S}  \ot   1\ \ }
  \\
            R \! \ot \! C\sd\ot\! R\! \ot \! S\! \ot\!  D\sd\ot\! S 
            \ar[d]_{\hspace{2ex}\rho^{}_{_C}\ot\, \rho^{}_{_D}\ \ }
           \ar[r]_{\hspace{2ex} 1\ot \psi_{_R}\ot 1 \ot 1 \ot  \psi_{_S}  \ot 1\ \ }
           &
            R \! \ot \! C'\sd \ot\! R\! \ot \! S\! \ot\!  D'\sd \ot\! S 
            \ar[d]^{\hspace{2ex}\rho^{}_{_{C'}}\ot\, \rho^{}_{_{D'}}\ \ }
  %           \ar@<-1ex>[l]_{\hspace{2ex} 1\ot \iota_{_R}\ot 1\ot 1\ot \iota_{_S}  \ot 1\ \ }
            \\
           C\sd\ot D\sd
              \ar[r]_{\hspace{2ex} \psi_{_R}\ot\,  \psi_{_S} \ \ }
            &
            C'\sd\ot D'\sd
 %           \ar@<-1ex>[l]_{\hspace{2ex} \iota_{_R}\ot \iota_{_S}  \ \ }
            \, .
} %end xy 
$$
\end{small}\noindent
The top square is commutative since $\psi_{_R}$ and $\psi_{_S}$
are compatible with $\tau$,
the middle square is commutative because the given maps apply to different
tensor components, and
the bottom square is commutative because 
$\psi_{_R}$ and $\psi_{_S}$
are bimodule homomorphisms.
Hence $\psi_{_R}\ot \psi_{_S}$ is an $\ttpp$-bimodule map
and the result follows
as the differentials on $C\sd\ott D\sd$ and $C'\sd\ott D'\sd$
are just the usual total differentials.
\end{proof}

\begin{remark}\label{rk:HKH}
{\em
  For construction of Hopf-Koszul Hecke 
  algebras in \cite{SW-DeformationTheoryHopfActions},
    we take the chain map $\psi_{_S}$ to be inclusion
    of the Koszul complex into the reduced bar complex
    and $\psi_{_R}$ to be the identity map on the reduced bar complex
    of a Hopf algebra.  See \cref{sec:Hopf-Koszul}.
        }% end em
\end{remark}

%%%%%%%%%%%%%%%%%%%%%%%%%%%%%%%%%%%%%%%%%%%%%%%%
%%%%%%%%%%%%%%%%%%%%%%%%%%%%%%%%%%%%%%%%%%%%%%%%
\section{Conversion between Resolutions
using compatible surjective and injective maps}
%%%%%%%%%%%%%%%%%%%%%%%%%%%%%%%%%%%%
\label{sec:conversion}
In this section, we combine the twisted Alexander-Whitney and
Eilenberg-Zilber maps for reduced bar resolutions
with compatible chain maps to and from other resolutions in order 
to transfer explicit homological information.
In the next section, we consider the case when
compatible chain maps in only one direction are given.

\subsection*{Conversion theorem}
Again we fix $k$-algebras $R$ and $S$ and a twisting map 
$\tau: S\ot R\rightarrow R\ot S$ recording an 
algebra structure on $R\ot S$. 
We take bimodule resolutions
\begin{equation*}
\begin{aligned}
%\quad
%\text{(i)}
  %\quad&
&C\sd : \  \cdots \, \longrightarrow C_1
\longrightarrow C_0 \longrightarrow 0
%%&& \text{ of } R\, ,   %\text{ as a $R$-bimodule and}&
%\\
\ \ \text{ and }
&&C'\sd: \  \cdots \, \longrightarrow C'_1 \longrightarrow C'_0 \longrightarrow 0 
  \ \ \text{ of }\ \ R\, ,  \text{ and } %\text{ as a $R$-bimodule and}&
\\
%\quad
%\text{(ii)}
%\quad&
&D\sd :\  \cdots \longrightarrow D_1 \longrightarrow D_0
\longrightarrow 0\ \,
%&& \text{ of } S\, ,  %\text{ as an $S$-bimodule}&
%\text{ and} %\text{ as a $R$-bimodule and}&
%\\
\text{ and }
&&D'\sd:\  \cdots \longrightarrow D'_1 \longrightarrow D'_0 \longrightarrow 0
\ \text{ of }\ \  S \, , %\text{ as an $S$-bimodule}&
\end{aligned}
\end{equation*}
compatible with $\tau$
(see \cref{CompatibleResolutionsDef})
and consider the
twisted product resolutions $C\sd\ott D\sd$
and $C'\sd\ott D' \sd$
(see \cref{twistedproductresolution}).
Often one may embed and project resolutions, so we begin
with the case when chain maps exist that
behave as projection and inclusion:
 $$
\hspace{5ex}
\mbox{
\entrymodifiers={+!!<0pt,\fontdimen22\textfont2>}
\xymatrixcolsep{6ex}
\xymatrixrowsep{6ex}
\xymatrix{
C'_{n+1}
\ar[rr]^{\partial_{n+1}'}   \ar[d]_{\pi_{_R}}
 &  & 
C'_{n}   \ar[d]_{\pi_{_R}}
&  & 
 \\
C_{n+1} 
 \ar[rr]^{\partial_{n+1}}  \ar@<-1ex>[u]_{\iota_{_R}}  
 & & 
\ C_{n} 
\ar@<-1ex>[u]_{\iota_{_R}} 
 & &
}
}
\hspace{-8ex}
\raisebox{-.75cm}[\height][\depth]{and}
\hspace{2ex}
\mbox{
\entrymodifiers={+!!<0pt,\fontdimen22\textfont2>}
\xymatrixcolsep{6ex}
\xymatrixrowsep{6ex}
\xymatrix{
D'_{n+1}
\ar[rr]^{\partial_{n+1}'}  \ar[d]_{\pi_{_S}}
 &  & 
D'_{n}
\ar[d]_{\pi_{_S}}
&  & 
 \\
D_{n+1} 
 \ar[rr]^{\partial_{n+1}}  \ar@<-1ex>[u]_{\iota_{_S}}  
 & & 
\ D_{n}
\ar@<-1ex>[u]_{\iota_{_S}} 
 \ . &  &
}
}
$$
\vspace{1ex}

\cref{lem:alphaRalphaS} implies the following.

%%%%%%%%%%%%%%%%%%%%
\begin{thm}\label{ConversionEverythingCompatible}
Suppose $R$-bimodule resolutions $C\sd,C'\sd$ of $R$ and 
 $S$-bimodule resolutions $D\sd,D'\sd$ of $S$
  are compatible with the twisting map $\tau$ and
     $$\pi_{_R}: C'\sd\rightarrow C\sd, \quad
  \iota_{_R}: C\sd\rightarrow C'\sd, \quad
  \pi_{_S}: D'\sd\rightarrow D\sd, \quad
  \iota_{_S}: D\sd \rightarrow D'\sd
  $$
  are compatible chain maps satisfying $\pi_{_R}\, \iota_{_R}=1_C$ and
  $\pi_{_S}\, \iota_{_S}=1_D$.
 Then 
$$
\pi_{_R}\ot\pi_{_S}:C'\sd\ott D'\sd 
\longrightarrow C\sd \ot _\tau D\sd 
\quad\text{ and }\quad
\iota_{_R}\ot \iota_{_S}: C\sd \ot _\tau D\sd \longrightarrow 
C'\sd\ott D'\sd\, 
\, 
$$
are chain maps of bimodule resolutions 
 of $\ttp$
with composition giving the identity:
$$(\pi_{_R}\ot \pi_{_S})\  (\iota_{_R}\ot \iota_{_S})  
= 1_{C\ott D}\, . 
$$
If  in addition $R$ and $S$ are graded algebras and
$\pi_{_R}$, $\pi_{_S}$, $\iota_{_R}$, $\iota_{_S}$ are graded chain maps
of graded resolutions,
then $\pi_{_R}\ot\pi_{_S}$ and $\iota_{_R}\ot \iota_{_S}$ are graded chain maps as well.
\end{thm}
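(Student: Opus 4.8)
The plan is to derive \cref{ConversionEverythingCompatible} directly from \cref{lem:alphaRalphaS} together with a few routine checks. First I would observe that the hypotheses say precisely that $\pi_{_R}:C'\sd\to C\sd$ and $\pi_{_S}:D'\sd\to D\sd$ are chain maps of bimodule resolutions (of $R$ and $S$ respectively) compatible with $\tau$, and likewise $\iota_{_R}:C\sd\to C'\sd$ and $\iota_{_S}:D\sd\to D'\sd$. Applying \cref{lem:alphaRalphaS} to the pair $(\pi_{_R},\pi_{_S})$ yields that $\pi_{_R}\ot\pi_{_S}:C'\sd\ott D'\sd\to C\sd\ott D\sd$ is a chain map of $\ttpp$-bimodule resolutions of $R\ott S$; applying it to $(\iota_{_R},\iota_{_S})$ gives the chain map $\iota_{_R}\ot\iota_{_S}$ in the other direction. (One should note that the complexes $C\sd\ott D\sd$ and $C'\sd\ott D'\sd$ are genuine resolutions of $\ttp$ by the material in \cref{sec:NW1}, since $C\sd, C'\sd, D\sd, D'\sd$ are resolutions compatible with $\tau$.)

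Next I would verify the composition identity. Because $\pi_{_R}\ot\pi_{_S}$ and $\iota_{_R}\ot\iota_{_S}$ are the total-complex maps induced componentwise, on the summand $C_i\ot D_j$ of $(C\sd\ott D\sd)_n$ we have
\[
(\pi_{_R}\ot\pi_{_S})\,(\iota_{_R}\ot\iota_{_S})\big|_{C_i\ot D_j}
= (\pi_{_R}\iota_{_R})\ot(\pi_{_S}\iota_{_S})
= 1_{C_i}\ot 1_{D_j},
\]
using the Koszul sign convention \cref{signconvention} (the signs cancel since each factor map is applied in the same order on both sides), and the hypotheses $\pi_{_R}\iota_{_R}=1_C$, $\pi_{_S}\iota_{_S}=1_D$. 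Summing over $i+j=n$ gives $(\pi_{_R}\ot\pi_{_S})(\iota_{_R}\ot\iota_{_S})=1_{C\ott D}$ as claimed.

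Finally, for the graded statement, I would note that if $R$ and $S$ are graded, the resolutions and their twisted products carry the total grading, and a tensor product (in the sense of \cref{signconvention}) of two graded maps is again graded of the appropriate degree. Thus if $\pi_{_R},\pi_{_S},\iota_{_R},\iota_{_S}$ are graded chain maps of graded resolutions, then $\pi_{_R}\ot\pi_{_S}$ and $\iota_{_R}\ot\iota_{_S}$ preserve the internal grading degree on each summand $C_i\ot D_j$, hence on $(C\sd\ott D\sd)_n=\bigoplus_{i+j=n}C_i\ot D_j$, and so they are graded chain maps. I do not anticipate any serious obstacle here: the only point requiring care is bookkeeping with the Koszul signs when composing the componentwise maps, and confirming that compatibility of the individual chain maps (as in \cref{compatiblechainmaps}) is exactly what \cref{lem:alphaRalphaS} consumes — both are immediate from the definitions.
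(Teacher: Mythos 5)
Your proposal is correct and follows exactly the paper's route: the paper proves this theorem simply by invoking \cref{lem:alphaRalphaS}, and your componentwise verification of $(\pi_{_R}\ot\pi_{_S})(\iota_{_R}\ot\iota_{_S})=1$ and of the grading is precisely the routine fleshing-out the paper leaves implicit (note the Koszul signs are trivially $+1$ here since all four maps have homological degree $0$).
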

%%%%%%%%%%%%%%% 5

%%%%%%%%%%%%%%%%%%%%%%%%
\subsection*{Conversion theorem for bar resolutions}

We now use the twisted
Alexander-Whitney and Eilenberg-Zilber maps constructed
in \cref{AW-EZforreducedbar}
to connect
the default reduced bar resolution for a twisted tensor
product (used for defining algebraic structures)
to another choice of twisted product resolution for that algebra
(which may be tractable for computations).
We obtain a 
splitting under some conditions,
\vspace{3ex} 
\begin{equation*}
\label{splitting}
\xymatrix{
  &  C\sd\ot_\tau D\sd
 % \ar@/^2.0pc/@[red][r]
  \ar[rr] &
  & \BB_{R\ot_{\tau} S}\ .
  \ar@(ul,ur)@{-->}[ll] \, 
}
\end{equation*}

\vspace{1ex}

%%%%%%%%%%%%%%%%%%%%
\begin{thm}\label{barconversion}
  Suppose bimodule resolutions $C\sd$ and $D\sd$ of $R$ and $S$,
  respectively, are compatible with the twisting map $\tau$
 and there exist  compatible
  chain maps
  $$\pi_{_R}: \BB_R\rightarrow C\sd,\quad
  \iota_{_R}: C\sd\rightarrow \BB_R,\quad
  \pi_{_S}: \BB_S\rightarrow D\sd,\quad
  \iota_{_S}: D\sd \rightarrow \BB_S$$
  with $\pi_{_R}\, \iota_{_R}=1_C$ and $\pi_{_S}\, \iota_{_S}=1_D$.
Then there exist chain maps
$$
\pi:\BB_{R\ot_{\tau} S}   
\longrightarrow C\sd \ot _\tau D\sd 
\quad\text{ and }\quad
\iota: C\sd \ot _\tau D\sd \longrightarrow 
\BB_{R\ot_{\tau} S}
\, 
$$
of  bimodule resolutions of $\ttp$
with composition giving the identity:
$$
\pi \, \iota %: (C \ot ^\tau D)_n \rightarrow (C \ot ^\tau D)_n
= 1_{C \ot_{\tau} D}\, .
$$
If  in addition $S$ and $R$ are graded algebras and
$\pi_{_R}$, $\pi_{_S}$, $\iota_{_R}$, $\iota_{_S}$ are graded chain maps
of graded resolutions,
then $\iota$ and $\pi$ are graded chain maps as well.
\end{thm}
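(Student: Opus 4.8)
The plan is to reduce \cref{barconversion} to \cref{ConversionEverythingCompatible} together with \cref{AW-EZforreducedbar}, by composing the twisted Alexander--Whitney and Eilenberg--Zilber maps for reduced bar resolutions with the given chain maps between $\BB_R,\BB_S$ and $C\sd,D\sd$. The only genuine hypothesis to verify is that the reduced bar resolutions $\BB_R$ and $\BB_S$ are themselves compatible with the twisting map $\tau$, which is exactly \cref{BarCompatible}; this ensures that all four resolutions in play ($\BB_R$, $C\sd$, $\BB_S$, $D\sd$) are compatible with $\tau$, so the notions of ``compatible chain map'' make sense for $\pi_{_R},\iota_{_R},\pi_{_S},\iota_{_S}$ as stated.

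First I would apply \cref{ConversionEverythingCompatible} with $C'\sd=\BB_R$ and $D'\sd=\BB_S$: this yields chain maps of bimodule resolutions of $R\ott S$,
$$
\pi_{_R}\ot\pi_{_S}:\BB_R\ott\BB_S\longrightarrow C\sd\ott D\sd
\quad\text{ and }\quad
\iota_{_R}\ot\iota_{_S}:C\sd\ott D\sd\longrightarrow \BB_R\ott\BB_S
$$
with $(\pi_{_R}\ot\pi_{_S})(\iota_{_R}\ot\iota_{_S})=1_{C\ott D}$. Second I would invoke \cref{AW-EZforreducedbar} to get the twisted AW and EZ chain maps $\AWt:\BB_{R\ott S}\to\BB_R\ott\BB_S$ and $\EZt:\BB_R\ott\BB_S\to\BB_{R\ott S}$ with $\AWt\,\EZt=1$. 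Then I define
$$
\pi=(\pi_{_R}\ot\pi_{_S})\,\AWt
\quad\text{ and }\quad
\iota=\EZt\,(\iota_{_R}\ot\iota_{_S})\, .
$$
Each is a composition of $\ttpp$-bimodule chain maps between resolutions of $R\ott S$, hence is itself such a chain map, and
$$
\pi\,\iota=(\pi_{_R}\ot\pi_{_S})\,\AWt\,\EZt\,(\iota_{_R}\ot\iota_{_S})
=(\pi_{_R}\ot\pi_{_S})\,(\iota_{_R}\ot\iota_{_S})=1_{C\ott D}\, ,
$$
as desired. For the graded statement, I would note that when $R$ and $S$ are graded and the four given chain maps are graded, then $\pi_{_R}\ot\pi_{_S}$ and $\iota_{_R}\ot\iota_{_S}$ are graded by the last sentence of \cref{ConversionEverythingCompatible}; it remains to observe that $\AWt$ and $\EZt$ are graded. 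This follows because the twisted perfect (un)shuffle, the front/back face maps $\phi$, and the shuffle maps $\theta$ are all built from $\tau$, multiplication maps, and permutations of tensor factors, all of which are degree-preserving under the standard grading conventions (with the Koszul sign convention of~\cref{signconvention}), and projection/inclusion to the reduced complexes are graded as well.

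The main obstacle, such as it is, is bookkeeping rather than conceptual: one must be careful that the ``compatibility'' hypotheses on $\pi_{_R},\iota_{_R},\pi_{_S},\iota_{_S}$ are stated relative to the specific compatibility maps $\tau_{_{\BB_R}}$ and $\tau_{_{\BB_S}}$ of \cref{BarCompatible}, so that \cref{lem:alphaRalphaS} (hence \cref{ConversionEverythingCompatible}) applies verbatim. Once that is pinned down, the argument is a short diagram chase of compositions. I would also remark that, unlike in \cref{ConversionEverythingCompatible}, we do \emph{not} obtain $\iota\,\pi=1$ in general, since already $\EZt\,\AWt\neq 1$ on $\BB_{R\ott S}$ (see \cref{LieAlgebraExample}); the theorem claims only the one-sided identity $\pi\,\iota=1$, exhibiting $C\sd\ott D\sd$ as a direct summand — up to chain homotopy — of $\BB_{R\ott S}$, which is exactly what the composition identity provides.
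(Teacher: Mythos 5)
Your proposal is correct and follows essentially the same route as the paper: the authors likewise set $\pi=(\pi_{_R}\ot\pi_{_S})\,\AWt$ and $\iota=\EZt\,(\iota_{_R}\ot\iota_{_S})$, citing \cref{ConversionEverythingCompatible} and \cref{AW-EZforreducedbar}, and conclude $\pi\,\iota=1_{C\ott D}$ exactly as you do. Your added remarks on why $\AWt$ and $\EZt$ are graded and on the one-sidedness of the identity are accurate elaborations of points the paper treats more tersely.
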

$$
\entrymodifiers={+!!<0pt,\fontdimen22\textfont2>}
\xymatrixcolsep{2ex}
\xymatrixrowsep{8ex}
\xymatrix{
\ar@(dl, ul)[dd]_{\pi} & &
(\BB_{R\otimes_{\tau} S})_{n+1}   
\ar[rr]  \ar@<-1ex>[d]_{\rm{AW}^\tau}
&  & 
 (\BB_{R\ot_{\tau} S})_{n}
\ar@<-1ex>[d]_{\rm{AW}^\tau}
&  
 \\
& & 
(\BB_{R} \otimes_{\tau} \BB_S )_{n+1} 
 \ar[rr]  \ar[u]<-1ex>_{\rm{EZ}^\tau}
 \ar@<-1ex>[d]_{\pi_{_R}\ot\, \pi_{_S}}  
 & & 
(\BB_{R} \otimes_{\tau} \BB_S )_{n} 
\ar[u]<-1ex>_{\rm{EZ}^\tau}  
\ar@<-1ex>[d]_{\pi_{_R}\ot\, \pi_{_S}}  
&  & 
  \\ 
  & 
&
(C\sd \otimes_{\tau} D\sd )_{n+1} 
 \ar[rr]  \ar[u]<-1ex>_{\iota_{_R} \ot\,  \iota_{_S}}
 & & 
(C\sd \otimes_{\tau} D\sd )_{n} 
\ar[u]<-1ex>_{\iota_{_R} \ot\,  \iota_{_S}}
& 
\ar@(ur,dr)[uu]_{\iota}
}
$$
\begin{proof}
  \cref{ConversionEverythingCompatible}
  gives chain maps 
  $$ \pi_{_R}\ot \pi_{_S}: \BB_R \ott \BB_S 
\longrightarrow 
C\sd\ott D\sd
  \qquad\text{ and }\qquad
\iota_{_R}\ot \iota_{_S}: 
  C\sd\ott D\sd \longrightarrow \BB_R\ott\BB_S   
$$
of bimodule resolutions of $\ttp$.
We define the compositions
\begin{equation}\label{eqn:iota-pi}
  \pi= (\pi_{_R}\ot\pi_{_S})\,  \AW^\tau 
       \ \ \ \mbox{ and } \ \ \  
  \iota = \EZ^\tau \,  (\iota_{_R}\ot \iota_{_S})
 \end{equation}
 using the twisted Alexander-Whitney
 and Eilenberg-Zilber maps of \cref{AW-EZforreducedbar}:
 \begin{equation*}
  \begin{aligned}
{\rm{AW}}^{\tau}:\ \ \BB_{\ttps}
\longrightarrow \BB_R\ot_{\tau} \, \BB_S \, 
\quad\text{ and }\quad 
  \BEZ^{\tau}:\ \ 
\BB_R\ot_{\tau} \, \BB_S \longrightarrow\BB_{\ttps}
\, .
\end{aligned}
\end{equation*}
As the maps
${\BAW}^{\tau}$, $\BEZ^{\tau}$, $\pi_{_R}\ot \pi_{_S}$, and $\iota_{_R}\ot\iota_{_S}$
are chain maps of bimodule resolutions of $R\ott S$, so are
$\pi$ and $\iota$,
and 
the composition is the identity in each degree
by \cref{AW-EZforreducedbar}:
$$\pi\, \iota
=(\pi_{_R}\ot \pi_{_S})\, \AW^\tau 
\, \EZ^\tau \, (\iota_{_R}\ot \iota_{_S}) 
=    (\pi_{_R}\ot \pi_{_S})\,  (\iota_{_R}\ot \iota_{_S})
= 1_{C\ott D}
\ .$$

Now suppose in addition that $R$ and $S$ are graded and 
$C\sd$ and $D\sd$ are graded resolutions.
If the chain maps 
$\pi_{_R}$, $\pi_{_S}$, $\iota_{_R}$, $\iota_{_S}$ in each degree
are graded maps, 
then $\iota$ and $\pi$ in each degree are also
graded maps by construction.
\end{proof}

\vspace{2ex}
%%%%%%%%%%%%%%%%%%%%%%%%%%%%%%%

%%%%%%%%%%%%%%%%%%%%%%%%%%%%%%%%%%%%%%
\subsection*{Special case: groups acting on polynomial rings}
Again take a finite group
       $G\subset \GL(V)$
       acting on $V=k^n$ inducing an action on 
        $k[V]=k[x_1, \ldots, x_n]$
        by automorphisms as in 
        \cref{GroupUnshuffle,GroupExampleAWEZmaps}.
 Here, $k[V]\ott kG=k[V]\# kG$
       for twisting map $\tau: g\ot f\longmapsto {}^g\! f\ot g$
       for $g$ in $G$ and $f$ in $k[V]$.
We use the usual $G$-grading on the reduced bar complex
$\BB_{kG}$ with $g_1\ot\cdots\ot g_n$ homogeneous
of degree $g_1\cdots g_n$ for $g_1, \ldots, g_n$ in $G$. 
Consider  bimodule resolutions
$$
\begin{aligned}
%\quad
%\text{ (i) }\ \ 
C\sd\ :\ \ \cdots \longrightarrow & \ \, C_2 \longrightarrow C_1 \longrightarrow C_0 \longrightarrow 0
\ \ \text{ of } k[V], \text{ and}\\
%\quad
%\text{ (ii)  } \ \ 
D\sd\ :\ \ \cdots \longrightarrow & \ D_2 \longrightarrow D_1 \longrightarrow D_0 \longrightarrow 0
\ \ \text{ of } kG \, 
\end{aligned}
$$
for $D\sd$ $G$-graded with consistent left/right $G$-action
and $C\sd$ also a resolution of $k[V]$ as a left $kG$-module 
with consistent left actions of $G$ and $k[V]$.
So each $D_i=\bigoplus_{g\in G} (D_i)_g$ with
$$ g((D_i)_h)g'\subset (D_i)_{ghg'} \ \ \text{ and } \ \ 
g(f x)= (\, ^{g}\! f) (gx)
\ \ \text{ for }g,h,g'\in G, \, f\in k[V], \, x\in C\sd\, .
$$
Then $C\sd$ and $D\sd$ are both compatible with $\tau$ via bijective
chain maps
(see \cite{SW-grouptwistedAWEZ})
$$
\begin{aligned}
 &\tau_{_C}: \, kG\ot \, C\sd\ \ \longrightarrow\ \ C\sd\, \ot kG\, , 
  \qquad &&g\ot x &&\mapsto &&{}^gx\, \ot\ g 
  \quad\text{ for } g\in G,\ x\in C\, ,
  \\
  &\tau_{_D}: \, D\sd\, \ot k[V]\longrightarrow k[V]\ot D\sd\, , 
  \qquad &&y\ot f &&\mapsto &&{}^g\! f\, \ot \ y
  \quad\text{ for } f\in k[V],\ y\in D_g
    \, .
\end{aligned}
$$
The twisted product resolution $X\sd=C\sd\ott D\sd$
is a bimodule
resolution of $k[V] \# kG$
with bimodule action as follows: for 
  $x\in C_i$, $y\in (D_j)_h$, $g,g',h\in G$, and $f,f'\in k[V]$,
$$
(f'\ot g)(x\ot y)(f\ot g')
=
f' ( {}^gx) ( {}^{gh}\! f) \ot g yg'
\, .
$$

  Suppose there exist injective
  chain maps of bimodule resolutions of $k[V]$ and $kG$,
  respectively, compatible with $\tau$
   (with respect to $\tau_{_C}$ and $\tau_{_{D}}$),
$$
       \iota_{_{k[V]}}:C\sd\rightarrow \BB_{k[V]}
      \qquad\text{ and }\qquad
       \iota_{_{kG}}:D\sd\rightarrow \BB_{kG}\, .
       $$      
       For example, we might take the case when an injective chain map
       $\iota_{_{k[V]}}$
       is also
       a $kG$-module  
       map and an injective chain map $\iota_{_{kG}}$ is 
       $G$-graded.

If $kG$ is semisimple, i.e., 
char $k$ does not divide $|G|$,
then the map $\pi_{_{k[V]}}:  \BB_{_{k[V]}}\rightarrow C\sd$
that projects to $\Ima \iota_{_{k[V]}}$ with respect
to some $kG$-bimodule complement in $\BB_{k[V]}$ and then
applies $\iota_{_{k[V]}}^{-1}$ is a
$kG$-bimodule map and thus is
compatible with $\tau$.
If there is also a compatible chain map
$\pi_{_{kG}}: D\sd\rightarrow \BB_{kG}$ with $\pi_{_{kG}}\, \iota_{_{kG}}=1$
(for example, 
take $\pi_{_{kG}}=1$ for $D\sd=\BB_{kG}$),
then \cref{barconversion} gives chain maps
$$
\pi:\BB_{k[V]\ot_{\tau} kG}   
\longrightarrow C\sd \ot _\tau D\sd
\quad\text{ and }\quad
\iota: C\sd \ot _\tau D\sd \longrightarrow 
\BB_{k[V]\ot_{\tau} kG}
$$
of  bimodule resolutions of $k[V]\# G$
with 
$\pi \, \iota %: (C \ot ^\tau D)_n \rightarrow (C \ot ^\tau D)_n
= 1_{C \ot_{\tau} D}$.
Note that for homological considerations, one might take 
  $D\sd$ to give the augmented resolution 
  $0\rightarrow kG \rightarrow kG 
\rightarrow 0$.

If $kG$ is not
semisimple,
we may nevertheless be given chain maps 
$$
 \pi_{_{k[V]}}: \BB_{k[V]}\rightarrow C\sd\, \quad 
             \qquad\text{and}\qquad 
       %\iota_{_{k[V]}}:C\sd\rightarrow \BB_{k[V]},\quad 
       \pi_{_{kG}}:\BB_{kG}\rightarrow D\sd\,\quad 
       %\iota_{_{kG}}:D\sd\rightarrow \BB_{kG}\, 
       $$
 with $\pi_{_{k[V]}}\, \iota_{_{k[V]}}=1_C$ and 
       $\pi_{_{kG}}\, \iota_{_{kG}}=1_D$
       with $\pi_{_{k[V]}}$ a $kG$-module map and $\pi_{_{kG}}$ 
       $G$-graded.
       Then $\pi_{_{k[V]}}$ and $\pi_{_{kG}}$
       are compatible with $\tau$
 and
\cref{barconversion} again gives chain maps
$$
\pi:\BB_{k[V]\ot_{\tau} kG}   
\rightarrow C\sd \ot _\tau D\sd
\quad\text{ and }\quad
\iota: C\sd \ot _\tau D\sd \rightarrow 
\BB_{k[V]\ot_{\tau} kG}
$$
of  bimodule resolutions of $k[V]\# G$
with 
$
\pi \, \iota %: (C \ot ^\tau D)_n \rightarrow (C \ot ^\tau D)_n
= 1_{C \ot_{\tau} D}$.
We recover
\cite[Theorem 4.1]{SW-grouptwistedAWEZ},
where it was assumed implicitly that $\iota_{_{kG}}$ and $\pi_{_{kG}}$
were $G$-graded.
Explicit formulas for such chain maps are in~\cite[(2.6) and (4.2)]{SW-quantum}.

In the next sections, we develop results for the case when $kG$ is not
semisimple
and chain maps 
$
\pi_{_{k[V]}}: \BB_{k[V]}\rightarrow C\sd$
and
$\pi_{_{kG}}:\BB_{kG}\rightarrow D\sd$
       as in \cref{barconversion} are not given.
       See \cref{SymmetricAlgebraGroup,ChainMapsGroupActingOnPolyRing}.

%%%%%%%%%%%%%%%%%%%%%%%%%%%%%%%
%%%%%%%%%%%%%%%%%%%
%%%%%%%%%%%%%%%%%%%%%%%%%%%%%%%%%%%
%%%%%%%%%%%%%%%%%%%%%%%%%%%%%%%%

\section{Conversion between Resolutions
using only compatible injective maps}
%%%%%%%%%%%%%%%%%%%%%%%%%%%%%%
 Again consider $k$-algebras $R$ and $S$ and a twisting map 
$\tau: S\ot R\rightarrow R\ot S$ recording an  
algebra structure on $R\ot S$.
We use here the twisted
Alexander-Whitney and Eilenberg-Zilber maps constructed
in \cref{AW-EZforreducedbar}
to connect
the default reduced bar resolution $\BB_{R\ott S}$ for a twisted tensor
product to twisted product resolutions $C\ott D$ for that algebra,
but in the more difficult case when
compatible chain maps $\pi_R$ and $\pi_S$ as in the hypothesis of
\cref{barconversion}
may not be given.
We give a
splitting under more general conditions,
\vspace{3ex} 
\begin{equation*}
\label{splitting}
\xymatrix{
  &  C\sd\ot_\tau D\sd
 % \ar@/^2.0pc/@[red][r]
  \ar[rr] &
  & \BB_{R\ot_{\tau} S}\ .
  \ar@(ul,ur)@{-->}[ll] \, 
}
\end{equation*}

\subsection*{Projective quotients}

We use a bootstrap construction guaranteed by
the following lemma
(see also~\cite[Lemma 4.7]{quad}).
%

%%%%%%%%%%%%%%%%%%%%%%%%%%%%%%%%%%%%%%%%%%%%%%
\begin{lemma}\label{bootstrap}
Suppose $\psi: P\sd\rightarrow P'\sd$ is an injective chain map
of bimodule resolutions
of an algebra $A$ lifting the identity map on $A$.
If
$P'/\ima \psi $ is projective as an $A$-bimodule in each degree,
  then there exists a chain map  $\psi': P'\sd \rightarrow P\sd$
  with 
  $\psi'\, \psi=1_{_P}$:
  $$
\hspace{5ex}
\mbox{
\entrymodifiers={+!!<0pt,\fontdimen22\textfont2>}
\xymatrixcolsep{6ex}
\xymatrixrowsep{6ex}
\xymatrix{
P_{n}'
\ar[rr]^{\del_{n}'}   \ar[d]_{\psi'}
 &  & 
P_{n-1}'  \ar[d]_{\psi'}
&  & 
\\
P_{n}
 \ar[rr]_{\del_{n}}  \ar@<-1ex>[u]_{\psi}  
 & & 
\ P_{n-1}
\ar@<-1ex>[u]_{\psi} 
 & &
}
}
$$
If $A$ is a graded algebra with $P$, $P'$ graded resolutions
and $\psi$ a graded map, then $\psi'$ can be chosen to be graded.
\end{lemma}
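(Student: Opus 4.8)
The plan is to argue entirely inside the abelian category of $A$-bimodules (equivalently $A^e$-modules, $A^e=A\ot A^{\op}$), which has enough projectives, and to produce $\psi'$ as the projection coming from a splitting of $P'\sd$ as a complex. First I would form the cokernel complex $Q\sd$ with $Q_n=P'_n/\ima\psi_n$, so that there is a short exact sequence of $A$-bimodule complexes $0\to P\sd\xrightarrow{\ \psi\ }P'\sd\xrightarrow{\ \pi\ }Q\sd\to 0$. Augmenting all three complexes by $A$ in degree $-1$, and using that $\psi$ lifts the identity so that $\psi_{-1}=1_A$, the cokernel in degree $-1$ is $A/A=0$. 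Since $P\sd$ and $P'\sd$ are resolutions, their augmented complexes are acyclic, so the long exact homology sequence forces the augmented $Q\sd$ to be acyclic as well; hence $Q\sd$ itself is a bounded-below exact complex whose terms $Q_n$ are projective $A$-bimodules by hypothesis.

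Next I would use the standard fact that a bounded-below exact complex of projectives is contractible, building a contracting homotopy $h\sd\colon Q\sd\to Q\sd$ with $\partial h+h\partial=1_Q$ degreewise from projectivity of the $Q_n$. Because each $Q_n$ is projective, the sequence $0\to P_n\to P'_n\to Q_n\to 0$ splits in every degree, so I fix $A$-bimodule sections $j_n\colon Q_n\to P'_n$ of $\pi_n$. The family $(j_n)$ need not be a chain map, but the discrepancy $\partial^{P'}j_n-j_{n-1}\partial^Q$ takes values in $\ker\pi_{n-1}=\ima\psi_{n-1}$ and thus, using injectivity of $\psi$, defines a genuine degree $-1$ chain map $c\sd\colon Q\sd\to P\sd$. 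Since $Q\sd$ is contractible, $c\sd$ is null-homotopic --- concretely $c\sd h\sd$ furnishes the homotopy --- so replacing each $j_n$ by the corrected section $j_n+\psi_n(c_{n+1}h_n)$ yields $A$-bimodule sections that now assemble into a chain map $Q\sd\to P'\sd$ splitting $\pi\sd$ as complexes. Therefore $P'\sd=\psi(P\sd)\oplus j(Q\sd)$ as $A$-bimodule complexes, and I would define $\psi'$ to be the projection of $P'\sd$ onto the summand $\psi(P\sd)$ followed by $\psi^{-1}$ (legitimate as $\psi$ is injective); this $\psi'$ is a chain map of $A$-bimodule complexes with $\psi'\,\psi=1_P$. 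An equivalent, more hands-on route is a direct dimension-shifting induction: $\psi'_n$ is forced to be $\psi_n^{-1}$ on $\ima\psi_n$, and on a projective complement identified with $Q_n$ one lifts the value dictated by the chain-map condition through $\partial^P_n$, which is possible because $P\sd$ is exact and $Q_n$ is projective.

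For the graded statement I would rerun the argument in the category of graded $A$-bimodules: then $Q_n$ is graded-projective, the sections $j_n$ and the contracting homotopy $h\sd$ may be chosen degree-preserving (a bounded-below exact complex of graded projectives is contractible through graded maps), the connecting map $c\sd$ and the correction terms stay graded, and hence $\psi'$ is graded. I expect the only points requiring genuine care to be the verification that $Q\sd$ is exact --- in particular handling the augmentation and the bottom degree $Q_0\to 0$ --- together with the sign bookkeeping in the homotopy correction; nothing here is conceptually deep, and the graded refinement is routine once one observes that every choice (sections, homotopy, lifts) can be made degree-preserving.
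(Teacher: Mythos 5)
Your proposal is correct, but it reaches the conclusion by a genuinely different route than the paper. The paper runs a direct degree-by-degree induction in the style of the Comparison Theorem: it sets $\psi'_n=\psi_n^{-1}$ on $\ima\psi_n$, uses projectivity of $P'_n/\ima\psi_n$ to split off a complement $E_n$, observes that $\psi'_{n-1}\partial'_n(E_n)\subseteq\ima\partial_n=\ker\partial_{n-1}$ by the inductive hypothesis and exactness of $P\sd$, and lifts through $\partial_n$ using projectivity of $E_n$ --- exactly the ``more hands-on route'' you sketch in your closing sentences. Your main argument instead works globally: you pass to the cokernel complex $Q\sd$, show via the long exact sequence on the augmented complexes that it is a bounded-below exact complex of projectives, hence contractible, and then correct degreewise sections $j_n$ by the null-homotopy $c\sd h\sd$ of the discrepancy map so that $\pi\sd$ splits as a map of complexes; your computation that $k_n=c_{n+1}h_n$ repairs the sections is right (using $\partial^P c_{n+1}=-c_n\partial^Q$ and $\partial^Q h+h\partial^Q=1$), and the resulting $\psi'=\psi^{-1}(1-\tilde\jmath\,\pi)$ is a chain retraction. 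Your approach buys slightly more --- a direct sum decomposition $P'\sd\cong\psi(P\sd)\oplus\tilde\jmath(Q\sd)$ of $A$-bimodule complexes, not merely a retraction --- at the cost of invoking the contractibility of bounded-below exact complexes of projectives, which is itself proved by essentially the induction the paper carries out; the paper's argument is therefore the more self-contained of the two. Both handle the graded refinement the same way, by running the construction in the category of graded bimodules.
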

\begin{proof}
  The result follows by induction using a diagram chase
  reminiscent of that verifying the Comparison Theorem as follows.
  Set $\psi'_{-1}=\psi_{-1}=1_A$ and let $\del$ and $\del'$
  denote the differentials on $P\sd$ and $P'\sd$, respectively,
  with $\del_0:P_0\rightarrow A$ and $\del'_0:P_0\rightarrow A$
  the augmentation maps.  Fix $n\geq 0$ and 
 assume that there exists a  chain map defined 
  up to  degree~$n-1$, i.e.,
  there are $A$-bimodule maps
$\psi_{i}': P_{i}'\rightarrow P_{i}$
commuting with the differentials with
$\psi_{i}'\, \psi_{i}=1_{P_i}$
for $0\leq i < n$.

We set $\psi_n'=\psi_n^{-1}$ on 
$\ima\psi_n$ in $P_n'$ (recall $\psi$ is injective) and notice that,
for $y$ in $\ima\psi_n$,
$$\del_n \, \psi_n'(y)
=
\psi_{n-1}'\, \psi_{n-1}\, \del_n \, \psi_n'(y)=
\psi_{n-1}'\, \del_{n}' \, \psi_n\, \psi_n' (y)
= \psi_{n-1}'\, \del_{n}' (y) . 
\, 
$$

Now we define $\psi_n'$ on a complement.
Since $P_n'/\Ima \Psi_n$ is projective,
the map $P_n' \rightarrow P_n'/\Ima \psi_n$ splits,
and $$P_n'=E_n\oplus \Ima \Psi_n$$
for some $A$-subbimodule $E_n$ of $P_n'$ 
with $E_n\cong P_n'/\Ima \psi_n$.
We define $\psi_n'$ on the direct summand $E_n$ as follows.
First note that
$$
     \psi'_{n-1}\, \partial'_n (E_n) \subseteq 
     \ima \partial_n
$$
since
$\ima \partial_n = \ker \partial_{n-1}$
and $\partial_{n-1} \psi'_{n-1} \partial_n' 
= \psi'_{n-2} \partial'_{n-1}\partial'_n = 0$
by induction.
Then since $E_n$ is projective and $\del_n: P_n\rightarrow
\ima \partial_n$ is onto,
the map $\psi'_{n-1}\, \partial'_n : E_n \rightarrow 
     \ima \partial_n$
lifts to a $A$-bimodule map 
$\psi_n ' : E_n\rightarrow P_n$ with
$$
   \partial_n\, \psi_n ' = \psi'_{n-1}\, \partial_n ' 
\qquad\text{ 
  as maps on $E_n$.}
$$

Then $\psi_n'$ commutes with the differential on all of $P_n'$, i.e.,
$\psi'$ is a chain map up to the degree~$n$ term, and
$\psi'_n \, \psi_n = 1_{P_n}$ by construction.
In the graded setting, one can choose $\psi'$ to be graded
(e.g., see~\cite[Corollary I.2.2]{NV82}). 
\end{proof}
%%%%%%%%%%%%5

We now use the twisted Eilenberg-Zilber map $\EZt$
of \cref{AW-EZforreducedbar}
for bimodule resolutions
\begin{equation*}
\begin{aligned}
%\quad
%\text{(i)}
  \quad&
C\sd: \ \ \cdots \longrightarrow C_2 \longrightarrow C_1 \longrightarrow C_0 \longrightarrow 0
\ \ \  \text{ of } R & \text{ and} %\text{ as a $R$-bimodule and}&
\\
%\quad
%\text{(ii)}
\quad&
D\sd:\ \  \cdots \longrightarrow D_2 \longrightarrow D_1 \longrightarrow D_0 \longrightarrow 0\
\ \text{ of } S & %\text{ as an $S$-bimodule}&
\end{aligned}
\end{equation*}
with compatible injective chain maps as shown:
 $$
\hspace{5ex}
\mbox{
\entrymodifiers={+!!<0pt,\fontdimen22\textfont2>}
\xymatrixcolsep{6ex}
\xymatrixrowsep{6ex}
\xymatrix{
(\BB_R )_{n+1}
\ar[rr]^{d_{n+1}}
% \ar[d]_{\pi_{_R}}
 &  & 
 ( \BB_R )_{n}
 % \ar[d]_{\pi_{_R}}
&  & 
 \\
C_{n+1} 
 \ar[rr]^{\partial_{n+1}}  \ar@<-1ex>[u]^{\iota_{_R}}  
 & & 
\ C_{n} 
\ar@<-1ex>[u]_{\iota_{_R}} 
 & &
}
}
\hspace{-7ex}
\mbox{
\entrymodifiers={+!!<0pt,\fontdimen22\textfont2>}
\xymatrixcolsep{6ex}
\xymatrixrowsep{6ex}
\xymatrix{
(\BB_{S} )_{n+1}
\ar[rr]^{d_{n+1}}
%\ar[d]_{\pi_{_S}}
 &  & 
(\BB_{S} )_{n}
%\ar[d]_{\pi_{_S}}
&  & 
 \\
D_{n+1} 
 \ar[rr]^{\partial_{n+1}}  \ar@<-1ex>[u]^{\iota_{_S}}  
 & & 
\ D_{n}
\ar@<-1ex>[u]_{\iota_{_S}} 
 \ . &  &
}
}
$$

\vspace{1ex}

%%%%%%%%%%%%%%%%%%%%%%%%%%%%%%%%%%
%%%%%%%%%%%%%%%%%%%%
\begin{thm}\label{conversionspecial}
Suppose bimodule resolutions $C\sd$ and $D\sd$ of $k$-algebras $R$ and $S$,
  respectively, are compatible with a twisting map $\tau:S\ot
  R\rightarrow R\ot S$ and there exist  compatible
  injective chain maps 
  $$\iota_{_R}: C\sd\longrightarrow \BB_R\quad\text{ and }\quad
  \iota_{_S}: D\sd \longrightarrow \BB_S\, .$$
If $\BB_{R\ott S}/\Ima \iota$ is
  projective
  as an $\ttpp$-bimodule
  in each degree for $\iota=\EZt(\iota_R\ot \iota_S)$,
  then there exists a chain map 
$
\pi:\BB_{R\ot_{\tau} S}   
\longrightarrow C\sd \ot _\tau D\sd$
%\quad\text{ and }\quad
%\iota: C\sd \ot _\tau D\sd \longrightarrow 
%\BB_{R\ot_{\tau} S}
%\, 
%$$ 
of bimodule resolutions
of $\ttp$ with composition the identity:
$$
\pi \, \iota %: (C \ot ^\tau D)_n \rightarrow (C \ot ^\tau D)_n
= 1_{C \ot_{\tau} D}\, .
$$
If $R$ and $S$ are graded algebras with $C\sd$ and $D\sd$ graded
resolutions, then the chain maps $\iota$, $\pi$ are graded maps provided 
$\iota_{_R}$ and $\iota_{_S}$ are graded maps.
 $$
\entrymodifiers={+!!<0pt,\fontdimen22\textfont2>}
\xymatrixcolsep{2ex}
\xymatrixrowsep{8ex}
\xymatrix{
%\ar@(dl, ul)[d]_{\pi} & &
(\BB_{{k[V]}\# kG})_{n+1}   
\ar[rr]  \ar@<-1ex>[d]_{\pi}
&  & 
 (\BB_{{k[V]}\# kG})_{n}
 \ar@<-1ex>[d]_{\pi}
\\
(C\sd \otimes_{\tau} D\sd )_{n+1} 
 \ar[rr]  \ar[u]<-1ex>_{\iota}
 & & 
(C\sd \otimes_{\tau} D\sd )_{n} \, 
\ar[u]<-1ex>_{\iota}
}
$$
\end{thm}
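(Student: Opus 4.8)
The plan is to build the splitting map $\iota$ out of the twisted Eilenberg--Zilber map together with the given injective chain maps, and then to feed it into \cref{bootstrap}. First I would form $\iota_{_R}\ot\iota_{_S}:C\sd\ott D\sd\to\BB_R\ott\BB_S$; since $\iota_{_R}$ and $\iota_{_S}$ are compatible with $\tau$, \cref{lem:alphaRalphaS} shows this is a chain map of $\ttpp$-bimodule resolutions of $R\ott S$ (the complex $C\sd\ott D\sd$ being a resolution of $\ttp$ by the twisted product construction of \cref{sec:NW1}). Composing with the twisted Eilenberg--Zilber map of \cref{AW-EZforreducedbar} gives $\iota=\EZt\,(\iota_{_R}\ot\iota_{_S}):C\sd\ott D\sd\to\BB_{\ttp}$, which --- exactly as in the part of the proof of \cref{barconversion} that does not invoke $\pi_{_R}$ or $\pi_{_S}$ --- is again a chain map of $\ttpp$-bimodule resolutions of $R\ott S$, lifting the identity on $R\ott S$ since each of $\EZt$, $\iota_{_R}$, and $\iota_{_S}$ does.

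Next I would check the one remaining hypothesis of \cref{bootstrap}, namely that $\iota$ is injective. This is immediate from \cref{AW-EZforreducedbar}: applying $\AWt$ yields $\AWt\,\iota=(\AWt\,\EZt)\,(\iota_{_R}\ot\iota_{_S})=\iota_{_R}\ot\iota_{_S}$, which is injective because $\iota_{_R}$ and $\iota_{_S}$ are injective $k$-linear maps and the tensor product is taken over the field $k$; hence $\iota$ is injective.

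Now \cref{bootstrap} applies directly, with $A=\ttp$, $P\sd=C\sd\ott D\sd$, $P'\sd=\BB_{\ttp}$, and $\psi=\iota$: the standing assumption that $\BB_{\ttp}/\Ima\iota$ is projective as an $\ttpp$-bimodule in each degree is precisely the projectivity hypothesis of that lemma, so it produces a chain map $\pi:=\psi':\BB_{\ttp}\to C\sd\ott D\sd$ of bimodule resolutions of $\ttp$ with $\pi\,\iota=1_{C\ott D}$, as desired. For the graded statement, when $\tau$ is (strongly) graded and $\iota_{_R},\iota_{_S}$ are graded maps of graded resolutions, both $\iota_{_R}\ot\iota_{_S}$ and $\EZt$ preserve degree (the latter because it is built from the graded twisting map together with shuffles and face maps, all degree preserving), so $\iota$ is graded and the graded version of \cref{bootstrap} delivers a graded $\pi$.

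I expect the genuinely hard work to be already behind us: the projective-quotient bootstrap of \cref{bootstrap} and the splitting identity $\AWt\,\EZt=1$ of \cref{AW-EZforreducedbar} do the heavy lifting. The only points needing care are routine bookkeeping --- confirming that $C\sd\ott D\sd$ is indeed a resolution so that \cref{bootstrap} is applicable, and that $\iota$ lifts the identity rather than some other $\ttpp$-bimodule endomorphism of $\ttp$ --- both of which follow from the earlier sections with no new ideas.
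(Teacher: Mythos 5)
Your proposal is correct and follows essentially the same route as the paper: the paper's proof is exactly an application of \cref{bootstrap} with $A=\ttp$ and $\psi=\iota=\EZt(\iota_{_R}\ot\iota_{_S})$, noting that $\EZt$ is injective (via $\AWt\,\EZt=1$) and that $\iota_{_R}\ot\iota_{_S}$ is injective as a tensor product of injective maps over a field. Your variant of the injectivity check (applying $\AWt$ to the composite) is an equivalent repackaging of the same observation.
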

\begin{proof}
  We apply \cref{bootstrap} with $A=\ttp$ and $\psi=\iota$ noting that
$\EZt$ (see \cref{AW-EZforreducedbar}) is injective
as is $\iota_R\ot \iota_S$ since it is a tensor product
of injective maps  over a field.
 \end{proof}

%%%%%%%%%%%%%%%%%%%%%%%%%%%%%%%%%%%%%%%%%
 \subsection*{Injective chain maps preserving spanning sets}

 \cref{conversionspecial}
gives the following conversion maps.

%%%%%%%%%%%%%%%%%%%%%%%%%%%%%%%%%%
%%%%%%%%%%%%%%%%%%%%
\begin{thm}\label{conversionfree}
Suppose
bimodule resolutions $C\sd$ and $D\sd$ of $k$-algebras $R$ and $S$,
respectively, are compatible with
the twisting map $\tau$.
 Suppose there exist  
 injective compatible chain maps 
  $$\iota_{_R}: C\sd\longrightarrow \BB_R\quad\text{ and }\quad
  \iota_{_S}: D\sd \longrightarrow \BB_S$$
  with
  $\Ima \iota_{_R}$ and   $\Ima \iota_{_S}$
  spanned by their intersections with
  $k\ot \bar R^{\ot n}\ot k$ and $k\ot \bar S^{\ot n}\ot 
  k$ as bimodules over $R$ and $S$, respectively,
  for each $n$.
 Then there exist chain maps 
$$
\pi:\BB_{R\ot_{\tau} S}   
\longrightarrow C\sd \ot _\tau D\sd 
\quad\text{ and }\quad
\iota: C\sd \ot _\tau D\sd \longrightarrow 
\BB_{R\ot_{\tau} S}
\, 
$$ 
of bimodule resolutions
of $\ttp$ with composition the identity:
$$
\pi \ \iota %: (C \ot ^\tau D)_n \rightarrow (C \ot ^\tau D)_n
= 1_{C \ot_{\tau} D}\, .
$$
If $R$ and $S$ are graded algebras with $C\sd$ and $D\sd$ graded
resolutions, then the chain maps $\iota$, $\pi$ are graded maps provided 
$\iota_{_R}$ and $\iota_{_S}$ are graded maps.
\end{thm}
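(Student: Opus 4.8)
\subsection*{Proof sketch for \cref{conversionfree}}

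The plan is to reduce everything to \cref{conversionspecial}: for $\iota=\EZt(\iota_{_R}\ot\iota_{_S})$, once we know that $\BB_{R\ott S}/\Ima\iota$ is projective as an $\ttpp$-bimodule in each degree, that theorem produces the chain map $\pi$ with $\pi\,\iota=1_{C\ott D}$, the fact that $\iota$ is a chain map of bimodule resolutions follows from \cref{lem:alphaRalphaS} together with \cref{AW-EZforreducedbar}, and the graded refinement is inherited as well. So the entire content is to locate $\Ima\iota$ inside $\BB_{R\ott S}$ precisely enough to compute the cokernel; I expect it to be \emph{free} in each degree up to a projective summand.

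First I would exploit the splitting behind \cref{AW-EZforreducedbar}. Since $\AWtBB\,\EZtBB=1$, the composite $p=\EZtBB\,\AWtBB$ is an idempotent $\ttpp$-bimodule endomorphism of $\BB_{R\ott S}$, so in each degree $n$ there is an $\ttpp$-bimodule decomposition $(\BB_{R\ott S})_n=\Ima(\EZtBB)_n\oplus(\ker\AWtBB)_n$, with $(\EZtBB)_n\colon(\BB_R\ott\BB_S)_n\xrightarrow{\ \sim\ }\Ima(\EZtBB)_n$. As $(\BB_{R\ott S})_n$ is a free $\ttpp$-bimodule (a bar term), the summand $(\ker\AWtBB)_n$ is projective. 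Since $\iota=\EZtBB\,(\iota_{_R}\ot\iota_{_S})$, we have $\Ima\iota_n=(\EZtBB)_n\big(\Ima(\iota_{_R}\ot\iota_{_S})_n\big)$, which under the above isomorphism corresponds to $\Ima(\iota_{_R}\ot\iota_{_S})_n\subseteq(\BB_R\ott\BB_S)_n$. Hence
\[
(\BB_{R\ott S})_n/\Ima\iota_n\ \cong\ \big((\BB_R\ott\BB_S)_n/\Ima(\iota_{_R}\ot\iota_{_S})_n\big)\ \oplus\ (\ker\AWtBB)_n
\]
as $\ttpp$-bimodules, and it remains to show the first summand is free.

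For that I would invoke the hypothesis on spanning sets. Writing $U_\ell:=\Ima\iota_{_R}\cap(k\ot\bar R^{\ot\ell}\ot k)\subseteq\bar R^{\ot\ell}$, the condition on $\Ima\iota_{_R}$ says exactly that $\Ima(\iota_{_R})_\ell=R\ot U_\ell\ot R$ inside $(\BB_R)_\ell=R\ot\bar R^{\ot\ell}\ot R$, and similarly $\Ima(\iota_{_S})_m=S\ot W_m\ot S$ for $W_m\subseteq\bar S^{\ot m}$. Choose vector-space complements $\bar R^{\ot\ell}=U_\ell\oplus U_\ell'$ and $\bar S^{\ot m}=W_m\oplus W_m'$, homogeneous in the graded case, together with bases adapted to these decompositions. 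Exactly as for the intermediate complex $Y\sd$ in \cref{sec:AW-EZ} (this is where bijectivity of $\tau$ is used), the term $(\BB_R\ott\BB_S)_n=\bigoplus_\ell R\ot\bar R^{\ot\ell}\ot R\ot S\ot\bar S^{\ot(n-\ell)}\ot S$ is a free $\ttpp$-bimodule on the corresponding $k$-basis of $\bigoplus_\ell 1_R\ot\bar R^{\ot\ell}\ot1_R\ot1_S\ot\bar S^{\ot(n-\ell)}\ot1_S$; and $\Ima(\iota_{_R}\ot\iota_{_S})_n=\bigoplus_\ell\big(R\ot U_\ell\ot R\ot S\ot W_{n-\ell}\ot S\big)$ is the free sub-bimodule generated by the sub-basis coming from $\bigoplus_\ell U_\ell\ot W_{n-\ell}$. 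Therefore the quotient is free on the complementary sub-basis, and by the displayed decomposition $(\BB_{R\ott S})_n/\Ima\iota_n$ is free-plus-projective, hence projective; applying \cref{conversionspecial} finishes the proof, the graded statement following from \cref{conversionspecial} once the complements $U_\ell',W_m'$ are chosen homogeneous.

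The main obstacle I anticipate is the bookkeeping in the last paragraph: one must check carefully that the \emph{twisted} $\ttpp$-bimodule structure on $(\BB_R\ott\BB_S)_n$ really is free on the stated $k$-basis, and that restricting and quotienting along a sub-basis is compatible with this twisted action — both of which reduce to bijectivity of $\tau$ exactly as in the treatment of $Y\sd$. Everything else is formal manipulation of the idempotent splitting and of \cref{conversionspecial}.
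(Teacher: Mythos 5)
Your argument is correct, but it reaches the projectivity of $\BB_{R\ott S}/\Ima \iota$ by a genuinely different route than the paper. The paper transports everything into $\BB_{R\ott S}$ at the outset: using a commutative diagram built from the compatibility of $\iota_{_R}$, $\iota_{_S}$ with the bijective maps $\tau_{_C}$, $\tau_{_D}$, $\tau_{_{\BB_R}}$, $\tau_{_{\BB_S}}$, it shows that $\Ima(\iota_{_R}\ot\iota_{_S})$ is generated as an $\ttpp$-bimodule by elements which $\EZt$ carries into $k\ot\bar A^{\ot n}\ot k$ (for $A=\ttp$); hence $\Ima\iota_n = A\ot U\ot A$ for a subspace $U\subseteq \bar A^{\ot n}$, and the cokernel is visibly free on a complement $U^c$, because the bimodule structure on $\BB_A$ is ordinary outer multiplication. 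You instead split $(\BB_{R\ott S})_n=\Ima(\EZt)_n\oplus(\ker\AWt)_n$ via the idempotent $\EZt\,\AWt$, note that $(\ker\AWt)_n$ is projective as a summand of a free module, and compute the remaining cokernel inside $\BB_R\ott\BB_S$; this is a clean decomposition, but it shifts the real work into the ``bookkeeping'' you flag at the end, namely that the \emph{twisted} bimodule structure on $(\BB_R\ott\BB_S)_n$ is free on a basis adapted to $U_\ell\oplus U_\ell'$ and $W_m\oplus W_m'$ and that $\Ima(\iota_{_R}\ot\iota_{_S})_n$ is exactly the free sub-bimodule on the sub-basis of $\bigoplus_\ell U_\ell\ot W_{n-\ell}$. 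One caution there: this step needs more than bijectivity of $\tau$ alone --- it uses the compatibility of $\iota_{_R}$ and $\iota_{_S}$ with the twisting, which is what guarantees that $\tau_{_{\BB_R}}$ restricts to a bijection $S\ot\Ima\iota_{_R}\to\Ima\iota_{_R}\ot S$ (and likewise for $\iota_{_S}$), so that the sub-bimodule generated by your sub-basis really equals $\bigoplus_\ell R\ot U_\ell\ot R\ot S\ot W_{n-\ell}\ot S$. That verification is precisely the content of the paper's commutative diagram, so the two proofs coincide at their technical core and both terminate in \cref{conversionspecial}; the graded refinement goes through in either version once the complements are chosen homogeneous.
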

\begin{proof}
  Let $A=\ttp$ and again note that the $A$-bimodule map 
  $\iota=\EZt(\iota_{_R}\ot \iota_{_S})$ is injective (see the proof
  of  \cref{conversionspecial}).
  For each $n\geq 0$, set
  $$(\BB_R')_n=k\ot \bar R^{\ot n}\ot k\ \subset (\BB_R)_n
  \quad\text{ and }\quad
  (\BB_S')_n=k\ot \bar S^{\ot n}\ot k\ \subset (\BB_S)_n
  \, .
  $$
  Let $C'_n$ and $D'_n$ be sets spanning $C_n$ and $D_n$ as bimodules 
  over $R$ and $S$, respectively, with
  $$
  \iota_{_R}(C_i')\subset \BB_R'
\quad\text{ and }\quad
\iota_{_S}(D_i')\subset \BB_S'
\, .
  $$
  
  As twisting preserves the identity of $R$ and $S$,
  the compatibility maps $\tau_{_{\BB_R}}$ and $\tau_{_{\BB_S}}$
  (see   \cref{BarCompatible}) preserve
  $\BB_R'$ and $\BB_S'$.
  Thus
  the compatibility of $\iota_{_R}$ and $\iota_{_S}$
  give a commutative diagram
  for each $i,j\geq 0$,
$$
  \begin{tikzcd}[row sep=5ex, column sep=17ex, ampersand replacement=\&]
    %\begin{tikzcd}[row sep=5ex, column sep=5ex, text height=1ex, text depth=0.25ex, ampersand replacement=\&]
  %Weird ampersand option is so can cut and paste into Beamer and use code
    R\ot C_i'\ot R\ot S\ot D_j'\ot S
    \arrow{r}{1\ot \iota_{_R}\ot 1\ot 1\ot \iota_{_S}\ot 1}
    \arrow[swap]{d}{1\ot 1\ot \tau\ot 1 \ot 1}
    \&
    R\ot (\BB_R')_i \ot R \ot S\ot (\BB_S')_j\ot S
     \arrow{d}{1\ot 1\ot \tau\ot 1\ot 1}
   %  \arrow[swap, dashed]{d}{}
   \\
   R\ot C_i'\ot S\ot R\ot D_j' \ot S
   \arrow{r}{1\ot \iota_{_R}\ot 1\ot 1\ot \iota_{_S}\ot 1}
    \arrow[swap]{d}{1\ot \tau_{_{C}}\ot \tau_{_{D}}\ot 1 }
   \&
   R\ot (\BB_R')_i \ot S \ot R\ot (\BB_S')_j\ot S
    \arrow[]{d}{1\ot \tau_{_{\BB_R}}\ot \tau_{_{\BB_S}}\ot 1 }
    \\
    R\ot S\ot C_i\ot D_j\ot R\ot S
    \arrow{r}{1\ot 1\ot \iota_{_R}\ot \iota_{_S}\ot 1\ot 1}
    \arrow{d}
    \&
    R\ot S\ot (\BB_R')_i\ot (\BB_S')_j\ot R\ot S
    \arrow{d}
    \\
    C_i\ot D_j
    \arrow{r}{\iota_{_R}\ot\iota_{_S}}
    \&
 (\BB_R)_i \ot  (\BB_S)_j 
     \end{tikzcd}
$$
where the unlabeled arrows indicate the bimodule
action of $R\ot S$ 
after identifying it with $A$ as a vector space.

Any element of $C_i\ot D_j$ lies in the image
of the composition of the maps on the left side of the diagram
since $\tau_{_C}$ and $\tau_{_D}$ are compatibility maps,
and hence its image under $\iota_{_R}\ot \iota_{_S}$
lies in the image of the maps on the right side,
i.e., in the 
$A$-bimodule spanned by $(\BB'_R)_i\ot (\BB_S')_j$.
But the chain map $\EZt$ takes the $k$-subspace
$$
(\BB'_R)_i\ot (\BB_S')_j
\quad\text{ of }\quad
(\BB_R)_i\ot (\BB_S)_j
$$
into
the $k$-subspace $$k\ot \bar A^{\ot (i+j)}\ot k
\quad\text{ of }\quad
(\BB_A)_{i+j}$$ by construction.
Hence
in each degree $n$, the $A$-bimodule map
$$\iota_n: (C\ott D)_n \longrightarrow (\BB_A)_n$$
 has image spanned as an $A$-bimodule by a $k$-vector subspace $U$
of $k\ot \bar A^{\ot n}\ot k$.

We choose a
$k$-vector space complement $U^c$ to $U$ in
$k\ot \bar A^{\ot n}\ot k$.  Then a $k$-vector space basis
for $U^c$
gives a free $A$-bimodule basis for $\BB_A/\Ima \iota$
by construction.
The  result then follows from \cref{conversionspecial}.

If $R$ and $S$ are graded, $C\sd$ and $D\sd$ are graded resolutions,
and the chain maps 
$\iota_{_R}$, $\iota_{_S}$ in each degree
define graded maps,
then $\iota$ and $\pi$ in each degree are also
graded maps by construction.
\end{proof}

%%%%%%%%%%%%%%%%%%%%%%%%%%%%%%%%%%%%%%%%%%%%%%%%%%%%%%%%%%%%%%%%%%

\section{Applications: Smash products}
\label{sec:smashproducts}

We now specialize to smash product algebras arising from 
actions of groups and Hopf algebras.
In the next section, we further specialize to actions on Koszul algebras
which will be used to explore deformations in~\cite{SW-DeformationTheoryHopfActions}.
Specifically, there we will use 
conversion chain maps established here to describe PBW deformations of
smash product algebras $R\# H$
arising from Hopf algebras acting on Koszul algebras,
giving
Poincar\'e-Birkhoff-Witt conditions 
for Hopf-Koszul Hecke algebras.
Note that in comparison to~\cite{quad} and~\cite{SW-grouptwistedAWEZ},
we have chosen to write elements of the Hopf algebra (or group algebra)
on the right so that a smash product $R\# H$ is identified with a
twisted
tensor product $R\ott H$.

%%%%%%%%%%%%%%%%%%%%%%%%%%%%%%%%%%%%%%%%%
\subsection*{Hopf algebra}
Let $\Ho$ be a Hopf algebra over the field $k$ 
with bijective antipode $\gamma$, 
coproduct $\Delta$, 
and counit $\varepsilon$.
We use standard Sweedler notation to write
the coproduct $\Delta$  symbolically:
$\Delta(h)=\sum h_1\ot h_2$.
See \cite{Radford} or \cite{Montgomery1993}
for basic notions. 

\subsection*{Hopf algebra actions}
Let $R$ be a $k$-algebra carrying a Hopf action of $\Ho$, i.e.,
$R$ is a (left) Hopf module algebra over $\Ho$. 
We denote the action of $\Ho$ on $R$ by left superscript,
$r\mapsto\ {}^{h} r$.  
Thus $R$ is a $k$-algebra which is a left $\Ho$-module
with
$\, ^{h}(rr')= \sum \, ^{h_1}(r)\, ^{h_2}(r')$
and 
$\ ^{h}1_R = \epsilon(h) 1_R$
for $h$ in $\Ho$ and $r,r'$ in $R$.
We use the induced {\em right action} of $\Ho$ on $R$ as well given by
$r \cdot h = {}^{\igamma(h)}r$.
When $R$ is graded, we assume the action of $\Ho$ preserves
the grading, i.e.,
$\deg( ^{h}r)= \deg(r)$ for homogeneous $r$ in $R$.

%%%%%%%%%%%%%%%%%%%%%%%%%%%%%%%%%%%%%%%%%
\subsection*{Smash product algebra}
The smash product algebra
$R\# \Ho$ is the free $R$-module with free basis given by
a vector space basis of $\Ho$ 
and multiplication
\[
   (rh) \cdot (r'h') = \sum r ({}^{h_1}r')\, h_2 h'
   \quad\text{for all}\quad r,r' \in R\ \text{ and }\ h, h'\in H . 
\]
We may realize the smash product algebra as a twisted tensor product:
$$ R\# \Ho \cong R \ott H $$
for twisting map
$$
{}_{}\hspace{-9ex}
\tau: H\ot R  \longrightarrow R\ot H
\quad\text{ defined by }\quad 
h\ot r \longmapsto \ \sum { ^{h_1} r} \ot h_2 \, 
$$
and inverse twisting map 
$$
\tau^{-1}: R\ot H \longrightarrow H\ot R
\quad\text{ defined by }\quad 
r\ot h \longmapsto \sum h_2\ot \, ^{\igamma(h_1)} r
$$
for $h\in \Ho$ and $r\in R$.

To see that $\tau$ is invertible with inverse as indicated, note that
for all  $h$ in $H$,
$$\varepsilon(h) = \igamma\big(\varepsilon(h)\big)
= \igamma\big( \sum \gamma(h_1) h_2\big)
= \sum \igamma(h_2)\, h_1 
$$
as  $\igamma$ is an algebra antimorphism, and
thus
$$\sum h_3\ot \igamma(h_2)\, h_1
=\sum h_2\ot \epsilon(h_1)
=\sum \epsilon(h_1) \, h_2\ot 1_H
=h\ot 1_H
\,, $$
using Sweedler notation
$(1_\Ho\ot \Delta)\Delta(h)=(\Delta\ot 1_\Ho)\Delta(h)=\sum h_1\ot
h_2\ot h_3$.  
This then implies that for $r$ in $R$
$$\itau\, \tau(h\ot r) 
  =
  \sum h_{3} \ot \, ^{\igamma(h_{2})h_1} r
  =
  \sum h_{2} \ot \, ^{\epsilon(h_1)1_H} r
  =
    \sum h_{2} \epsilon(h_1)   \ot r
    =h\ot r\,.
 $$
An analogous argument using $\epsilon(h)
=\sum h_2\, \igamma(h_1)$
confirms that $\itau$ thus defined is right inverse to $\tau$.

%%%%%%%%%%%%%%%%%%%%%%%%%%%%%%%%%%%%%%
%\subsection*{AW, EZ maps for smash product algebras with Hopf algebras}
%FILL IN with corollary of AW EZ maps for this special case. 

%%%%%%%%%%%%%%%%%%%%%%%%%%%%%%%%%%%%%%
\subsection*{Resolutions carrying the Hopf action}\label{subsec:tpr}
We establish some terminology for resolutions 
of the algebra $R$
that respect the Hopf action and resolutions of 
the Hopf 
algebra $\Ho$ which are also Hopf modules.

\begin{definition}\label{carries}{\em 
We say that an $R$-bimodule resolution $C\sd$ of $R$,
\begin{equation*}
\begin{aligned}
& C\sd\ :\ \  \cdots \longrightarrow C_2 \longrightarrow C_1 \longrightarrow C_0
\longrightarrow 0\ ,
\end{aligned}
\end{equation*}
{\em carries the Hopf action of $\Ho$}
when each $C_{i}$ is a left $H$-module,
$c\mapsto \, ^hc$, with
\begin{equation}\label{eqn:asxs}
  \, ^h(r c \,r') = \sum   {}^{h_1}r \
                        {}^{h_2}c\
                        {}^{h_3}r' 
                        \quad\text{for $h$ in $\Ho$, $r,r'$ in $R$,
                          and $c$ in $C_i$}
\end{equation}
and differential $\del^{}_C$ and augmentation map $C_0\rightarrow R$
both  left $\Ho$-module homomorphisms.
}
\end{definition}

\vspace{2ex}

For any $H$-bimodule resolution $D\sd$ of $H$,
we view each $H\ot D_i$ as an $H$-bimodule
via the diagonal action, i.e., 
\begin{equation}\label{actionHD}
  h(a\ot y)h'=\sum   h_1a \, h'_1\ot   h_2\, y \, {h'_2}
\qquad\text{ for $h,a, h'\in \Ho$ and $y \in D_i$}
\, . 
\end{equation}

\vspace{2ex}

\begin{definition}\label{HopfResolution}{\em 
  A {\em left comodule bimodule resolution} of the Hopf 
  algebra $\Ho$
is an $H$-bimodule resolution $D\sd$ of $\Ho$, 
\begin{equation*}
\begin{aligned}
%\quad 
D\sd\ : \ \ \cdots \longrightarrow D_2 \longrightarrow D_1 \longrightarrow D_0 
\longrightarrow 0\ , 
\end{aligned}
\end{equation*}
with 
each $D_i$ a left $\Ho$-comodule 
via comodule maps that are $\Ho$-bimodule 
homomorphisms and with
differential $\del^{}_D$ and
augmentation $D_0\rightarrow \Ho$ 
both  $\Ho$-comodule 
homomorphisms. 
}
\end{definition}

\vspace{2ex}

\begin{remark}\label{eqn:aca}
  {\em
    We denote    the left $\Ho$-comodule map on
    a left comodule bimodule resolution $D\sd$
    of $\Ho$
    by $\rho: D\sd\rightarrow \Ho\ot D\sd$
    written using Sweedler notation as
    $y\mapsto \sum y_1\ot y_2$. Then
    as $\rho$ is an $\Ho$-bimodule 
    homomorphism (see \cref{actionHD}),
\begin{equation*}
   \rho ( h y h') = \sum h_1\, y_{1}\, h'_1 \ot h_2\, y_2\, h'_2
   \quad\text{ for } y\in D_i, h,h'\in \Ho, \text{ and all degrees } i
   \, .
 \end{equation*}
}
\end{remark}

\vspace{2ex}

We now observe that
resolutions respecting the action of a Hopf algebra $\Ho$
are compatible with the twisting map $\tau$ recording the action.

%%%%%%%%%%%%%%%
\begin{lemma}\label{compatibleHopfresolutions}
Let $\Ho$ be a Hopf algebra acting on an algebra $R$. 
 Any bimodule resolution $C\sd$ of $R$ carrying the Hopf action of $\Ho$
   is compatible with the twisting map $\tau:H\ot R\rightarrow R\ot H$ via a chain map  
$\tau^{}_{_C}:H\ot C\sd\rightarrow C\sd\ot H$ 
given by 
$$
    \tau^{}_{_C}(h\ot x) = \sum  {}^{h_1} x\ot h_2  
    \quad\text{for all $h\in \Ho$, $x\in C\sd\, $.}
$$
Any left comodule bimodule resolution $D\sd$ of $\Ho$ is compatible with
$\tau$ via a chain map $\tau^{}_{_D}: D\sd\ot R \rightarrow R\ot D\sd$ given by 
$$
   \tau^{}_{_D}(y\ot r) = \sum {}^{y_1}r\ot y_2 
   \quad\text{ for all $y\in D\sd$, $r\in R$}\, .
   $$
\end{lemma}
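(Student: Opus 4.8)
The plan is to verify directly that $\tau_{_C}$ satisfies the four requirements packaged into \cref{CompatibleResolutionsDef,CompatibleResolution}: that in each degree it is a bijective $k$-linear map, that it is a chain map $H\ot C\sd\to C\sd\ot H$, that it lifts the twisting map $\tau$, and that it commutes with the $R$-bimodule structure of $C\sd$ and with multiplication in $H$. I would then obtain the corresponding statement for $D\sd$ by the same recipe applied to $\tau^{-1}:R\ot H\to H\ot R$, exploiting the left--right symmetry noted right after \cref{CompatibleResolutionsDef}: it suffices to produce a bijective chain map $(\tau^{-1})_{_D}:R\ot D\sd\to D\sd\ot R$ lifting $\tau^{-1}$ and compatible with the $H$-bimodule structure of $D\sd$ and with multiplication in $R$, namely $r\ot y\mapsto \sum y_2\ot {}^{\igamma(y_1)}r$, whose inverse is the claimed map $\tau_{_D}$.

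For $C\sd$: since $\del_C$ and the augmentation $C_0\to R$ are left $H$-module homomorphisms (\cref{carries}), one has $\del_C({}^{h_1}x)\ot h_2 = {}^{h_1}(\del_C x)\ot h_2$, so $\tau_{_C}$ commutes with the differentials and, in degree $-1$ where $C_{-1}=R$ carries the Hopf action, restricts to $h\ot r\mapsto \sum {}^{h_1}r\ot h_2 = \tau(h\ot r)$; thus $\tau_{_C}$ is a chain map lifting $\tau$. Compatibility with multiplication in $H$ --- the pentagon of \cref{CompatibleResolution} --- is the $H$-module identity ${}^h({}^{h'}x)={}^{hh'}x$ combined with coassociativity. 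Compatibility with the $R$-bimodule structure --- the lower hexagon of \cref{CompatibleResolution} --- is exactly \cref{eqn:asxs}, namely ${}^h(rxr')=\sum {}^{h_1}r\,{}^{h_2}x\,{}^{h_3}r'$, again together with coassociativity: chasing $h\ot r\ot x\ot r'$ along the two sides produces $\sum {}^{h_1}r\,{}^{h_2}x\,{}^{h_3}r'\ot h_4$ either way. Finally $\tau_{_{C_i}}$ is bijective, with inverse $x\ot h\mapsto \sum h_2\ot {}^{\igamma(h_1)}x$; this is checked by the same antipode computation given above for $\tau^{-1}\tau=1$ and $\tau\tau^{-1}=1$, now with the $H$-module $C_i$ in place of $R$ (so one uses $\sum \igamma(h_2)h_1=\varepsilon(h)1_H$ and $\sum h_2\igamma(h_1)=\varepsilon(h)1_H$).

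For $D\sd$: write the coaction as $\rho(y)=\sum y_1\ot y_2$ with $y_1\in H$ and $y_2\in D_i$ (\cref{eqn:aca}). That $\del_D$ and the augmentation $D_0\to H$ are $H$-comodule maps gives $\sum (\del_D y)_1\ot (\del_D y)_2 = \sum y_1\ot \del_D(y_2)$, whence $\tau_{_D}$ (equivalently $(\tau^{-1})_{_D}$) is a chain map; in degree $-1$ the coaction on $H$ is $\Delta$, so $(\tau^{-1})_{_D}$ there equals $\tau^{-1}$, i.e., it lifts $\tau^{-1}$. Compatibility with multiplication in $R$ follows from comodule coassociativity and ${}^{hh'}r={}^h({}^{h'}r)$; compatibility with the $H$-bimodule structure of $D\sd$ uses crucially that $\rho$ is an $H$-bimodule homomorphism in the form $\rho(hyh')=\sum h_1y_1h'_1\ot h_2y_2h'_2$ recorded in \cref{eqn:aca}, together with the antipode identities. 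Bijectivity of $\tau_{_{D_i}}$ is again the antipode computation from the \emph{Smash product algebra} paragraph, with $D_i$ in place of $R$. The only real bookkeeping is in the bimodule-structure hexagons, and there the $D\sd$ case is the most delicate, since the coproduct, the coaction, and (for bijectivity) the antipode all appear simultaneously; once the diagram is unwound, however, each constituent square commutes by a single one of the axioms in \cref{carries,HopfResolution} or by coassociativity, so no genuine difficulty arises.
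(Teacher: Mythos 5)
Your proposal is correct and follows essentially the same route as the paper: a direct verification of the diagrams in \cref{CompatibleResolution} using the axioms of \cref{carries,HopfResolution}, with the $D\sd$ case handled through $(\tau^{-1})_{_D}$ and the antipode identities; you are in fact more explicit than the paper about bijectivity and the inverse maps. One small attribution slip: the pentagon for $\tau_{_C}$ (compatibility with $m_{_H}$) rests on $\Delta$ being an algebra homomorphism rather than on coassociativity, which is the reason the paper cites.
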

\begin{proof}
   The map  $\tau^{}_{_C}$ is a chain map
 lifting the augmentation map from $C\sd$ to $R$ 
since the differential $\del^{}_C$ is  
an $\Ho$-module homomorphism. 
Commutativity with the multiplicative structure of $R$ (see
\cref{CompatibleResolution}) holds since the
comultiplication $\Delta$ is an algebra homomorphism, and
commutativity with the bimodule structure holds by \cref{eqn:asxs}.
Hence  $C\sd$ is compatible with $\tau$.
One can check that the map  $\tau^{}_{_D}$
defines a chain map lifting the comultiplication $\Delta$ of $\Ho$
since the differential $\del^{}_D$ on $D\sd$
is an $H$-bimodule map and also an
$\Ho$-comodule map.
Commutativity with the bimodule structure of $D\sd$ and with multiplication 
can be verified using \cref{eqn:aca}.
Hence
$D\sd$ is also compatible with $\tau$.
\end{proof}

\vspace{2ex}

%%%%%%%%%%%%%%%%%%%%%%%%%%%%%%%%5 
\begin{example}\label{barcarriesHopfaction}
  {\em
The bar resolution $\B_R$ and reduced bar resolution $\BB_R$ 
carry the Hopf action of $\Ho$ 
with left $\Ho$-module action
given as usual on $\B_R$ via the coproduct $\Delta$ of $\Ho$,
$$
\, ^{h} (r_1\ot\cdots\ot r_{n})
=\sum
\, ^{ h_1}r_1\ot\cdots\ot
\ ^{ h_n}r_n
\quad\text{for $r_i$ in $R$ and $h$ in  $\Ho$} .
$$
This induces a left $H$-module action on $\BB_R$
since $ {}^h1_R = \varepsilon(h)1_R$ for $h$ in $H$.
\cref{BarCompatible} and \cref{compatibleHopfresolutions}
give the same compatibility maps
because the antipode of a Hopf algebra 
and its inverse are coalgebra antimorphisms:
Explicitly,
      $$\tau^{}_{_{\BB_R}}:H\ot \BB_R\longrightarrow \BB_R\ot H,
    \quad 
    h\ot c \mapsto 
   \sum  {}^{h_1} c\ot h_2  
   \quad\text{for $h\in \Ho$, $c\in \BB_R $}
   \, .
   $$
   }
\end{example}

\vspace{2ex}

 %%%%%%%%%%%%%%%%%%%%%%%%%%%%%%%%5 
\begin{example}\label{BarIsComoduleRes}
  {\em
The bar resolution $\B_\Ho$ 
and reduced bar resolution  $\BB_\Ho$ of $\Ho$
are both left comodule bimodule resolutions of $\Ho$
with the $\Ho$-comodule map $\rho$ given on $\B_{\Ho}$ by
$$
   \rho ( h^0\ot h^1 \ot\cdots \ot h^{n+1}) 
   =  (h^0_1 \cdots h^n_1 h^{n+1}_1)\ot 
     h^0_2\ot \cdots  \ot h^{n+1}_2 
     \quad\text{for $h^i\in \Ho$}
     \, 
     $$
     and induced (using projection) on $\BB_{\Ho}$ using
     the fact that $\Delta(1_H)=1_H\ot 1_H$.
     \cref{BarCompatible} and \cref{compatibleHopfresolutions}
give the same compatibility maps:
Explicitly,
  \[
    \begin{aligned}\tau^{}_{_{\BB_H}}
      : \BB_H\ot R &\longrightarrow R\ot \BB_H, \\
  \
  (h^0\ot\cdots\ot h^{n+1})\ot r &\longmapsto
\sum \ ^{ (h^{0}_{1}\cdots h^{n+1}_1)}r \ot (h^0_2\ot \cdots\ot h^{n+1}_2), 
\end{aligned}
\]
for $h^0, \ldots, h^{n+1}$ in $\bar H\subset H$,
a sum for $n+2$ indices using Sweedler notation. 
(For brevity, we have suppressed notation for a 
projection map $\pr^{}_{\BB_H}:\B_H\rightarrow \BB_H$.)
See \cref{SymmetricAlgebraGroup}
for the special case where $H=kG$ for a finite group $G$.
}
\end{example}
%%%%%%%%%%%%%%%%%%%

\vspace{2ex}

We will see in the next section
that Koszul resolutions also carry the Hopf action in the graded case.

%%%%%%%%%%%%%%%%%%%%%%%%%%%%%%%%%%%
\subsection*{Twisted product resolution for Hopf actions}
We form the twisted product resolution from the resolutions
$C\sd$ and $D\sd$.
Assume $C\sd$ is a resolution of $R$ carrying the Hopf action of $\Ho$
and
$D\sd$ is a  left comodule bimodule resolution
of $\Ho$.
By \cref{compatibleHopfresolutions}, $C\sd$ and $D\sd$
are compatible with the twisting map $\tau:H\ot R\rightarrow R\ot H$,
and we form
the twisted product resolution $C\sd \ott D\sd$
as 
the total complex of the double complex 
$C\sd\ot D\sd$ (see
(\ref{cx-X}) and
(\ref{twistedproductresolution}))
%\begin{equation}\label{eqn:res-X}
%X_n=(C\sd\ott D\sd)_n = \bigoplus_{i+j=n} C_i \ot D_j\, 
%\end{equation}
with $(R\# \Ho)$-bimodule structure on each $C_i \ot D_j$ 
given by \cref{bimodstructure}.
Explicitly,
the left action of $H$ and the right action of 
$R$ are given by
\[
   h \cdot (x\ot y)\cdot r
   \ = \
   \sum \, ( ^{h_1} x)\  ( ^{h_2 y_1} r) \ot h_3\, y_2\
\quad\text{ for }\ \ 
h\in \Ho,\ x\in C\sd,\ y\in D\sd,\ r\in R\, . 
\]
Then the total complex of $C\sd \ott D\sd$, augmented by $R\# \Ho$,
is indeed an exact sequence of $(R\# \Ho)$-bimodules,
see~\cite[Lemma~3.5]{SW-twisted}.
It
%Recall that the twisted product resolution $C\sd\ott D\sd$
is often a projective resolution of $R\# \Ho$ in case
$C\sd$ is a projective resolution of $R$
and $D\sd$ is a projective resolution of $H$,
see~\cite[Theorem~3.10]{SW-twisted}.  
%This is the case, for example, when $C\sd$ is a Koszul resolution
%in the graded setting 
%and $D\sd$ is the bar resolution of $\Ho$, see \cite{SW-twisted}.

%%%%%%%%%%%%%%%%%%%%%%%%%%%%%%%%%%%%
\subsection*{Conversion between resolutions
for smash products}\label{sec:conversion-2}
One wishes to navigate 
between the reduced bar resolution $\BB_{R\# H}$
and various twisted product resolutions $C\sd\ott D\sd$
of $R\# \Ho$.
We consider
compatible chain maps
$\iota_{_R} : C\sd\rightarrow \BB_R$ and
$\iota_{_H}: D\sd\rightarrow \BB_\Ho$:
$$
\hspace{5ex}
\mbox{
\entrymodifiers={+!!<0pt,\fontdimen22\textfont2>}
\xymatrixcolsep{6ex}
\xymatrixrowsep{6ex}
\xymatrix{
(\BB_R )_{n+1}
\ar[rr]^{d_{n+1}}   %\ar[d]_{\pi_{_R} }
 &  & 
( \BB_R)_{n}   %\ar[d]_{\pi_{_R} }
&  & 
 \\
C_{n+1} 
 \ar[rr]^{d_{n+1}}  \ar@<-1ex>[u]_{\iota_{_R}}  
 & & 
\ C_{n} 
\ar@<-1ex>[u]_{\iota_{_R} } 
 & &
}
}
\hspace{-7ex}
\mbox{
\entrymodifiers={+!!<0pt,\fontdimen22\textfont2>}
\xymatrixcolsep{6ex}
\xymatrixrowsep{6ex}
\xymatrix{
(\BB_{\Ho} )_{n+1}
\ar[rr]^{d_{n+1}}  %\ar[d]_{\pi_{_\Ho}}
 &  & 
(\BB_{\Ho} )_{n} %\ar[d]_{\pi_{_\Ho}}
&  & 
 \\
D_{n+1} 
 \ar[rr]^{d_{n+1}}  \ar@<-1ex>[u]_{\iota_{_\Ho}}  
 & & 
\ D_{n}
\ar@<-1ex>[u]_{\iota_{_\Ho}} 
\ . &  &
}
}
$$

\vspace{2ex}

The following is a corollary of \cref{conversionfree},
using \cref{compatibleHopfresolutions}.
%%%%%%%%%%%%%%%%%%%%
\begin{cor}\label{hopfconversion} 
  Let $\Ho$ be a Hopf algebra
  acting on an algebra $R$.
Consider any 
twisted product resolution $C\sd \ott D\sd$ of
$R \# \Ho$ twisting a bimodule resolution 
$C\sd$ of $R$ carrying the Hopf action of $\Ho$
with a left comodule
bimodule resolution
$D\sd$ of $H$.
Suppose there exist injective chain maps compatible with $\tau$
$$
 %\pi_{_R}: \BB_R \rightarrow C\sd,\quad 
\iota_{_R}: C \sd \rightarrow \BB_R
\qquad\text{ and } \qquad
 %\pi_{_H}: \BB_\Ho\rightarrow D\sd, \quad
  \iota_{_H}: D\sd\rightarrow \BB_H \quad
  $$
  with   $\Ima \iota_R$ and   $\Ima \iota_H$ spanned by 
  their intersections with 
  $k\ot \bar R^{\ot n}\ot k$ and 
    $k\ot \bar H^{\ot n}\ot k$ 
    as bimodules over $R$ and $H$, respectively, for each $n$.
    Then there exist chain maps of bimodule resolutions of $\ttp$
$$\iota: C\sd \ott D\sd\longrightarrow 
\BB_{R\# \Ho}
\quad\text{ and }\quad
\pi:\BB_{R\#\Ho}
\longrightarrow C\sd \ott D\sd
$$
with
$$\pi \ \iota
= 1_{C \ot_{\tau} D}\, .
$$
If $R$ is a graded algebra and $\Ho$ is graded concentrated in degree~0,
then the chain maps $\iota$ and $\pi$ are graded maps provided 
$\iota_{_R}$ and $\iota_{_H}$ are graded maps.
\end{cor}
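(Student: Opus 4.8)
The plan is to deduce this directly from Theorem~\ref{conversionfree} applied with $S=\Ho$, after recording that $R\#\Ho$ is the twisted tensor product $R\ott\Ho$ for the twisting map $\tau\colon\Ho\ot R\to R\ot\Ho$, $h\ot r\mapsto\sum{}^{h_1}r\ot h_2$. The first step is to supply the compatibility data. By Lemma~\ref{compatibleHopfresolutions}, since $C\sd$ carries the Hopf action of $\Ho$ it is compatible with $\tau$ via $\tau_{_C}(h\ot x)=\sum{}^{h_1}x\ot h_2$, and since $D\sd$ is a left comodule bimodule resolution of $\Ho$ it is compatible with $\tau$ via $\tau_{_D}(y\ot r)=\sum{}^{y_1}r\ot y_2$. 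By Examples~\ref{barcarriesHopfaction} and~\ref{BarIsComoduleRes}, the reduced bar resolutions $\BB_R$ and $\BB_\Ho$ carry, respectively, the Hopf action and the comodule structure, and --- crucially --- their compatibility chain maps $\tau_{_{\BB_R}}$ and $\tau_{_{\BB_\Ho}}$ produced by iterated twisting in Lemma~\ref{BarCompatible} coincide with the Hopf-theoretic ones, because the antipode $\gamma$ and its inverse are coalgebra antimorphisms. Consequently the hypothesized injective chain maps $\iota_{_R}\colon C\sd\to\BB_R$ and $\iota_{_\Ho}\colon D\sd\to\BB_\Ho$ are compatible with $\tau$ in the precise sense of Section~\ref{sec:compatiblechainmaps} relative to these maps.

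With that identification made, every hypothesis of Theorem~\ref{conversionfree} is in force for the data $R$, $S=\Ho$, $C\sd$, $D\sd$, $\iota_{_R}$, $\iota_{_\Ho}$: the resolutions are compatible with $\tau$, the chain maps $\iota_{_R}$ and $\iota_{_\Ho}$ are injective and compatible, and by assumption $\Ima\iota_{_R}$ and $\Ima\iota_{_\Ho}$ are spanned as bimodules over $R$ and $\Ho$ by their intersections with $k\ot\bar R^{\ot n}\ot k$ and $k\ot\bar\Ho^{\ot n}\ot k$. Theorem~\ref{conversionfree} then yields chain maps $\iota=\EZt\,(\iota_{_R}\ot\iota_{_\Ho})\colon C\sd\ott D\sd\to\BB_{R\#\Ho}$ and $\pi\colon\BB_{R\#\Ho}\to C\sd\ott D\sd$ of bimodule resolutions of $R\#\Ho=R\ott\Ho$ satisfying $\pi\,\iota=1_{C\ott D}$, which is the assertion.

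For the graded refinement, I would observe that if $R$ is $\N$-graded and $\Ho$ is concentrated in degree~$0$, then $\Ho$ is trivially a graded algebra, the assumption that the Hopf action preserves the grading of $R$ makes $\tau$ a graded twisting map, and $R\#\Ho=R\ott\Ho$ is graded; so if in addition $C\sd$ and $D\sd$ are graded resolutions and $\iota_{_R}$, $\iota_{_\Ho}$ are graded, the concluding sentence of Theorem~\ref{conversionfree} gives at once that $\iota$ and $\pi$ are graded. I expect no genuine difficulty in this corollary --- all the real work sits in the quoted results --- and the one point that needs attention is precisely the matching in the first paragraph of the two a priori distinct meanings of ``compatible with $\tau$'' for $\BB_R$ and $\BB_\Ho$ (the iterated-twisting version of Lemma~\ref{BarCompatible} versus the Hopf-action and comodule version of Lemma~\ref{compatibleHopfresolutions}), which is exactly what Examples~\ref{barcarriesHopfaction} and~\ref{BarIsComoduleRes} establish.
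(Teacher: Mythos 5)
Your proposal is correct and follows exactly the paper's route: the paper derives this corollary by applying \cref{conversionfree} together with \cref{compatibleHopfresolutions}, which is precisely your argument. Your extra care in matching the iterated-twisting compatibility maps of \cref{BarCompatible} with the Hopf-theoretic ones via \cref{barcarriesHopfaction} and \cref{BarIsComoduleRes} is exactly the point the paper relies on those examples for, so nothing is missing.
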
 
%%%%%%%%%%%%%%%%%%%%

%%%%%%%%%%%%%%%%%%%%%%%%%%%%%%%%%%%%%%%%%%
%%%%%%%%%%%%%%%%%%%%%%%%%%%%%%%%%%%%%%%%%%
%%%%%%%%%%%%%%%%%%%%%%%%%%%%%%%%%%%%%%%

\section{Applications to Hopf actions on Koszul algebras}
\label{sec:Hopf-Koszul}

We now specialize to a Hopf algebra $\Ho$ acting on a quadratic
algebra $R$ that is Koszul, recording the theory needed to 
simultaneously generalize some of the results of 
\cite{Khare}, \cite{SW-Koszul}, and \cite{WW} in 
our next paper~\cite{SW-DeformationTheoryHopfActions}.
All our Koszul algebras will be assumed to be
finitely generated connected graded
algebras, so we take $R$ to be the quotient of a tensor
algebra of a finite dimensional vector space in degree~$1$.
We identify $\bar{\Ho}=\Ho/k1_\Ho$ with the vector subspace
$\ker(\varepsilon)$ of $H$ (for $\varepsilon$ the counit),
choosing a corresponding section to the projection map $H\twoheadrightarrow
H/k1_H$.

%%%%%%%%%%%%%%%%%%%%%%%%%%%%%%%%%%%%%%%%%%%%%%%%%%%%%%%%%%%%%%%

\subsection*{Koszul algebra and Koszul resolution}
Suppose $R = T_k(V)/(\R)$ is generated by a finite
dimensional vector space $V$ with
{\em quadratic Koszul relations} given as some subspace
$\R\subset V\ot V$,
where $T_k(V)$ is the tensor algebra of $V$ over $k$.
We take the grading on  $T_k(V)$ with $V$ in degree $1$ and identify $V$ with the vector subspace $(V+(\R))/(\R)$ of $R$ in 
degree~1.  We also identify $\bar R=R/k1_R$ with the $k$-span of the positively 
graded 
elements of $R$ so that $V\subset \bar R$.
We will use the {\em Koszul resolution} $(K_R)\sd=K\sd$ for $R$ 
\begin{equation}
  \label{koszulres}
  K\sd\ :\ \  \cdots\longrightarrow K_3\stackrel{d_3}{\longrightarrow}
  K_2 \stackrel{d_2}{\longrightarrow} 
  K_1 \stackrel{d_1}{\longrightarrow}
  K_0 \longrightarrow R\longrightarrow 0
\end{equation}
where $K_0 = R\ot R$, $K_1=R\ot V\ot R$,
$K_2=R\ot \R \ot R$, and, more generally,
\begin{equation}\label{Koszulterms}
  K_n = R\ot \widetilde K_n
    \ot R 
    \quad\text{ for } n\geq 2\, 
    \quad\text{ where } \quad
    \widetilde K_n=\bigcap _{j=0}^{n-2} (V^{\ot j}\ot \R 
  \ot V^{\ot (n-2-j)} ) 
\end{equation}
with differential given by that for the reduced bar 
resolution under the standard inclusion map,
$\iota_R: 
K_R\longrightarrow \BB_R$,
%into the reduced bar resolution $(\BB_R)_n$
defined in degree $n$ by the canonical inclusion of $K_n$
into $R\ot {\bar R}^{\ot n}\ot R$.
%As $R$ is a quadratic algebra and $V$ lies in degree $1$,
%$\iota_R$ is an injective chain map.
% 

%%%%%%%%%%%%%%%%%%%%%%%%%%%%%%%%%%%%%%%%%
\subsection*{Koszul algebra carrying an action of a Hopf algebra}
Now assume that the action of the Hopf algebra $H$ on the Koszul algebra $R=T(V)/\R$
is graded, i.e., 
$\deg(\, ^h r) = \deg(r)$ for $r$  in $R$ homogeneous
and $h$ in $\Ho$.
We put the trivial grading on $\Ho$, i.e.,
we set $\deg h=0$ for all $h\in \Ho$.
We use the induced right action of $\Ho$ on $R$,
$r\mapsto r\cdot h=\, ^{\igamma(h)}r$ for $h$ in $\Ho$
and thus regard $R$ as a right $H$-module as well when needed.

%%%%%%%%%%%%%%%%%%%%%%%%%%%%%%%%%%%%%%%%%%%%%%%%%%%%%%%%%%%%%%%
The action of $\Ho$ on $R$ restricts to an action  
of $\Ho$ on $V$ since  it preserves the grading.
This induces an action on $V\ot V$,
$$\, ^h(v\ot w)=\sum \, ^{h_1}v\ot \, ^{h_2} w
\quad\text{ for $h$ in $\Ho$ and $v,w$ in $V$,}
$$
congruent with the action on $R$:
$\, ^h(v\ot w + (\R)) 
= \, ^h(v\ot w)+(\R) 
$ for $v$, $w$ in $V$.
Thus
the Hopf algebra $\Ho$ 
must preserve the set of quadratic Koszul relations
$\R\subset V\ot V$
defining the Koszul algebra $R=T(V)/(\R)$,
\begin{equation}
   \label{hopfpreserveskoszulrelations}
  ^h \R
\subset\R\quad \text{ for all }
h\in \Ho\, .
\end{equation}

By \cref{hopfpreserveskoszulrelations}, 
each term $K_n$ of the Koszul resolution 
of $R$ is a left $H$-module 
with action denoted $c\mapsto \, ^hc$
induced from the action of $H$ on $R\ot V^{\ot n}\ot R\supset K_n$
given by 
\begin{equation}\label{HopfActionKoszulResolution}
  ^h(r_1\ot v_2\ot \cdots \ot v_{n+1}\ot r_{n+2})=
\sum \  ^{h_1}r_1\ot\, ^{h_2}v_2\ot\cdots \ot \, ^{h_{n+1}}v_{n+1}\ot 
\, ^{h_{n+2}}r_{n+2}
\, . 
\end{equation}
The Koszul resolution 
$K\sd$ of $R$ then 
carries the action of $H$ (see Definition~\ref{carries}): 
$$
\, ^{h} (r\, c\, r')=\sum\ \, ^{h_1}r\ ^{h_2}c\ ^{h_3} r' 
\text{
  for $c$ in $K_n$, $r$, $r'$ in $R$, and $h$ in $H$.}
$$
Consequently, 
\cref{compatibleHopfresolutions} implies $K\sd$
   is compatible with $\tau$ via compatibility map
   \begin{equation}\label{KoszulIsCompatible}
     \tau^{}_{_C}:H\ot C\sd\rightarrow C\sd\ot H,
   \quad 
   h\ot c \mapsto 
   \sum  {}^{h_1} c\ot h_2  
   \quad\text{for $h\in \Ho$, $c\in C\sd\, $}
   \, .
 \end{equation}
 Alternatively, see~\cite[Proposition 2.20(ii)]{SW-twisted} noting
 that
 $\tau$ is strongly graded in this setting.

 \vspace{2ex}
 
%%%%%%%%%%%%%%%%%%%%%%%%%%%%%%%%%%%%
\begin{example}\label{SymmetricAlgebraGroup}
  {\em 
Again consider $k[V]$, the polynomial ring on $V\cong k^n$, with a linear action of a finite group
$G$
as in \cref{GroupUnshuffle,GroupExampleAWEZmaps}
and the  end of~\cref{sec:conversion}.
Let $C\sd=K\sd$ be the (bimodule) Koszul resolution of $k[V]$,
$$
  K\sd:\  \cdots \longrightarrow 
k[V]\ot \Wedge^2 V \ot k[V]
  \longrightarrow k[V]\ot V\ot k[V] \longrightarrow k[V]\ot k[V]
  \longrightarrow 0\, ,
$$
and let $D\sd=\BB_{kG}$ be the reduced bar resolution of the Hopf algebra
$\Ho=kG$.
Then $C\sd$ is a free resolution of $k[V]$ carrying the action of $kG$ and
$D\sd$ is a free left comodule bimodule resolution with $\Ho$-comodule map
$\B_{kG}\rightarrow kG\ot \B_{kG}$ given by
$$ g_0\ot\cdots \ot g_{n+1}\longmapsto (g_0\cdots g_{n+1} )\ot g_0\ot 
\cdots\ot g_{n+1}\, \quad\text{ for } g_i \in G\, .
$$
By \cref{compatibleHopfresolutions}
(also see \cref{BarCompatible}),
$C\sd$ and $D\sd$ are compatible
with $\tau$.  
}\end{example}

\vspace{2ex}

%%%%%%%%%%%%%%%%%%%%%%%%%%%%%%%%%%%%%%%%%%%
%%%%%%%%%%%%%%%%%%%%%%%%%%%%%%%%%%%%%%%%%%%%
%%%%%%%%%%%%%%%%%%%%%%%%%%%%%%%%%%%%%%%%%%%%%
\subsection*{Twisted product resolutions for Hopf algebras acting on Koszul algebras}

Let
$X\sd=K_R\ott \BB_H$ (see \cref{cx-X})
be the twisted product resolution
combining the Koszul resolution $K_R=K\sd$ of $R$ 
defined in~(\ref{koszulres})
with the reduced
bar resolution $\BB_\Ho$ of $\Ho$.
Specifically,  
\begin{equation}\label{eqn:xij}
  X_n=\bigoplus_{i+j=n}X_{i,j}
  \qquad\text{ for } X_{i,j} = (K_R)_i\ot (\Ho\ot\bar{\Ho}^{\ot j}\ot \Ho) 
  \qquad i,j\geq 0\, .  
\end{equation} 
Note here that $X\sd$ is a {\em free} $R\# \Ho$-bimodule resolution
of $R\# \Ho=R\ott H$. Indeed,
$K_R$ is a free $R^e$-module in each degree and $\BB_H$ is a free
$\Ho ^e$-module in each degree, and it can be shown directly that $X_{i,j}$
is a free $(R\# \Ho)$-bimodule 
with basis given by tensoring free basis elements of $(K_R)_i$ with those
of $(\BB_H)_j$.
Specifically, the twisting map and its inverse
(see~Section~\ref{sec:smashproducts})   
may be used
to provide an isomorphism of vector spaces
\[
 (K_R\ot \BB_H)_n \ \cong \  \bigoplus_{\ell + i =n}
   (R\# H) \ot \left( \cap_{j=0}^{\ell -2}  (V^{\ot j}\ot \R\ot V^{\ot (\ell -2 -j)}
   \right) \ot {\bar{H}}^{\ot i}\ot (R\# H)
\]
that may be checked to be 
compatible with the $(R\# H)$-bimodule structures. 
The claimed free basis
can be seen in the latter expression.
See \cite[Corollary 3.12]{SW-twisted}.
 
%\subsection*{Converting from 
%  reduced to unreduced bar resolutions}

%Then under our identifications,
%\[
%  \label{ChoiceOfSection2}
%  \Ho=\bar{\Ho}\oplus k1_\Ho ,\quad
%  R=\bar{R}\oplus k1_\Ho,
%  \quad\text{ and }\quad
%  R\ott H=\overline{R\ott H}\oplus k(1_{R\ott H})
%\]
%for $\overline{R\ott H}$ identified with the subspace
%$(\bar R\ot \bar H)\oplus (\bar R \ot H)\oplus (R\ot 
%\bar H)$ of $R\ott H$. Note here that $1_{R\ott H}=1_R \ot 1_H$.
%This gives embeddings of $R\ott H$-bimodules in each degree:
%$$(\BB_\Ho)_n\subset (\B_\Ho)_n,\quad
%(\BB_R)_n\subset (\B_R)_n, 
%\quad\text{ and }\quad
%(\BB_{R\ott H})_n \subset (\B_{R\ott H})_n
%\, .
%$$
%

%%%%%%%%%%%%%%%%%%%%%%%%%%%%%%%%%%%%%%%%%%%%%%%%%%%%%%%%%%%%%%%%
%\subsection*{Recall result previous paper}
%The twisted  Alexander-Whitney map $\AWt$
% and twisted Eilenberg-Zilber map $\EZt$ of \cref{sec:AW-EZ} 
% give us chain maps in this setting.

%%%%%%%%%%%%%%%%%%%%%%%%%%%%%%%%%%%%%%%%%%%
%%%%%%%%%%%%%%%%%%%%%%%%%%%%%%%%%%%%%%%%%%%%
%%%%%%%%%%%%%%%%%%%%%%%%%%%%%%%%%%%%%%%%%%%%%
\subsection*{Chain maps for Hopf algebras acting on Koszul algebras}

In~\cite{SW-DeformationTheoryHopfActions},
we will use the Alexander-Whitney and Eilenberg-Zilber maps
of Section~\ref{sec:AW-EZ} to describe all filtered PBW deformations
of $R\# H$ and
generalize
\cite[Theorem 2.5]{SW-Koszul}.
Results depend heavily on the next theorem 
on
the twisted product resolution
$X\sd=K_R\ott \BB_H$.
  %%%%%%%%%%%%%%%%%%%%
 \begin{thm}\label{LastThm}
   Let $X\sd=K_R\ott \BB_H$
be the twisted product resolution for 
$R\# \Ho$ that twists the Koszul resolution $K_R$ of $R$
with the reduced bar resolution 
$\BB_\Ho$ of the Hopf algebra $\Ho$. 
There are 
graded chain maps of bimodule resolutions of 
$R\# H$ 
$$
\pi: \BB_{R\#\Ho}   \longrightarrow X\sd 
\quad\text{and}\quad 
\iota: X\sd \longrightarrow  \BB_{R\#\Ho}
$$
%(for $\Ho$ in degree zero) 
with composition the identity: 
$$
\pi \, \iota %: (C \ot ^\tau D)_n \rightarrow (C \ot ^\tau D)_n 
= 1_{X}\, . 
$$
\end{thm}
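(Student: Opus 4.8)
\emph{Proof proposal.} The plan is to deduce this theorem directly from \cref{hopfconversion} (equivalently, from \cref{conversionfree}) by verifying its hypotheses for the pair $C\sd = K_R$ and $D\sd = \BB_\Ho$. The two structural prerequisites are already in place: the Koszul resolution $K_R$ of $R$ carries the Hopf action of $\Ho$ via \cref{HopfActionKoszulResolution}, so by \cref{compatibleHopfresolutions} it is compatible with the twisting map $\tau\colon H\ot R\to R\ot H$ through the map \cref{KoszulIsCompatible}; and the reduced bar resolution $\BB_\Ho$ is a left comodule bimodule resolution of $\Ho$ by \cref{BarIsComoduleRes}, hence also compatible with $\tau$. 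Thus $X\sd = K_R\ott\BB_\Ho$ is a twisted product resolution of $R\#\Ho$ of exactly the form to which \cref{hopfconversion} applies.

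First I would produce the injective compatible chain maps the corollary requires. On the Hopf side, take $\iota_{_\Ho} = 1_{\BB_\Ho}$; it is injective, a graded chain map, and trivially compatible with $\tau$. On the algebra side, take $\iota_{_R}\colon K_R\to\BB_R$ to be the canonical inclusion accompanying the Koszul resolution \cref{koszulres}, embedding $K_n = R\ot\widetilde K_n\ot R$ (see \cref{Koszulterms}) into $R\ot\bar R^{\ot n}\ot R$. It is injective, and it is a graded chain map by the very definition of the Koszul differential. It also intertwines the left $\Ho$-module structures, since the action on $K_n$ in \cref{HopfActionKoszulResolution} is the restriction of the action on $(\BB_R)_n$ from \cref{barcarriesHopfaction}; and because the compatibility maps $\tau_{_{K_R}}$ and $\tau_{_{\BB_R}}$ are both given by the same Sweedler formula $h\ot c\mapsto\sum{}^{h_1}c\ot h_2$ (see \cref{compatibleHopfresolutions} and \cref{barcarriesHopfaction}), any $\Ho$-equivariant chain map such as $\iota_{_R}$ is automatically compatible with $\tau$.

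Next I would check the spanning hypothesis of \cref{hopfconversion}. In homological degree $n$ one has $\widetilde K_n\subset V^{\ot n}\subset\bar R^{\ot n}$, so $\Ima(\iota_{_R})_n$ is precisely the sub-$R$-bimodule of $(\BB_R)_n$ generated by $1_R\ot\widetilde K_n\ot 1_R\subset k\ot\bar R^{\ot n}\ot k$; likewise $\Ima(\iota_{_\Ho})_n = (\BB_\Ho)_n$ is generated as an $\Ho$-bimodule by $1_\Ho\ot\bar\Ho^{\ot n}\ot 1_\Ho$. With all hypotheses in hand, \cref{hopfconversion} (resting on the bootstrap argument of \cref{bootstrap} and \cref{conversionspecial} and on the twisted Eilenberg--Zilber map of \cref{AW-EZforreducedbar}) produces chain maps of bimodule resolutions of $R\#\Ho$, namely $\iota = \EZ^{\tau}(\iota_{_R}\ot\iota_{_\Ho})\colon X\sd\to\BB_{R\#\Ho}$ together with a retraction $\pi\colon\BB_{R\#\Ho}\to X\sd$ satisfying $\pi\,\iota = 1_X$; since $R$ is graded, $\Ho$ is concentrated in degree $0$, and $\iota_{_R}$, $\iota_{_\Ho}$ are graded, both $\iota$ and $\pi$ come out graded. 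The only steps needing genuine care are the compatibility of $\iota_{_R}$ with $\tau$ and the spanning condition, but, as indicated above, both are immediate consequences of the explicit shape of the Koszul resolution and the $\Ho$-equivariance of the standard inclusion; the substantial work — constructing $\pi$ from the freeness of the quotient $\BB_{R\#\Ho}/\Ima\iota$ — has already been carried out in the earlier sections.
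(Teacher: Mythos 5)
Your proposal is correct and follows essentially the same route as the paper: the paper's proof also takes $\iota_{_R}$ to be the standard inclusion $K_R\hookrightarrow\BB_R$ and $\iota_{_H}$ the identity on $\BB_H$, observes that $\widetilde K_n$ lands in $k\ot\bar R^{\ot n}\ot k$, and invokes \cref{conversionfree} (of which \cref{hopfconversion} is just the Hopf-flavored restatement). Your extra remarks on $\Ho$-equivariance forcing compatibility with $\tau$ are a correct elaboration of details the paper leaves implicit.
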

\begin{proof}
 We use the standard 
 $R$-bimodule inclusion chain map $\iota_{_R}: K_R\rightarrow \BB_R$
 and the identity map $\iota_{_H}:\BB_H\rightarrow \BB_H$.
 In each degree $n$, $K_n=(K_R)_n$ is spanned as an $R$-bimodule
 by $R\ot \widetilde K_n\ot R$ (see \cref{Koszulterms})
 which is sent into $k\ot \bar R^{\ot n}\ot k$ by $\iota_{_R}$.
 \cref{conversionfree}
 then implies the result.
\end{proof}

\vspace{2ex}

%%%%%%%%%%%%%%%%%%%
\begin{remark}{\em
  We saw in  \cref{conversionfree} that it was not necessary to
insist upon existence of a projection map $\pi_{_R}:K_R\rightarrow \BB_R$
that is compatible with $\tau$ in order to find chain maps
$\pi: \BB_{R\#\Ho}   \rightarrow X\sd$ and 
$\iota: X\sd \longrightarrow  \BB_{R\#\Ho}$
with $\pi \,\iota$ the identity.
Instead 
\cref{bootstrap} allows us to develop results
in a more general setting.
We find in the next corollary
a chain map of bimodule resolutions of $R\# H$
  $$\pi_{_{R,H}}
  = \BB_R\ott\BB_\Ho \longrightarrow 
K_R \ott \BB_H  $$
  even when a projection map $\pi_{_R}:\BB_R\twoheadrightarrow  
  K_R$ compatible with twisting is not given.
  }
\end{remark} 

\vspace{2ex}

  %%%%%%%%%%%%%%%%%%%%
 \begin{cor}
   Let $X\sd$
be the twisted product resolution for 
$R\# \Ho$ that twists the Koszul resolution $K_R$ of $R$
with the reduced bar resolution 
$\BB_\Ho$ of the Hopf algebra $\Ho$. 
There is a graded chain map of bimodule resolutions of 
$R\# H$
  $$\pi_{_{R,H}}
  = \BB_R\ott\BB_\Ho 
\longrightarrow 
K_R \ott \BB_H  $$
 satisfying
$\pi_{_{R,H}}\ (\iota_R\ot 1_{\BB_H}) = 1$.
\end{cor}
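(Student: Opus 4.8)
The plan is to obtain $\pi_{_{R,H}}$ as a composite of two chain maps already constructed, rather than building anything new. Let $\EZt\colon \BB_R\ott\BB_\Ho\longrightarrow \BB_{R\ott \Ho}=\BB_{R\#\Ho}$ be the twisted Eilenberg--Zilber map of \cref{AW-EZforreducedbar}, taken with $S=\Ho$ and $\tau$ the twisting map recording the Hopf action, and let $\pi\colon \BB_{R\#\Ho}\longrightarrow X\sd=K_R\ott\BB_\Ho$ be the chain map of \cref{LastThm}. Both are chain maps of $(R\#\Ho)$-bimodule resolutions, so I would set
\[
\pi_{_{R,H}}\ :=\ \pi\circ\EZt\ \colon\ \BB_R\ott\BB_\Ho\ \longrightarrow\ K_R\ott\BB_\Ho\, ,
\]
which is then automatically a chain map of bimodule resolutions of $R\#\Ho$, and graded whenever both factors are.

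The next step is to verify $\pi_{_{R,H}}\,(\iota_R\ot 1_{\BB_\Ho})=1_{X}$, which comes down to unwinding how $\pi$ is produced in the proof of \cref{LastThm}. There, $\pi$ is obtained from \cref{conversionfree} (hence from \cref{conversionspecial} and the bootstrap \cref{bootstrap}) as a one-sided inverse of the injective chain map $\iota=\EZt(\iota_R\ot \iota_{\Ho})$, where $\iota_R\colon K_R\to\BB_R$ is the standard $R$-bimodule inclusion of the Koszul resolution and $\iota_{\Ho}=1_{\BB_\Ho}$; in particular $\pi\,\iota=1_X$. Since $\iota=\EZt(\iota_R\ot 1_{\BB_\Ho})$ by construction, we get
\[
\pi_{_{R,H}}\,(\iota_R\ot 1_{\BB_\Ho})\ =\ \pi\,\bigl(\EZt(\iota_R\ot 1_{\BB_\Ho})\bigr)\ =\ \pi\,\iota\ =\ 1_X\, .
\]

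For the graded statement I would note that $\pi$ is graded by \cref{LastThm} and $\EZt$ is graded because $\Ho$ sits in degree $0$: then $\tau$ is strongly graded and the iterated twisting $\varrho^{-1}$, the shuffle map $\theta$, and the projection/inclusion maps assembling $\EZt$ all preserve total degree, so the composite $\pi_{_{R,H}}$ is graded. I expect this gradedness check for $\EZt$ to be essentially the only bookkeeping; the rest is formal given \cref{LastThm} and \cref{AW-EZforreducedbar}. As an alternative that bypasses $\EZt$: by \cref{lem:alphaRalphaS} the map $\iota_R\ot 1_{\BB_\Ho}\colon K_R\ott\BB_\Ho\to\BB_R\ott\BB_\Ho$ is an injective chain map of bimodule resolutions of $R\#\Ho$ lifting the identity, and in each degree its cokernel is a free $(R\#\Ho)$-bimodule --- choosing for each $i$ a vector-space complement to $\widetilde K_i$ in $\bar R^{\ot i}$ and rerunning the freeness argument for $X_{i,j}$ given after \cref{eqn:xij} --- so \cref{bootstrap} directly yields a one-sided inverse $\pi_{_{R,H}}$, graded when $\iota_R$ is. On that route the main obstacle is confirming this cokernel is genuinely free, not merely projective, which again reduces to the $X_{i,j}$ computation.
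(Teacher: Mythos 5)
Your proposal is correct and is essentially identical to the paper's own proof: the paper also defines $\pi_{_{R,H}} = \pi\,\EZt$ with $\pi$ from \cref{LastThm} and concludes $\pi_{_{R,H}}(\iota_R\ot 1_{\BB_H}) = \pi\,\iota = 1$ using that $\iota = \EZt(\iota_R\ot 1_{\BB_H})$ by construction. The extra remarks on gradedness and the alternative route via \cref{bootstrap} are reasonable but not needed.
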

\begin{proof}
  We take the maps
  $\pi: \BB_{R\#\Ho}   \longrightarrow X\sd$
  and
  $\iota: X\sd \longrightarrow  \BB_{R\# H}$
as in \cref{LastThm}
and define $\pi_{_{R,H}}$ as the obvious composition,
  $\pi_{_{R,H}} = \pi \, \EZt$.
  Then
  $\pi_{_{R,H}}\, (\iota_R\ot 1_{\BB_H})$ is
  $\pi\, \EZt\ (\iota_R\ot 1_{\BB_H})
  = \pi\, \iota
  = 1,
  $ 
  the identity on $K_R\ott \BB_H$, by \cref{LastThm}.
\end{proof}

\vspace{2ex}

%%%%%%%%%%%%%%%%%%%%%%%%%%%%%%%%%%
\begin{example}\label{ChainMapsGroupActingOnPolyRing}
  {\em
Again consider $k[V]$, the polynomial ring on $V\cong k^n$ with a
linear action of a finite group $G$
as in 
\cref{GroupUnshuffle,GroupExampleAWEZmaps,SymmetricAlgebraGroup}
and the end of~\cref{sec:conversion}.
Note that we do not assume
the characteristic of the underlying field $k$ is coprime to $|G|$.  
Let $X\sd=K\sd\ott \BB_{kG}$
be the twisted product resolution 
that twists the (bimodule) Koszul resolution $K\sd$ of $k[V]$
with the reduced bar resolution
$\BB_{kG}$ of the group algebra $kG$.
We set $\iota_{k[V]}:K\sd\rightarrow \BB_{k[V]}$ to be the standard inclusion
chain map
and $\iota_{kG}$ to be the identity map on $\BB_{kG}$.
By \cref{LastThm}, there are
graded chain maps of bimodule resolutions of
the smash product $k[V]\# G$ 
$$
\pi: \BB_{k[V]\# G}   \longrightarrow X\sd 
\quad\text{and}\quad 
\iota: X\sd \longrightarrow  \BB_{k[V]\#G}
$$
with composition the identity:
$
\pi \, \iota %: (C \ot ^\tau D)_n \rightarrow (C \ot ^\tau D)_n 
= 1_{X}\, . 
$
These chain maps arise from the twisted Alexander-Whitney and
Eilenberg-Zilber maps in a natural way, in contrast to constructions
in the literature that rely on some ad hoc arguments for finite group actions;
for example, compare
with~\cite{SW-quantum}
(see (2.6), (4.2), and the Appendix).
Also see~\cite[Lemma~4.7]{quad}. 
We caution that 
the field has characteristic~0 in~\cite{SW-quantum}
although for the definitions of the given maps, that is not necessary. 
}\end{example}

\vspace{2ex}

%%%%%%%%%%%%%%%%%%%%
%%%%%%%%%%%%%%%%%%%%%%%%%%%%%%%%%%%%%%%%%%
%%%%%%%%%%%%%%%%%%%%%%%%%%%%%%%%%%%%%%%%%%
\section{Acknowledgments}
We thank Marcelo Aguiar
for engaging conversations on some ideas presented here.
The first author was partially supported by Simons grants 429539 and 949953.
The second author was partially supported by NSF grant 2001163.
 This material is based upon work supported by the National Science Foundation 
under Grant No.~DMS-1928930 and by the Alfred P.~Sloan Foundation under grant 
G-2021-16778, while the second author was in residence at the Simons Laufer 
Mathematical Sciences Institute (formerly MSRI) in Berkeley, California, 
during the Spring 2024 semester.

%%%%%%%%%%%%%%%%%%%%%

\end{document}